\documentclass[11pt]{article}

\usepackage[margin=1.1in]{geometry}
\usepackage{amsmath, amssymb, amsthm}
\usepackage{graphicx}
\usepackage{booktabs}
\usepackage[hidelinks]{hyperref}

\newtheorem{theorem}{Theorem}
\newtheorem{proposition}[theorem]{Proposition}
\newtheorem{corollary}[theorem]{Corollary}
\newtheorem{lemma}[theorem]{Lemma}
\newtheorem{definition}[theorem]{Definition}
\newtheorem{assumption}[theorem]{Assumption}
\theoremstyle{remark}
\newtheorem{remark}[theorem]{Remark}

\newcommand{\R}{\mathbb{R}}
\newcommand{\E}{\mathbb{E}}
\newcommand{\Prob}{\mathbb{P}}
\newcommand{\Q}{\mathbb{Q}}
\newcommand{\sdsvfigure}[1]{\includegraphics[width=0.95\textwidth]{#1}}

\title{Sentiment-Driven Stochastic Volatility: An Observable\\
Second Factor from Market Attention}
\author{Sergey Patsuk$^{1}$ \and Derek Horstmeyer$^{2}$\\[4pt]
\small $^{1}$Department of Mathematics, George Mason University, email: spatsuk@gmu.edu\\
\small $^{2}$Costello College of Business, George Mason University, email: dhorstme@gmu.edu}
\date{July 2026}

\begin{document}
\maketitle
\begin{abstract}
We introduce a two-factor stochastic volatility model in which the
mean-reversion speed of a Cox--Ingersoll--Ross variance process is an
observable functional of a market attention field rather than a latent
factor. The field admits an exact Feynman--Kac representation and generates
closed-form volatility forecasts. The model nests the Heston volatility model, provides
measurement-stable dynamics under observable data aggregation, and preserves
its structure under an equivalent martingale measure. A mean-field analysis
establishes convergence of the underlying particle system, while simulations
demonstrate volatility and option-pricing differences following attention
shocks.
\end{abstract}
\noindent\textbf{MSC 2020:} 91G20, 60H30, 35K55, 60K35\\
\textbf{JEL Classification:} G12, G13, C63\\
\textbf{Keywords:} stochastic volatility; Sparrow Search; nonlinear
reaction--diffusion; mean-field limit; two-factor model

\section*{Introduction}
Financial derivative valuation has long relied on stochastic models to capture
market uncertainty. The Black--Scholes--Merton framework (1973) provided the
first closed-form solution for European options using observable inputs except
volatility, whose constant assumption has motivated extensions such as Heston's
stochastic volatility model (1993), Kou's jump-diffusion model (2002),
Heston--Kou hybrids, and evolutionary calibration approaches. We introduce a
two-factor stochastic volatility model in which the second factor --- the
variance mean-reversion speed --- is directly observable through a real-time
market attention field rather than inferred from option prices. This field is
generated from an interacting particle system with exploratory mutation,
log-gradient herding, contrarian opposition, and recency decay, yielding a
mean-field limit with convergence rate $O(N^{-1/2}\sqrt{\log N})$ in
Wasserstein-1 distance, the sharp two-dimensional matching rate. The resulting
attention dynamics admit an exact Feynman--Kac representation: a power-law
transformation linearizes the governing nonlinear equation, producing a
strictly positive, bounded, Gaussian-decaying solution with an explicit
mass-decay law.

The model provides a rigorous measurement interface through a scale-invariant
item-to-position mapping, with stability guarantees under Wasserstein
perturbations, thinning, censoring, and noise. It nests the constant-$\kappa$
Heston model exactly and yields closed-form expressions for conditional
variance, integrated variance, and drawdown measures using only current
information. Under an explicit equivalent martingale measure, the affine
structure is preserved, key quantities such as $\kappa_t\bar\theta$ and the
Feller condition remain invariant, and the second-factor path is identical
under physical and risk-neutral measures in the baseline specification. The
swarm, meanwhile, supplies both the microbehavioural foundation for the field
equation and the mean-field aggregation mechanism that generates it. The
framework remains modular, allowing alternative aggregation mechanisms such as
order-flow clustering, news-based topic models, or graph diffusion processes to
inherit the same analytical structure under suitable conditions.

\subsection*{The Nature and Challenge of Volatility Estimation}

Volatility, a measure of return
dispersion, is the primary proxy for investment risk in option pricing. In the
Black--Scholes model, volatility is the one critical parameter that cannot be
directly observed. The field distinguishes historical volatility, which is
backward-looking, from implied volatility (IV), which is
forward-looking, extracted from current option prices; the CBOE VIX is
the canonical market-wide gauge. Empirical analysis of financial markets
reveals certain phenomena that a volatility model must address:
volatility clustering, mean reversion, and the leverage effect. Recent
rough-volatility approaches (Horvath et al., 2024) posit fractal-like,
irregular volatility paths at all scales, capturing observed persistence
with long-range dependence and yielding more accurate option pricing.
Our approach takes a complementary route: rather than improving the variance
process itself, we treat a parameter that is usually held constant in the
Heston model, the mean-reversion speed, as an explicit functional of an
observable sentiment field, retaining a tractable CIR variance structure while
making the second factor analytical and observable.

\subsection*{The Evolution of Stochastic Volatility Models}

Stochastic models are integral to derivative pricing. The Heston model
(Heston, 1993) treats volatility itself as a mean-reverting square-root
random process, capturing the volatility smile and skew that
Black--Scholes cannot. Subsequent research has enhanced this framework:
El-Khatib et al.\ (2024) integrate jump-diffusion processes into the
Heston framework to capture sudden price shocks; Guo et al.\ (2023) use
fractional Brownian motion to model long-range dependence in financial
time series; Wang and He (2016) integrate fuzzy logic to manage market
ambiguity. These extensions enrich the variance dynamics but leave the
mean-reversion rate either fixed, as in standard Heston, or treated as a
hidden latent factor; the present work instead makes this second factor
an explicit, observable functional of sentiment, while keeping it
analytically tractable.

\subsection*{Computational Intelligence for Financial Optimization}

Complex stochastic models present parameterization challenges in
high-dimensional, non-convex spaces. Swarm intelligence offers
heuristic methods for exactly this setting: Jha et al.\ (2009) apply Particle Swarm Optimization
(PSO) to option-pricing parameter estimation. Recent literature uses
swarm algorithms within larger hybrid systems for dynamic modeling:
Zhang and Ding (2021) combine a chaotic Sparrow Search Algorithm with a
stochastic configuration network; Hu et al.\ (2022) integrate PSO with
mixed fractional Brownian motion. Our approach uses an Enhanced Sparrow
Search Algorithm to model the entire sentiment landscape as a
continuously evolving system, rather than to search for a single best
item. Swarm intelligence is well-suited to processing the
high-volume real-time data streams of social media (Liu et al., 2022),
and emergent collective patterns contain valuable predictive information
(Schumann et al., 2019). The swarm here is the model itself, rather
than a tuning device for a separate one: the empirical measure of the
particle system converges at the $O(N^{-1/2}\sqrt{\log N})$
Wasserstein-1 rate to the closed-form attention field from which the
sentiment field, and hence the volatility dynamics, are read off.

The nearest mathematical relatives of our regularised dynamics are
consensus-based optimization (CBO) methods (Pinnau et al., 2017;
Carrillo et al., 2018), in which interacting particles are drawn toward
a Gibbs-weighted consensus point and mean-field limits lead to
deterministic dynamics. The soft-best attractor \eqref{eq:soft} of
Appendix~\ref{app:derivation} is precisely such a consensus point. What
the present setting adds is role heterogeneity --- producers,
scroungers, and contrarians, rather than a single exchangeable
population --- together with position-dependent Feynman--Kac dissipation
and a convergence rate that is sharp in dimension two. To our knowledge,
Theorem~\ref{thm:poc-esso} is the first propagation-of-chaos guarantee
for a sparrow-search-type dynamics.

\section{The Attention Field: Measurement and Microscopic Dynamics}
\label{sec:swarm}

The model consumes data through a two-layer construction. The first
layer is a \emph{measurement map} (Section~\ref{sec:measurement})
converting observed information items into positions in the (virality,
recency) plane; the map is part of the model, and the
volatility-relevant functionals of Section~\ref{sec:kappa} are proved
stable under the measurement imperfections any implementation will face.
The second layer is a \emph{particle dynamics} on the resulting point
cloud: individual sparrows move genuinely stochastically (Brownian
exploration, random killing times), but the object the model's
closed-form formulas actually use is not any single particle's path,
nor even the finite-$N$ empirical measure of the swarm, itself a random
object --- it is the deterministic large-$N$ limit of that empirical
measure, in the precise sense made rigorous by
Theorem~\ref{thm:poc} and Corollary~\ref{cor:functional-poc}. This is
an ordinary law-of-large-numbers phenomenon: averaging over the
randomness of many particles produces a non-random limit, exactly as a
sample mean of i.i.d.\ random variables converges to a deterministic
constant. The object is the diffusion-with-killing system of
Appendix~\ref{app:derivation}, whose mean-field limit is the closed-form
field of Section~\ref{sec:pde}; the Enhanced Sparrow Search(ESSO) dynamics of
Section~\ref{sec:esso-dynamics} is a finite-$N$ instantiation that
enriches this skeleton with heterogeneous agent roles, and its
regularised form falls within the propagation-of-chaos class of
Theorem~\ref{thm:poc-esso}. ESSO modifies the standard Sparrow Search
Algorithm (Xue and Shen, 2020) with several enhancements; for surveys of
SSA variants and improvements see Awadallah et al.\ (2023) and the
learning-based variant of Ouyang et al.\ (2021).

\subsection{Measurement: From Information Streams to the Swarm}
\label{sec:measurement}

\begin{definition}[Information stream]\label{def:stream}
An information stream over the horizon $[0,T]$ is a collection
$\mathcal{I} = \{(s_i, n_i(\cdot))\}_i$ of items, where $s_i \in [0,T]$
is the item's arrival time and $n_i(t) > 0$ is its cumulative attention
count such as views, mentions, search intensity, or article volume, observed at
time $t \ge s_i$. Items may arrive at any time; no distributional
assumption is placed on $\mathcal{I}$.
\end{definition}

\begin{definition}[Item-to-sparrow map]\label{def:mapping}
Fix a memory scale $\tau > 0$, a recency range $R > 0$, a rolling window
length $\Lambda > 0$, and a truncation ball $B_R \subset \R^2$ centred
at the origin. At time $t$, let $\mathcal{I}_t = \{i : s_i \in
[t-\Lambda, t]\}$ be the active items, and let $m_t$ and $s_t$ denote
the cross-sectional mean and standard deviation of
$\{\log n_j(t)\}_{j \in \mathcal{I}_t}$, with $s_t \ge s_{\min} > 0$
assumed non-degenerate. Item $i \in \mathcal{I}_t$ is mapped to the
sparrow position $P_i(t) = (v_i(t), r_i(t)) \in B_R$ with
\begin{equation}\label{eq:mapping}
  v_i(t) \;=\; \frac{\log n_i(t) - m_t}{s_t},
  \qquad
  r_i(t) \;=\; R\bigl(2\, e^{-(t - s_i)/\tau} - 1\bigr),
\end{equation}
truncated componentwise to $B_R$. The \emph{swarm} at time $t$ is the
point cloud $\{P_i(t)\}_{i \in \mathcal{I}_t}$; when an ESSO update is
run, the swarm positions $\{X_j(t)\}$ of
Section~\ref{sec:esso-dynamics} are seeded at these mapped positions.
The sentiment field is initialised from the swarm through the
Mehler-kernel datum \eqref{eq:datum} and evolved by the closed form
(CS), exactly as in Section~\ref{sec:kde}.
\end{definition}

\emph{Virality} is the cross-sectionally standardised log attention: the
standardisation delivers the exact platform-scale invariance of
Proposition~\ref{prop:scale-inv} below, and the logarithm compresses
heavy-tailed engagement counts so that super-linear
attention-to-sentiment amplification enters the model where it belongs,
through the power-law readout $T = [(1-c)u]^{1/(1-c)}$ of
Appendix~\ref{app:derivation}, not through the coordinates.
\emph{Recency} decays exponentially at the memory scale $\tau$ and is
bounded in $[-R, R]$: fresh items sit near $+R$, items of age $\tau \log
2$ cross the origin, and stale items saturate near $-R$, so the
quadratic radial sink $W(x) = \alpha + \beta|x|^2$ dissipates exactly
the stale and the hyper-viral extremes, matching its role in
Section~\ref{sec:pde-derivation}. \emph{Truncation} to $B_R$ enforces
the uniform second-moment bound \eqref{eq:moment} at the level of the
data itself and is the compactness used in the stability results below.

The functionals $(\lambda_t, \theta_t, \eta_t)$ of
Section~\ref{sec:kappa} depend on the stream only through the swarm
geometry, and by their mass-normalised construction
\eqref{eq:lam-func}-\eqref{eq:eta-func} they are invariant to the
amplitude of the field and to the number of items. The next results show
that the geometry itself is measured stably: the volatility dynamics are
Lipschitz in the measured attention configuration.

\begin{proposition}[Platform-scale invariance]\label{prop:scale-inv}
Fix $t$ and rescale every count by a common factor, $n_i(t) \mapsto
a\,n_i(t)$ with $a > 0$. Then the sparrow positions of
Definition~\ref{def:mapping}, hence the field $T(\cdot,t)$, the
functionals $(\lambda_t, \theta_t, \eta_t)$, and the mean-reversion
speed $\kappa_t$, are unchanged. The model is invariant to the overall
size of the platform or panel from which attention is measured.
\end{proposition}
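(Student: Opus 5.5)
The plan is to show that the rescaling $n_i(t)\mapsto a\,n_i(t)$ leaves every sparrow position $P_i(t)$ exactly unchanged. Everything downstream in the statement is built from those positions by deterministic maps that never look at the raw counts, so invariance will then pass through each layer with no estimates needed.

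\emph{Step 1 (coordinates).} Under the rescaling, $\log(a\,n_j(t)) = \log n_j(t) + \log a$ for every active item $j\in\mathcal{I}_t$. The active set $\mathcal{I}_t$ is unchanged, because it depends only on the arrival times $s_i$ and the window $\Lambda$. The cross-sectional mean therefore shifts, $m_t\mapsto m_t+\log a$. The standard deviation $s_t$ is translation invariant, so it is unchanged, and the non-degeneracy assumption $s_t\ge s_{\min}$ continues to hold. Substituting into \eqref{eq:mapping} gives
\[
\frac{\log n_i(t)+\log a-(m_t+\log a)}{s_t}=v_i(t).
\]
The recency coordinate $r_i(t)$ involves only $t$, $s_i$, $\tau$ and $R$, so it does not change either. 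Componentwise truncation to $B_R$ is a fixed map applied to identical inputs. Hence the swarm $\{P_i(t)\}_{i\in\mathcal{I}_t}$ is identical as a point cloud, including its cardinality.

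\emph{Step 2 (propagation).} By Definition~\ref{def:mapping}, any ESSO update is seeded at these positions. The field is initialised from the swarm through the Mehler-kernel datum \eqref{eq:datum} and evolved by the closed form (CS). Neither step takes the counts $n_i$ as input. Consequently $T(\cdot,t)$ is the same function, and for ESSO runs the law of the dynamics is the same, since the seeds are the same. The functionals $(\lambda_t,\theta_t,\eta_t)$ of \eqref{eq:lam-func}--\eqref{eq:eta-func} are functionals of the field alone, so they are unchanged. Finally, $\kappa_t$ is a fixed function of these functionals (as specified in Section~\ref{sec:kappa}), so it is unchanged too.

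\emph{Main obstacle.} The only delicate point is confirming that no later stage reintroduces the raw counts: for example, a mass normalisation weighted by $n_i$, or a datum amplitude proportional to total attention. I would check \eqref{eq:datum} and the functional definitions for any such dependence. If the datum turned out to carry an amplitude factor, I would invoke the already-noted mass-normalised construction of \eqref{eq:lam-func}--\eqref{eq:eta-func}, which makes the functionals invariant to field amplitude, so $\kappa_t$ is still unaffected. In that case only the phrase ``the field $T$ is unchanged'' would need to be read up to the positions-determined construction.
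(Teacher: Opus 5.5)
Your proposal is correct and follows essentially the same route as the paper's proof: the shift $m_t \mapsto m_t + \log a$ with $s_t$ unchanged leaves each $v_i$ fixed, $r_i$ never involves the counts, and everything downstream is a deterministic function of the positions. Your extra check of the datum \eqref{eq:datum} makes explicit what the paper leaves implicit; its amplitude $e^{-w/2}/N$ involves no counts. One further note: $\kappa_t$ in \eqref{eq:kappa-voc} depends on the whole path $(\lambda_s,\theta_s)_{s\le t}$, so the rescaling should be read as applied at every time, and your argument handles that case verbatim.
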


\begin{proof}
$\log(a\,n_i) = \log a + \log n_i$ shifts $m_t$ by $\log a$ and leaves
$s_t$ unchanged, so each $v_i$ is unchanged; $r_i$ does not involve
counts. The remainder of the pipeline is a deterministic function of the
positions.
\end{proof}

\begin{proposition}[Wasserstein stability of the functionals]
\label{prop:w1-stability}
Fix $t \ge 0$ and let $\hat\mu, \hat\nu$ be two swarm configurations
supported in $B_R$, smoothed at effective time $t_{\mathrm{eff}} := t +
h_0^2/2 > 0$ through the estimator \eqref{eq:estimator}. Let
$(\lambda,\theta,\eta)[\cdot]$ denote the functionals
\eqref{eq:lam-func}--\eqref{eq:eta-func} computed from the resulting
field. Then there is a constant $C = C(\omega, t_{\mathrm{eff}}, c, R,
r_0) < \infty$ such that
\[
  \bigl|\lambda[\hat\mu] - \lambda[\hat\nu]\bigr|
  + \bigl|\theta[\hat\mu] - \theta[\hat\nu]\bigr|
  + \bigl|\eta[\hat\mu] - \eta[\hat\nu]\bigr|
  \;\le\; C\, W_1(\hat\mu, \hat\nu),
\]
with $W_1$ computed between the normalised empirical measures. The
constant does not depend on the amplitude $A(t) = e^{-(1-c)\alpha t}$,
the functionals being mass-normalised; it is finite for every
$t_{\mathrm{eff}} > 0$ and uniform for $t$ in compact subsets of
$[0,\infty)$, the value $t = 0$ included because $t_{\mathrm{eff}} \ge
h_0^2/2 > 0$. It degrades like $\exp\bigl(2R^2/((1-c)\,
t_{\mathrm{eff}})\bigr)$ as $t_{\mathrm{eff}} \downarrow 0$, that is,
as the measurement granularity $h_0$ vanishes.
\end{proposition}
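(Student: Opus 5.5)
The plan is to factor the map $\hat\mu \mapsto (\lambda,\theta,\eta)[\hat\mu]$ into three stages and prove a Lipschitz estimate for each:
\begin{enumerate}
\item the smoothing stage $\hat\mu \mapsto u(\cdot,t_{\mathrm{eff}})$ of \eqref{eq:estimator}, a Mehler/Gaussian-kernel superposition $u(x) = \int K_{t_{\mathrm{eff}}}(x,y)\,\hat\mu(dy)$;
\item the power-law readout $u \mapsto T = [(1-c)u]^{1/(1-c)}$;
\item the mass-normalised functionals \eqref{eq:lam-func}--\eqref{eq:eta-func}, which are ratios $\int g\,T\,dx \big/ \int T\,dx$ with $g$ of at most quadratic growth.
\end{enumerate}
Composing the three Lipschitz bounds gives the claim. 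Throughout, the amplitude $A(t)$ factors out of $T$ and cancels between numerator and denominator in each ratio. So I will work with the unit-amplitude field from the start, which makes the constant independent of $A(t)$.

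\textbf{Stage 1.} By Kantorovich--Rubinstein duality, for each fixed $x$,
\[
|u_{\hat\mu}(x) - u_{\hat\nu}(x)| \le \mathrm{Lip}_y\bigl(K_{t_{\mathrm{eff}}}(x,\cdot)\bigr)\, W_1(\hat\mu,\hat\nu).
\]
The kernel is Gaussian in $x - e^{-\omega t}y$ with variance of order $t_{\mathrm{eff}}$. Its $y$-gradient is therefore bounded by $C\,t_{\mathrm{eff}}^{-1/2}(1+|x|)\,G(x)$, where $G$ is a Gaussian envelope that is uniform over $|y|\le R$. This gives a pointwise bound with Gaussian decay in $x$, and the decay is what makes the later integrals against quadratic weights finite.

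\textbf{Stage 2.} The readout is where the obstacle lies. The map $u\mapsto u^{1/(1-c)}$ has derivative $\propto u^{c/(1-c)}$, and when $c<0$ this exponent is negative, so the derivative blows up as $u\downarrow 0$. The remedy is to compare the two fields against a common Gaussian envelope instead of estimating in absolute terms. Since both measures are supported in $B_R$, each $u$ is bounded below by $\min_{|y|\le R} K(x,y)$ and above by $\max_{|y|\le R}K(x,y)$. The ratio of these extremes is at most $\exp\bigl(c_1 R|x|/t_{\mathrm{eff}} + c_2 R^2/t_{\mathrm{eff}}\bigr)$. The mean value theorem then bounds $|T_{\hat\mu}-T_{\hat\nu}|$ by the Stage 1 difference times $\sup (\text{envelope})^{c/(1-c)}$. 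After integrating against $(1+|x|^2)$, this produces the factor $\exp\bigl(2R^2/((1-c)t_{\mathrm{eff}})\bigr)$ claimed in the statement. The first thing I would try is to complete the square in the exponent so that the cross term $R|x|/t_{\mathrm{eff}}$ is absorbed into the Gaussian decay, leaving exactly the $R^2/t_{\mathrm{eff}}$ penalty with coefficient $2/(1-c)$. I expect this bookkeeping of exponents to be the hardest step.

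\textbf{Stage 3.} For a ratio $N/D$ we use
\[
|N_1/D_1 - N_2/D_2| \le |N_1-N_2|/D_1 + |N_2|\,|D_1-D_2|/(D_1D_2).
\]
The lower bound on $D$ comes from the same envelope minorant, and the upper bound on $N$ from the majorant; both are uniform over measures supported in $B_R$. For $\eta$, if it is a variance-type or nested-ratio functional, I apply the same estimate twice. All resulting constants depend continuously on $t$ through $t_{\mathrm{eff}}\ge h_0^2/2$ and on $\omega$, which gives uniformity on compact time sets, including $t=0$. The dependence on $r_0$ enters only through the weights $g$ in the definition of the functionals.
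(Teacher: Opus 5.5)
Your three-stage skeleton (Kantorovich--Rubinstein duality against a Gaussian Lipschitz envelope of the Mehler kernel, transfer through the readout, then a quotient rule) is the same as the paper's. Your remark that $\phi$ is a pure power is also good: $T$ then carries the factor $A(t)^{1/(1-c)}$, and the mass-normalised functionals are exactly invariant under it, which disposes of the amplitude more cleanly than the paper's Step~6.

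\emph{The $\lambda$ functional is not covered.} By \eqref{eq:lam-func}, $\lambda = \lambda_1\int T\,dx\big/\int|x|\,T\,dx$ is the \emph{reciprocal} of $\langle|x|\rangle_T$, not a ratio $\int gT/\int T$. By contrast, $\eta = \eta_1\langle|x|^2\rangle_T$ is a plain ratio, so no nesting is needed there. Your Stage~3 supplies only a lower bound on $\int T$ and upper bounds on numerators. That does not make $1/\langle|x|\rangle_T$ Lipschitz. You need a uniform bound $\langle|x|\rangle_T \ge m^\ast > 0$ over all configurations in $B_R$, and hence a lower bound on $\int|x|\,T\,dx$. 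The paper gets this from $T \ge \phi(A\delta_K)$ on the annulus $B_R\setminus B_{R/2}$, where $|x|\ge R/2$, together with the majorant bound on $\int T$. It then applies $|1/a-1/b|\le|a-b|/m^2$. Your minorant gives this immediately, but as written the step is missing.

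\emph{The difficulty, and the small-$t_{\mathrm{eff}}$ constant, are placed in the wrong stage.} Here $c\in(0,1)$, so the $c<0$ scenario you describe does not arise. Since $c/(1-c)>0$, $\phi'(z)=[(1-c)z]^{c/(1-c)}$ is increasing with $\phi'(0)=0$. Hence $\phi$ is Lipschitz on $[0,M_u]$ with constant $\phi'(M_u)$, where $M_u=A\bar K$ and $\bar K=\omega/(4\pi\sinh(\omega t_{\mathrm{eff}}))$. This uses only the upper envelope; no minorant enters Stage~2. The resulting constant is polynomial in $1/t_{\mathrm{eff}}$, so no completing of squares in Stage~2 can produce $\exp\bigl(2R^2/((1-c)t_{\mathrm{eff}})\bigr)$.

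That factor arises in Stage~3, from dividing by the square of the denominator's lower bound. In the worst case $x=-y$, $|x|=|y|=R$, one has $\delta_K \ge \bar K\exp\bigl(-\omega R^2(\cosh(\omega t_{\mathrm{eff}})+1)/(2\sinh(\omega t_{\mathrm{eff}}))\bigr)\approx \bar K e^{-R^2/t_{\mathrm{eff}}}$. The bound $T\ge\phi(A\delta_K)$ on $B_R$ then gives $\delta_K^{-2/(1-c)}\approx\exp\bigl(2R^2/((1-c)t_{\mathrm{eff}})\bigr)$ up to polynomial factors. The coefficient $2$ is an artifact of that worst case. Integrating your pointwise minorant over all of $\R^2$ would give a smaller exponent, which is harmless since only an upper bound on $C$ is at stake, so you need not aim to hit $2/(1-c)$ exactly.

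A minor point: as a function of $x$, the Mehler kernel is centred at $y/\cosh(\omega t_{\mathrm{eff}})$, not at $e^{-\omega t}y$.
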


\begin{proof}
Throughout $\hat\mu, \hat\nu$ are probability measures on $B_R$, and
\[
  u[\hat\mu](x) := A(t)\int_{B_R} K_{t_{\mathrm{eff}}}(x,y;\omega)\,
  \hat\mu(dy), \qquad T[\hat\mu] := \phi\bigl(u[\hat\mu]\bigr), \qquad
  \phi(z) := \bigl[(1-c)z\bigr]^{1/(1-c)} .
\]
Write $S := \sinh(\omega t_{\mathrm{eff}})$, $\Lambda :=
\cosh(\omega t_{\mathrm{eff}})$, $\vartheta := \tfrac{\omega}{4}
\tanh(\omega t_{\mathrm{eff}}) > 0$ and $\bar K := \omega/(4\pi S)$.

\emph{Proof strategy.} The argument has three stages. Steps~1--2 use
the explicit Mehler kernel to show the field $u[\hat\mu]$ is bounded
above by a fixed Gaussian envelope, bounded below on $B_R$, and moves by
a $W_1$-Lipschitz amount when the configuration changes; the kernel's
formula is not needed again after this. Step~3 pushes this
through the nonlinear power-law readout $T = \phi(u)$, using only the
upper bound from Step~2: $\phi$ is Lipschitz on bounded intervals
\emph{from above}, and is automatically well-behaved near $0$, so the
lower bound plays no role until Step~4. Steps~4--6 turn this single
field-level Lipschitz bound into bounds on the three functionals
$\lambda, \theta, \eta$, each of which is a \emph{ratio} of integrals of
$T$ against one of four fixed weights ($1$, $|x|$,
$\mathbf{1}_{\{|x|\le r_0\}}$, or $|x|^2$): $\theta$ and $\eta$ are each
of the form $\mathrm{constant} \times (\mathrm{numerator})/
(\mathrm{denominator})$, while $\lambda$ is the same ratio
\emph{inverted}, $\mathrm{constant}/(\mathrm{ratio})$ --- both reduce to
one elementary quotient bound (Step~5). Step~6 confirms that the
coupling amplitude $A(t)$, which multiplies every quantity along the
way, cancels out of the final constant entirely, matching the
mass-normalisation already built into $\lambda, \theta, \eta$.

\emph{Step 1 (kernel bounds on $\R^2 \times B_R$).} The first two bounds
are uniform in $x$ over the \emph{whole} plane, which is what the
functionals require; only the third is restricted to $B_R$.

(a) \emph{Gaussian envelope.} The inequality $(|x|^2+|y|^2)\Lambda - 2\,
x\cdot y \ge |x|^2 S^2/\Lambda$ established in the proof of
Theorem~\ref{thm:field}(iv) gives, for all $x, y \in \R^2$,
\[
  K_{t_{\mathrm{eff}}}(x,y;\omega) \;\le\; \bar K\, e^{-\vartheta|x|^2}
  \;\le\; \bar K .
\]

(b) \emph{Lipschitz envelope in $y$.} Differentiating \eqref{eq:kernel},
$\nabla_y K_{t_{\mathrm{eff}}}(x,y;\omega) = -\tfrac{\omega}{2S}
(\Lambda y - x)\, K_{t_{\mathrm{eff}}}(x,y;\omega)$, so by (a) and
$|\Lambda y - x| \le \Lambda R + |x|$ for $y \in B_R$,
\[
  \sup_{y \in B_R}\bigl|\nabla_y K_{t_{\mathrm{eff}}}(x,y;\omega)\bigr|
  \;\le\; g(x) \;:=\; \frac{\omega \bar K}{2S}\bigl(\Lambda R +
  |x|\bigr)\, e^{-\vartheta|x|^2}, \qquad x \in \R^2 .
\]
The envelope $g$ is Gaussian-decaying, so $G_h := \int_{\R^2} h\, g\,
dx < \infty$ for every weight $h \in \{1,\ |x|,\ \mathbf 1_{\{|x| \le
r_0\}},\ |x|^2\}$ occurring in
\eqref{eq:lam-func}--\eqref{eq:eta-func}. This is what dispenses with
any auxiliary truncation radius: the tail is carried by $g$ itself, and
the estimate never leaves $\R^2$.

(c) \emph{Lower bound on $B_R \times B_R$.} By continuity and strict
positivity of the Mehler kernel on the compact set $B_R \times B_R$,
$\delta_K := \inf_{x,y \in B_R} K_{t_{\mathrm{eff}}}(x,y;\omega) > 0$;
maximising the exponent of \eqref{eq:kernel} at $x = -y$ with $|x| =
|y| = R$ gives the explicit bound $\delta_K \ge \bar K
\exp\bigl(-\omega R^2(\Lambda + 1)/(2S)\bigr)$.

\emph{Step 2 (field bounds).} Since $\hat\mu$ is a probability measure
on $B_R$, (a) and (c) give, for every configuration,
\[
  u[\hat\mu](x) \;\le\; \Psi(x) := M_u\, e^{-\vartheta|x|^2} \quad (x
  \in \R^2), \qquad u[\hat\mu](x) \;\ge\; A(t)\,\delta_K \quad (x \in
  B_R),
\]
with $M_u := A(t)\bar K$. For the increment, fix $x$; by (b) the map $y
\mapsto K_{t_{\mathrm{eff}}}(x,y;\omega)$ is $g(x)$-Lipschitz on $B_R$,
and McShane's theorem extends it to $\R^2$ with the same constant, so
Kantorovich--Rubinstein duality applies to the pair $\hat\mu, \hat\nu$:
\begin{equation}\label{eq:field-lip}
  \bigl|u[\hat\mu](x) - u[\hat\nu](x)\bigr| \;\le\; A(t)\, g(x)\,
  W_1(\hat\mu, \hat\nu), \qquad x \in \R^2 .
\end{equation}

\emph{Step 3 (readout transfer).} Since $c \in (0,1)$ the exponent
$c/(1-c)$ is positive, so $\phi'(z) = [(1-c)z]^{c/(1-c)}$ is
\emph{increasing} on $[0,\infty)$ with $\phi'(0) = 0$. Hence $\phi$ is
Lipschitz on the bounded interval $[0, M_u]$ with constant
\[
  L_\phi := \phi'(M_u) = \bigl[(1-c)M_u\bigr]^{c/(1-c)} ,
\]
and no lower bound on the field enters this step: the readout must be
controlled from \emph{above}, not away from zero, because $\phi'$ is
unbounded at infinity and bounded near the origin. By Step~2 both fields
take values in $[0, M_u]$, so \eqref{eq:field-lip} transfers:
\begin{equation}\label{eq:readout-lip}
  \bigl|T[\hat\mu](x) - T[\hat\nu](x)\bigr| \;\le\; L_\phi\, A(t)\,
  g(x)\, W_1(\hat\mu, \hat\nu), \qquad x \in \R^2 .
\end{equation}
Monotonicity of $\phi$ also gives $T[\hat\mu] \le \phi \circ \Psi$ on
$\R^2$, Gaussian with rate $\vartheta/(1-c) = \omega\tanh(\omega
t_{\mathrm{eff}})/(4(1-c))$ in agreement with
Theorem~\ref{thm:field}(iv), and $T[\hat\mu] \ge \underline T :=
\phi\bigl(A(t)\delta_K\bigr) > 0$ on $B_R$.

\emph{Step 4 (weighted integrals).} For $h \in \{1,\ |x|,\ \mathbf
1_{\{|x| \le r_0\}},\ |x|^2\}$ put $I_h[\hat\mu] := \int_{\R^2}
T[\hat\mu]\, h\, dx$, finite by the Gaussian bound of Step~3.
Integrating \eqref{eq:readout-lip} against $h$,
\begin{equation}\label{eq:Ih-lip}
  \bigl|I_h[\hat\mu] - I_h[\hat\nu]\bigr| \;\le\; L_\phi\, A(t)\, G_h\,
  W_1(\hat\mu, \hat\nu) \;\le\; L_\phi\, A(t)\, G_{\max}\,
  W_1(\hat\mu, \hat\nu),
\end{equation}
where $G_{\max} := G_1 + G_{|x|} + G_{|x|^2} < \infty$ bounds all four
weights at once (the indicator weight needs no separate term, since
$\mathbf 1_{\{|x|\le r_0\}} \le 1$ gives $G_{\mathbf 1_{\{|x|\le
r_0\}}}\le G_1$): one constant now carries every weight, in place of
four.

The rest of this step collects, uniformly over configurations, upper
and lower bounds on the integrals themselves. Using $T \ge \underline T$
on $B_R$ and $|x| \ge R/2$ on $B_R \setminus B_{R/2}$,
\[
  q_{\min} := \underline T\,|B_R| \;\le\; I_1 \;\le\; D_{\max} :=
  \int_{\R^2}\phi(\Psi)\, dx, \qquad
  n_{\min} := \tfrac{R}{2}\,\underline T\,\bigl|B_R \setminus
  B_{R/2}\bigr| \;\le\; I_{|x|},
\]
and $I_{\mathbf 1_{\{|x| \le r_0\}}} \le D_{\max}$, $I_{|x|^2} \le
P_{\max} := \int_{\R^2}|x|^2\phi(\Psi)\, dx < \infty$, and $I_{|x|} \le
N_{\max} := \int_{\R^2}|x|\,\phi(\Psi)\, dx < \infty$. As with $G_h$
above, only a single lower bound and a single upper bound are needed
from here on: $\rho_{\min} := \min(q_{\min}, n_{\min})$ bounds every
integral below that has a lower bound at all, and $M_{\max} :=
\max(D_{\max}, N_{\max}, P_{\max})$ bounds every integral above.

\emph{Step 5 (ratios).} Everything from here on is two elementary,
model-independent facts about real numbers, applied to specific
quantities already bounded in Steps~1--4.

\emph{Fact A (quotient rule).} For $P, P' \in [0,M]$ and $Q,Q' \ge m >
0$,
\[
  \Bigl|\frac{P}{Q} - \frac{P'}{Q'}\Bigr| \;\le\; \frac{|P-P'|}{m} +
  \frac{M}{m^2}\,|Q-Q'| .
\]
\emph{Proof.} $\frac{P}{Q}-\frac{P'}{Q'} = P\bigl(\frac1Q-\frac1{Q'}
\bigr) + \frac{P-P'}{Q'} = \frac{P(Q'-Q)}{QQ'}+\frac{P-P'}{Q'}$; bound
$P\le M$, $Q,Q'\ge m$. \qed

\emph{Fact B (reciprocal rule).} For $a,b \ge m > 0$, $\bigl|\frac1a -
\frac1b\bigr| \le |a-b|/m^2$, since $\frac1a-\frac1b=\frac{b-a}{ab}$.

Fact~A with $(P,Q,M,m) = (I_{\mathbf 1_{\{|x|\le r_0\}}}, I_1, M_{\max},
\rho_{\min})$ and, separately, with $(I_{|x|^2}, I_1, M_{\max},
\rho_{\min})$, combined with \eqref{eq:Ih-lip}'s common bound
$L_\phi A(t) G_{\max} W_1(\hat\mu,\hat\nu)$ for every numerator and
denominator increment, gives
\[
  \bigl|\theta[\hat\mu]-\theta[\hat\nu]\bigr| \le \theta_1\kappa L_\phi
  A(t) G_{\max}\, W_1(\hat\mu,\hat\nu), \qquad
  \bigl|\eta[\hat\mu]-\eta[\hat\nu]\bigr| \le \eta_1\kappa L_\phi A(t)
  G_{\max}\, W_1(\hat\mu,\hat\nu), \qquad \kappa := \frac1{\rho_{\min}}
  + \frac{M_{\max}}{\rho_{\min}^2}.
\]
The reversion rate $\lambda = \lambda_1/\langle|x|\rangle_T$,
$\langle|x|\rangle_T := I_{|x|}/I_1$, is the \emph{inverted} case:
$\lambda$ is a constant divided by a ratio rather than multiplied by
one, needing Fact~A once more (for
$\langle|x|\rangle_T$ itself, with $(I_{|x|}, I_1, M_{\max},
\rho_{\min})$) and then Fact~B: since $\langle|x|\rangle_T \ge
n_{\min}/D_{\max} \ge \rho_{\min}/M_{\max} =: m^\ast > 0$,
\[
  \bigl|\lambda[\hat\mu] - \lambda[\hat\nu]\bigr| \;=\;
  \lambda_1\Bigl|\frac{1}{\langle|x|\rangle_T[\hat\mu]} -
  \frac{1}{\langle|x|\rangle_T[\hat\nu]}\Bigr| \;\overset{\text{Fact B}}
  {\le}\; \frac{\lambda_1}{(m^\ast)^2}\,\bigl|\langle|x|\rangle_T[\hat\mu]
  - \langle|x|\rangle_T[\hat\nu]\bigr| \;\overset{\text{Fact A}}{\le}\;
  \frac{\lambda_1\kappa L_\phi A(t) G_{\max}}{(m^\ast)^2}\,
  W_1(\hat\mu,\hat\nu).
\]
Summing the three contributions gives the claim, with every term built
from the same three quantities $\kappa$, $G_{\max}$, and $L_\phi A(t)$.

\emph{Step 6 (form of the constant).} Since $c/(1-c) + 1 = 1/(1-c)$, the
amplitude enters $L_\phi A(t) = [(1-c)\bar
K]^{c/(1-c)}\,A(t)^{1/(1-c)}$, while $q_{\min}, n_{\min}, D_{\max},
N_{\max}, P_{\max}$ --- hence also $\rho_{\min}$ and $M_{\max}$, being a
min and a max of quantities that all scale the same way --- are each
proportional to $A(t)^{1/(1-c)}$. So $\kappa L_\phi A(t)$, which is what
every Lipschitz constant in Step~5 is built from, is proportional to
$A(t)^{-1/(1-c)} \cdot A(t)^{1/(1-c)} = A(t)^0$: it does not depend on
$A(t)$ at all, and neither does $m^\ast = \rho_{\min}/M_{\max}$, a ratio
of two quantities with the same $A(t)$-scaling. This is the analytic
counterpart of the mass-normalisation of
\eqref{eq:lam-dimless}--\eqref{eq:eta-dimless}: $A(t)$ is a pure
amplitude and the functionals see only geometry. What survives depends
on $(\omega, t_{\mathrm{eff}}, c, R, r_0)$ alone, and is finite and
locally uniform in $t$ by continuity and positivity of $S, \Lambda,
\vartheta, \delta_K$ on $t_{\mathrm{eff}} \in (0,\infty)$, with
$t_{\mathrm{eff}} \ge h_0^2/2 > 0$ always. The small-$t_{\mathrm{eff}}$
behaviour is governed by $\rho_{\min}^{-2}$ (the dominant term in
$\kappa$ and in $(m^\ast)^{-2}$ alike), proportional to
$\delta_K^{-2/(1-c)}$; by Step~1(c) and $(\Lambda +
1)/S \to 2/(\omega t_{\mathrm{eff}})$ this is
$\exp\bigl(2R^2/((1-c)t_{\mathrm{eff}})(1 + o(1))\bigr)$.
\end{proof}

\begin{corollary}[Missing and corrupted data]\label{cor:missing}
Let $\hat\mu_N$ be the swarm of $N$ items and let measurement produce a
corrupted swarm $\hat\mu'$.
\begin{enumerate}
\item[(i)] \textbf{Random thinning.} If each item is retained
independently with probability $1-\varepsilon$, so that data is missing
uniformly at random, the surviving cloud, conditional on its size $m$,
is a uniformly random size-$m$ subset of $\hat\mu_N$'s $N$ points ---
sampling \emph{without} replacement. Qualitatively, this acts as a
reduction in effective sample size: an i.i.d.\ (with-replacement) sample
of size $m$ drawn directly from the fixed measure $\hat\mu_N$ would
satisfy the hypotheses of the two-dimensional matching results already
used elsewhere in this paper (Hahn and Shao, 1992;
Yukich, 1992; see Ledoux, 2019, eq.~(8)), giving a $W_1$ rate of
$O(\sqrt{\log(1+m)/m})$ relative to $\hat\mu_N$; without-replacement
sampling has no larger variance by Hoeffding's classical finite-population
inequality, so the same order of decay is expected, but we do not derive
the precise matching-theorem rate for finite-population sampling
\emph{without} replacement here, and state only the qualitative
consequence:
\[
  \E\,\bigl|(\lambda,\theta,\eta)[\hat\mu'] -
  (\lambda,\theta,\eta)[\hat\mu_N]\bigr| \;\longrightarrow\; 0
  \qquad \text{as } m \to \infty,
\]
with the perturbation controlled, via Proposition~\ref{prop:w1-stability},
by $C\cdot\E[W_1(\hat\mu',\hat\mu_N)]$, itself expected but not shown
here to vanish at the same order as the mean-field error.
Uniformly missing data acts as a reduced sample size; its effect
vanishes as $N \to \infty$, in contrast to the adversarial case below,
whose effect does not vanish with $N$ at fixed $\varepsilon$.
\item[(ii)] \textbf{Adversarial deletion.} If an arbitrary fraction
$\varepsilon$ of items is deleted (e.g.\ censoring of one platform or
language), write $\hat\mu_N = (1-\varepsilon)\hat\mu' + \varepsilon
\hat\mu_{\mathrm{del}}$; transporting the deleted mass within $B_R$
costs at most its share of the diameter, so $W_1(\hat\mu_N, \hat\mu')
\le 2R\varepsilon$ and the functional perturbation is at most
$2CR\,\varepsilon$: even systematically biased data loss moves the
drivers of $\kappa_t$ by $O(\varepsilon)$ with an explicit constant.
\item[(iii)] \textbf{Count noise.} Multiplicative noise $n_i \mapsto
\zeta_i n_i$ with $\log \zeta_i$ of standard deviation $\sigma_\zeta$
perturbs each standardised virality coordinate by
$O(\sigma_\zeta/s_{\min})$, hence the cloud by $W_1 = O(\sigma_\zeta)$
and the functionals by $O(\sigma_\zeta)$ via
Proposition~\ref{prop:w1-stability}.
\end{enumerate}
\end{corollary}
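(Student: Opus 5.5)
All three parts follow the same template. First I bound the Wasserstein distance $W_1(\hat\mu',\hat\mu_N)$ between the corrupted and clean normalised empirical measures on $B_R$. Then I invoke Proposition~\ref{prop:w1-stability}, whose constant $C=C(\omega,t_{\mathrm{eff}},c,R,r_0)$ depends neither on the configuration nor on the number of items. This gives
\[
\bigl|(\lambda,\theta,\eta)[\hat\mu']-(\lambda,\theta,\eta)[\hat\mu_N]\bigr|\le C\,W_1(\hat\mu',\hat\mu_N).
\]
Both measures are supported in $B_R$ after truncation, so the proposition applies directly. The work therefore reduces to three transport estimates.

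For (ii), I write $\hat\mu_N=(1-\varepsilon)\hat\mu'+\varepsilon\hat\mu_{\mathrm{del}}$. I use the coupling that keeps the mass $(1-\varepsilon)\hat\mu'$ in place (the diagonal coupling of $\hat\mu'$ with itself). It then transports $\varepsilon\hat\mu_{\mathrm{del}}$ onto $\varepsilon\hat\mu'$ by any coupling, for instance the product coupling. Every displacement lies within $B_R$, whose diameter is $2R$, so the cost is at most $2R\varepsilon$. Applying the proposition gives the bound $2CR\varepsilon$.

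For (iii), the noise shifts each $\log n_i$ by $\log\zeta_i$. This perturbs $m_t$ by the sample mean of the $\log\zeta_i$ and $s_t$ by at most the empirical standard deviation of the $\log\zeta_i$, by the triangle inequality in $\ell^2$. With $s_t,s_t'\ge s_{\min}$, each standardised coordinate $v_i$ changes by $O(\sigma_\zeta/s_{\min})(1+|v_i|)$. I will read $\sigma_\zeta$ as the realised empirical scale, or take expectations. Truncation to $B_R$ is $1$-Lipschitz and bounds $|v_i|\le R$, while $r_i$ is unaffected. Coupling each item with its noisy counterpart then gives $W_1\le \frac1N\sum_i|P_i-P_i'|=O(\sigma_\zeta)$, with the constant depending on $R$ and $s_{\min}$. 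The proposition finishes the argument.

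For (i), the first task is the combinatorial claim. Conditional on the number $m$ of survivors, independent Bernoulli retention makes every $m$-subset equally likely, because each has probability $(1-\varepsilon)^m\varepsilon^{N-m}$. For the convergence statement I only need $\E\,W_1(\hat\mu',\hat\mu_N)\to0$ as $m\to\infty$, since the functionals are bounded by the Step~4 bounds in the proof of Proposition~\ref{prop:w1-stability} and $C$ is uniform.

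This is the main obstacle, because the statement deliberately avoids a finite-population matching rate. I would first try a Lipschitz-test-function argument: $W_1$ on $B_R$ is a supremum over $1$-Lipschitz functions vanishing at $0$. This class is totally bounded in sup norm by Arzel\`a--Ascoli, so an $\epsilon$-net of size $n(\epsilon)$ reduces the problem to finitely many sample means. For each fixed $f$, Hoeffding's without-replacement inequality gives concentration at rate $e^{-m\epsilon^2/(8R^2)}$. A union bound over the net then shows $\E W_1\le 3\epsilon+2R\,n(\epsilon)e^{-cm\epsilon^2}$, which tends to zero as $m\to\infty$ with $\epsilon$ fixed and then $\epsilon\downarrow0$. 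This yields the qualitative limit without a rate.

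The contrast with (ii) then follows. In (i) the bound vanishes as $m\sim(1-\varepsilon)N\to\infty$, while the $2R\varepsilon$ bound in (ii) does not depend on $N$.
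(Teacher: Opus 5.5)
Your arguments for (ii) and (iii) match the paper's inline ones. For (ii) you use the coupling that leaves $(1-\varepsilon)\hat\mu'$ in place and moves the deleted mass within $B_R$. For (iii) you bound the displacement item by item and then apply Proposition~\ref{prop:w1-stability}; you are more careful than the paper in tracking the re-standardisation through $m_t$ and $s_t$.

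Part (i) is where you genuinely differ. The paper reduces the perturbation to $C\,\E[W_1(\hat\mu',\hat\mu_N)]$. It then argues only by analogy that this vanishes, citing the with-replacement matching rate and Hoeffding's variance comparison, and proves nothing about sampling without replacement. Your argument actually establishes $\E W_1 \to 0$: an $\epsilon$-net over $1$-Lipschitz test functions on $B_R$, Hoeffding's without-replacement tail bound, and a union bound. The limit is uniform in $N \ge m$ and in the configuration, which gives $m \to \infty$ (and hence $N \to \infty$) a clean meaning. The cost is that the $\exp(O((R/\epsilon)^2))$ covering numbers yield only an implicit rate of order $m^{-1/4}$.

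If you want the rate the paper hedges on, its own hint can be made rigorous. Hoeffding's comparison in convex-ordering form, $\E f(\text{sum without replacement}) \le \E f(\text{sum with replacement})$ for continuous convex $f$, also holds for vector-valued populations, since the standard proof is dimension-free. Apply it to the convex map $\nu \mapsto W_1(\nu,\hat\mu_N)$ of the sample's empirical measure. This bounds the thinned-cloud error by the i.i.d.\ error outright.

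Two small repairs are needed.

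First, in (iii), truncation does not control the factor $|v_i|$ in your estimate $O(\sigma_\zeta/s_{\min})(1+|v_i|)$. That $v_i$ is the raw standardised value, truncation acts afterwards, and the raw value can be of order $\sqrt N$. What controls it is the standardisation itself: $N^{-1}\sum_i v_i^2 \le 1$, so the average of $1+|v_i|$ is at most $2$. Write $z_i = \log\zeta_i$, $\bar z$ for their mean, and $\hat\sigma_\zeta$ for their empirical standard deviation. Then $v_i'-v_i = \frac{s_t-s_t'}{s_t'}\,v_i + \frac{z_i-\bar z}{s_t'}$ and $|s_t-s_t'| \le \hat\sigma_\zeta$. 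Since truncation is $1$-Lipschitz, $W_1 \le 2\hat\sigma_\zeta/s_{\min}$, with no dependence on $R$.

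Second, the closing contrast in (i) needs a lower bound for (ii), not just an upper bound that does not depend on $N$. The adversarial effect fails to vanish because deletion is a deterministic change of the normalised measure. So any fixed pair of configurations with different functional values, replicated at every $N$, gives a perturbation that does not shrink with $N$.
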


In summary, $\kappa_t$ depends on attention data only through
scale-free, mass-normalised geometry, measured through a smoothing
kernel with explicit Lipschitz control: platform size, uniformly missing
data, moderately noisy counts, and biased partial coverage perturb the
volatility dynamics continuously and quantifiably. Empirical calibration
of $(\tau, \Lambda, \lambda_1, \theta_1, r_0)$ on market data is thereby
a well-posed problem, deferred to companion work.

\subsection{Enhanced Sparrow Search Dynamics}
\label{sec:esso-dynamics}

\paragraph{Status of the swarm dynamics.} The update rules below are a
finite-$N$ simulation scheme, not the object of the limit theorems.
Their literal form contains rank-dependent and extremal terms that fall
outside exchangeable mean-field theory (Appendix~\ref{app:derivation});
the regularised form, in which extrema are replaced by Gibbs
soft-extrema and threshold switches by logistic weights, satisfies the
sharp propagation-of-chaos estimate of Theorem~\ref{thm:poc-esso}.
Readers interested only in the analytic pipeline may proceed directly to
Section~\ref{sec:pde}.

Each item $i \in \mathcal{I}_t$ is represented as a sparrow $j$ with
position $P_j(t) = (v_j(t), r_j(t)) \in \R^2$ obtained from
Definition~\ref{def:mapping}. When an information stream is present, the
swarm is seeded at the mapped item positions; a fraction $p_x \in [0,1)$
of auxiliary exploratory sparrows is additionally initialised by the
Tent chaotic map below, guaranteeing coverage of regions of the plane
where no items currently sit. In the simulation exhibits of
Section~\ref{sec:simulation}, where no stream is prescribed, the entire
swarm is Tent-initialised.

To improve exploration and avoid premature convergence, exploratory
sparrow positions are initialised using the Tent chaotic map: for each
dimension $d$,
\[
x_{j,d}(0) =
\begin{cases}
\mu\, x_{j,d} & 0 \le x_{j,d} < 0.5,\\
\mu\,(1 - x_{j,d}) & 0.5 \le x_{j,d} \le 1,
\end{cases}
\]
where $x_{j,d} \sim U[0,1]$ and $\mu \in (0,1)$, rescaled to the feature
ranges $[0, v_{\max}]$ and $[0, r_{\max}]$.

The proportion of sparrows acting as producers, $p_d(t)$, and the
Gaussian-mutation probability, $p_m(t)$, decay over time:
\begin{align}
p_d(t) &= p_{d,\min} + (p_{d,\max} - p_{d,\min})
          \exp\!\Bigl(-c_1 \frac{t^2}{T_{\max}^2}\Bigr),
          \label{eq:pd}\\
p_m(t) &= p_{m,\min} + (p_{m,\max} - p_{m,\min})
          \Bigl(1 - \frac{t}{T_{\max}}\Bigr)^{c_2}.
          \label{eq:pm}
\end{align}

The population is categorized into producers, scroungers, influencer
sparrows $S_I$, and contrarian sparrows $S_C$. Producers are the top
$p_d(t)N$ most stable high-view sources, i.e.\ items whose attention
counts have remained persistently high over the rolling window;
influencers are high-impact items in unstable regimes; contrarians
oppose dominant sentiment gradients.

\paragraph{Producer update.}
\[
X_{j,d}(t+1) =
\begin{cases}
X_{j,d}(t)\, \exp\!\bigl(-\tfrac{j}{\nu T_{\max}}\bigr) & R_2 < ST,\\[2pt]
X_{j,d}(t) + Q & R_2 \ge ST,
\end{cases}
\qquad d \in \{1,2\},
\]
with $\nu, R_2 \sim U(0,1)$, $ST \in (0,1)$, $Q \sim \mathcal{N}(0,1)$.

\paragraph{Scrounger update.} In the present model, scroungers follow
the logarithmic attention gradient $\nabla \log T$ rather than the bare
gradient. This is the Bayesian-update interpretation: agents move along
the log-likelihood gradient of the attention field, which is the natural
objective for rational Bayesian followers. Concretely, when scrounger
$j$ moves toward higher attention, its velocity component is
\[
v_j^{\mathrm{scrounge}} \;\propto\; \nabla \log T(X_j, t)
= \frac{\nabla T(X_j, t)}{T(X_j, t)},
\]
so that the step size is amplified where the attention gradient is large
relative to local intensity, and saturates as $T \to \infty$. The
discrete update rule is
\[
X_{j,d}(t+1) =
\begin{cases}
Q \exp\!\Bigl(\dfrac{X_{\mathrm{worst},d}(t) - X_{j,d}(t)}{j^2}\Bigr)
  & j \le N/2,\\[8pt]
X_{j,d}(t) + \rho_a \bigl(r_c(t) - r_j(t)\bigr)\cdot
  \dfrac{\max_k v_k(t)}{R} & j > N/2,\ d = 1,\\[8pt]
X_{j,d}(t) + \rho_a \bigl(\theta_c(t) - \theta_j(t)\bigr)\cdot
  \dfrac{\max_k r_k(t)}{2\pi} & j > N/2,\ d = 2,\\[8pt]
X_{p,d}(t+1) + \bigl(X_{j,d}(t) - X_{p,d}(t+1)\bigr)\, A_d^{+}
  & \text{otherwise},
\end{cases}
\]
where $X_{\mathrm{worst},d}(t) = \arg\min_k X_{k,d}(t)$, $X_p(t+1)$ is
the best producer position, and $A^{+} = A^{T}(A A^{T})^{-1}$ with $A
\in \{-1,1\}^{1\times 2}$ a random sign row-vector, so that $A_d^{+} =
\pm\tfrac12$ componentwise: each coordinate moves toward the best
producer by a random-signed multiple of the coordinatewise distance,
following the componentwise convention of Xue and Shen (2020). The
parameter $\rho_a > 0$ is the attraction strength. Here $(r_c,
\theta_c)$ is the centre of gravity of the discretised sentiment field
computed below; the attraction is implemented in the polar
parameterisation of the (virality, recency) plane so as to draw
scroungers toward high-attention regions of the field. We do not claim
this discrete rule reproduces log-gradient drift in a strict mean-field
limit; the foundation of the macroscopic PDE is given in
Appendix~\ref{app:derivation} and the log-gradient form is the
equivalent macroscopic behaviour of the resulting equation in its $v =
\log T$ form.

\paragraph{Contrarian update.} For $j \in S_C$,
\[
X_j(t+1) = X_j(t) - \zeta\, \nabla T(X_j(t), t),
\]
where $\zeta > 0$ scales the contrarian opposition strength. The
combined herding and contrarian balance, parameterised by a single
coefficient $c \in (0,1)$ in the macroscopic equation below, governs the
net herding intensity in the continuum limit.

\paragraph{Centre of gravity.} The values $(r_c(t), \theta_c(t))$, used
in scrounger updates, are computed from the discretised sentiment field
at the previous step:
\begin{align}
r_c(t) &= \frac{\sum_m \sum_n T(r_m, \theta_n, t-\Delta t)\, r_m}
               {\sum_m \sum_n T(r_m, \theta_n, t-\Delta t)},
               \label{eq:rc}\\
\theta_c(t) &= \frac{\sum_m \sum_n T(r_m, \theta_n, t-\Delta t)\,
               \theta_n}{\sum_m \sum_n T(r_m, \theta_n, t-\Delta t)}.
               \label{eq:thetac}
\end{align}

\paragraph{Mutation.} With probability $p_m(t)$, a sparrow undergoes
Gaussian mutation:
\[
X_{j,d}(t+1) = X_{j,d}(t) + \sigma(t)\, Z, \qquad Z \sim
\mathcal{N}(0,1).
\]

\begin{table}[ht]
\centering
\begin{tabular}{ll}
\toprule
Symbol & Description \\
\midrule
$i,\ s_i,\ n_i(t)$ & Item, arrival time, attention count (Def.~\ref{def:stream}) \\
$\tau,\ \Lambda,\ R,\ B_R$ & Memory scale, window, recency range, truncation ball (Def.~\ref{def:mapping}) \\
$m_t,\ s_t$ & Cross-sectional mean / std of $\log n_i(t)$ ($s_t \ge s_{\min} > 0$) \\
$j$ & Index of a sparrow \\
$P_j(t)$ & Position $(v_j(t), r_j(t))$ from the map \eqref{eq:mapping} \\
$v_j(t)$ & Virality: standardised log attention \\
$r_j(t)$ & Recency score \\
$p_x$ & Fraction of Tent-initialised exploratory sparrows \\
$p_d(t)$ & Proportion of producers \\
$p_m(t)$ & Mutation probability \\
$c_1, c_2$ & Decay rate hyperparameters \\
$Q, Z$ & Standard normal random variables \\
$\rho_a$ & Scrounger attraction strength \\
$\zeta$ & Contrarian scaling \\
$(r_c, \theta_c)$ & Centre of gravity \\
$\nabla T$ & Sentiment gradient \\
\bottomrule
\end{tabular}
\caption{Key variables of the measurement map and the Enhanced Sparrow
Search Optimization (ESSO).}
\label{tab:symbols}
\end{table}

\subsection{Algorithmic Formulation}
\label{sec:algorithm}

The algorithm iterates: (0)~at each information-arrival time, apply the
map \eqref{eq:mapping} to the active items $\mathcal{I}_t$, injecting
sparrows for newly arrived items and retiring items that have left the
window $[t-\Lambda, t]$; (1)~initialise the
exploratory contingent via the Tent map; (2)~at each step update
$p_d(t), p_m(t)$ via \eqref{eq:pd}-\eqref{eq:pm}, compute the centre of
gravity from the discretised $T(\cdot,\cdot,t-\Delta t)$, assign roles
dynamically, update each sparrow by its role rule, and apply Gaussian
mutation with probability $p_m(t)$.

\section{Sentiment Heat Equation: Derivation and Closed-Form Solution}
\label{sec:pde}

We now derive the PDE governing the limiting sentiment field, linearise
it exactly through a power-law substitution, and solve it with a Mehler
kernel. (We use \emph{attention field} and \emph{sentiment field}
interchangeably: the former emphasises the measured data, the latter the
power-law readout.) This closed form is what makes $\kappa_t$ observable
in the next section: each radial functional of the field becomes an
integral against a kernel we can write down explicitly.

\subsection{The Empirical Sparrow Density Measure}
\label{sec:empirical}

At each iteration $t$, the state of the system is determined by the
positions $\{X_j(t)\}_{j=1}^N \subset \R^2$, obtained from the
information stream via Definition~\ref{def:mapping} and evolved by the
ESSO dynamics of Section~\ref{sec:esso-dynamics}. We associate the
empirical sparrow density measure
\begin{equation}\label{eq:empirical}
\mu_t^N := \sum_{j=1}^N \delta_{X_j(t)} \in \mathcal{M}_+(\R^2),
\end{equation}
where $\mathcal{M}_+(\R^2)$ denotes non-negative Radon measures equipped
with the vague topology. For any Borel set $A$, $\mu_t^N(A)$ counts
sparrows in $A$, with total mass $N$.
In the large-$N$ regime the \emph{normalised} empirical measure of the
surviving particles converges,
\begin{equation}\label{eq:weak}
\frac{\mu_t^N}{\mu_t^N(\R^2)} \rightharpoonup^{*}
\frac{u(\cdot, t)\, dx}{\int_{\R^2} u(\cdot,t)\, dx}
\quad \text{as } N \to \infty,
\end{equation}
where $u(\cdot,t) \ge 0$ belongs to $L^1(\R^2) \cap L^\infty(\R^2)$ and
solves the linear field equation \eqref{eq:linear}. The particle system
is an estimator of the linear field $u$; the sentiment field is the
static readout $T = [(1-c)u]^{1/(1-c)}$ applied afterwards, exactly as
in Section~\ref{sec:kde}. The dynamics ensure the uniform second-moment
bound
\begin{equation}\label{eq:moment}
\sup_{N \in \mathbb{N}} \sup_{j=1,\dots,N}
\E\bigl[\,|X_j(t)|^2\,\bigr] \le M(t) < \infty.
\end{equation}
The quantitative convergence result at rate $O(N^{-1/2}\sqrt{\log N})$
in the normalised Wasserstein-1 distance, which is the sharp
two-dimensional matching rate, holds for the independent
diffusion-with-killing particle system (Theorem~\ref{thm:poc} in
Appendix~\ref{app:meanfield}), and transfers to the volatility
functionals through the readout (Corollary~\ref{cor:functional-poc}).
The literal ESSO dynamics contain rank-dependent updates and extremal
terms ($\arg\min$/$\arg\max$) that place them outside the standard
bounded-Lipschitz mean-field framework. We therefore consider a
regularised version of ESSO, in which these terms are replaced by
bounded-Lipschitz surrogates (Gibbs soft-extrema and logistic weights).
This regularised dynamics satisfies the same propagation-of-chaos
estimate (Theorem~\ref{thm:poc-esso} in Appendix~\ref{app:derivation}).
The macroscopic PDE \eqref{eq:pde} is precisely the mean-field limit of
the diffusion-with-killing system.

\subsection{Derivation of the Macroscopic Sentiment Equation}
\label{sec:pde-derivation}

The limiting sentiment field $T(x,t)$ obeys a nonlinear
reaction-diffusion equation. The equation is motivated by the structure
of the ESSO dynamics: Gaussian mutations contribute exploratory
diffusion, the attention-attraction rules of producers and scroungers
contribute a herding-type drift, contrarian opposition contributes a
counter-flow, and memory decay contributes a local sink. The limiting
equation is
\begin{equation}\label{eq:pde}
\partial_t T = \Delta T - \frac{c}{T}\,|\nabla T|^2 - W(x)\, T
\qquad \text{in } \R^2 \times (0,\infty),
\end{equation}
with strictly positive initial data
\begin{equation}\label{eq:init}
T(\cdot, 0) = T_0 \in L^1(\R^2) \cap L^\infty(\R^2), \qquad T_0 > 0
\text{ a.e.},
\end{equation}
and a quadratic radial sink
\begin{equation}\label{eq:sink}
W(x) = \alpha + \beta\, |x|^2, \qquad \alpha, \beta > 0.
\end{equation}

Reading \eqref{eq:pde} term by term, the Laplacian $\Delta T$ is the
exploratory diffusion generated by Gaussian mutations and chaotic
search. The nonlinear term $-(c/T)|\nabla T|^2$ is the macroscopic
signature of net herding and contrarian dynamics, with intensity
controlled by $c = c_{\mathrm{herd}} - c_{\mathrm{contra}} \in (0,1)$.
The $1/T$ factor introduces saturation: very dense attention clusters
experience progressively weaker per-unit-gradient herding, consistent
with the phenomenon that even highly viral narratives stop accelerating
once attention has saturated. The herding interpretation is most
transparent in the equivalent log-form: setting $v := \log T$ and
dividing \eqref{eq:pde} by $T$, the equation becomes $\partial_t v =
\Delta v + (1-c)|\nabla v|^2 - W$, a viscous Hamilton--Jacobi equation
whose quadratic Hamiltonian $(1-c)|\nabla v|^2$ captures gradient-driven
amplification of log-attention. The derivation of \eqref{eq:pde} as a
strict mean-field limit follows a different but equivalent process; the
full construction is given in Appendix~\ref{app:derivation}.

The sink $-W(x)T = -(\alpha + \beta|x|^2)T$ implements two effects:
uniform forgetting at rate $\alpha$, and a quadratic radial recency
penalty $\beta|x|^2$ that accelerates dissipation of attention to items
distant from the origin in the (virality, recency) plane. The quadratic
radial form produces Gaussian-like decay in $T$ at large distance,
consistent with the Gaussian-like tails documented in empirical
attention-decay distributions, and is the unique radial form compatible
with the closed-form Mehler solution (Condition~5 of
Section~\ref{sec:conditions}).

The interplay between diffusive exploration, log-gradient herding with
saturation, and quadratic radial dissipation reproduces the
characteristic lifecycle of market narratives: emergence of intense
localised sentiment clusters, finite lifetime governed by both the
position and the saturation of the cluster, and dissolution followed by
relocation when new events arrive.

\subsection{Exact Linearisation via Power-Law Substitution}
\label{sec:linearisation}

Equation \eqref{eq:pde} belongs to the class of nonlinear PDEs admitting
exact linearisation via a power-law substitution. We define
\begin{equation}\label{eq:subst}
u(x,t) := \frac{T(x,t)^{1-c}}{1-c}, \qquad \text{equivalently} \qquad
T(x,t) = \bigl[(1-c)\, u(x,t)\bigr]^{1/(1-c)},
\end{equation}
valid for any $c \in (0,1)$. Direct substitution (verified in
Appendix~\ref{app:powerlaw}) yields
\begin{equation}\label{eq:linear}
\partial_t u = \Delta u - (1-c)\, W(x)\, u
\end{equation}
on $\R^2 \times (0,\infty)$, with initial data $u(x,0) =
T_0(x)^{1-c}/(1-c)$. This is the linear parabolic equation with the
quadratic radial sink
\[
V(x) := (1-c)\, W(x) = (1-c)\alpha + (1-c)\beta |x|^2.
\]
The closed-form solution, derived in Section~\ref{sec:mehler} below,
involves the natural rate parameter
\[
\omega := 2\sqrt{(1-c)\beta} > 0
\]
together with a constant level shift $(1-c)\alpha$.

\begin{proposition}[Power-law substitution]\label{prop:powerlaw}
Let $c \in (0,1)$ and let $W : \R^2 \to \R$ be continuous. For any
strictly positive function $u \in C^{2,1}(\R^2 \times (0,\infty))$,
define $T(x,t) := [(1-c)u(x,t)]^{1/(1-c)}$. Then $T$ inherits the same
regularity and positivity from $u$, and $u$ satisfies \eqref{eq:linear}
if and only if $T$ satisfies \eqref{eq:pde}.
\end{proposition}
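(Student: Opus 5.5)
The proof is a direct chain-rule computation. Set $p := 1/(1-c) > 1$, so that $T = [(1-c)u]^p$. Since $u > 0$ and $z \mapsto [(1-c)z]^p$ is $C^\infty$ on $(0,\infty)$, $T$ is strictly positive and lies in $C^{2,1}$. Conversely, $u = T^{1-c}/(1-c)$ recovers $u$ from $T$ through a smooth map on $(0,\infty)$, so the correspondence is a bijection between positive $C^{2,1}$ functions. This settles the inherited regularity and positivity claim.

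The derivatives follow from the chain rule. Writing $\phi(z) = [(1-c)z]^{p}$, we have $\phi'(z) = [(1-c)z]^{p-1} = [(1-c)z]^{c/(1-c)}$ and $\phi''(z) = c\,[(1-c)z]^{p-2}$. Hence
\[
\partial_t T = \phi'(u)\,\partial_t u, \qquad \nabla T = \phi'(u)\nabla u, \qquad \Delta T = \phi'(u)\Delta u + \phi''(u)|\nabla u|^2 .
\]
These give two identities, both from $(1-c)u = T^{1-c}$:
\[
\frac{\phi''(u)}{\phi'(u)^2} = \frac{c}{[(1-c)u]^{p}} = \frac{c}{T},
\]
so $\phi''(u)|\nabla u|^2 = \tfrac{c}{T}|\nabla T|^2$; and
\[
\phi'(u)\,(1-c)u = [(1-c)u]^{p} = T .
\]

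Substituting into the right-hand side of \eqref{eq:pde}, the gradient terms cancel exactly:
\[
\Delta T - \tfrac{c}{T}|\nabla T|^2 - WT = \phi'(u)\bigl[\Delta u - (1-c)W u\bigr].
\]
Therefore
\[
\partial_t T - \text{RHS of \eqref{eq:pde}} = \phi'(u)\bigl[\partial_t u - \Delta u + (1-c)Wu\bigr]
\]
pointwise on $\R^2\times(0,\infty)$. Because $\phi'(u) > 0$ whenever $u > 0$, one side vanishes identically if and only if the other does, which is the equivalence.

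There is no real obstacle. The step needing care is the cancellation identity $\phi''/\phi'^2 = c/T$, where the exponent bookkeeping $p-2 = 2(p-1) - p$ must be checked. One must also note that strict positivity of $u$ is what keeps $\phi'(u)$ nonzero and finite, so that dividing by it is legitimate. The argument uses no property of $W$ beyond its being a continuous pointwise multiplier.
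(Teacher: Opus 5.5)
Your proof is correct and is essentially the paper's argument. The paper differentiates $u = T^{1-c}/(1-c)$ and expands $\Delta u$ in terms of $T$, while you differentiate $T = \phi(u)$ and expand $\Delta T$ in terms of $u$. Both routes arrive at the same pointwise identity, in which the residual of \eqref{eq:pde} equals the residual of \eqref{eq:linear} times the positive factor $\phi'(u) = T^{c}$. Your single-identity formulation gives both directions of the equivalence at once, which is slightly cleaner than the paper's separate forward and reverse passes.
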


The forward direction is established by direct computation.
Differentiating $u = T^{1-c}/(1-c)$ via the chain rule gives $\partial_t
u = T^{-c}\partial_t T$ and $\nabla u = T^{-c}\nabla T$. The product
rule for divergence then yields
\[
\Delta u = \nabla \cdot \bigl(T^{-c}\nabla T\bigr)
= -c\, T^{-c-1}|\nabla T|^2 + T^{-c}\Delta T,
\]
in which the gradient-squared term appears as a structural consequence
rather than as a separate $|\nabla u|^2$ contribution. Substituting
these expressions into \eqref{eq:linear} and multiplying through by
$T^c$ produces \eqref{eq:pde}; multiplying through by $T^{-c}$ instead
reverses the direction. The complete step-by-step computation, including
the regularity argument and the reverse implication, is given in
Appendix~\ref{app:powerlaw}.

\subsection{Closed-Form Mehler-Kernel Solution}
\label{sec:mehler}

The linear equation \eqref{eq:linear} admits the explicit Mehler-kernel
representation
\begin{equation}\tag{CS}\label{eq:CS}
u(x,t) = e^{-(1-c)\alpha t} \int_{\R^2} K_t(x,y;\omega)\, u(y,0)\, dy
\end{equation}
where, for the two-dimensional Mehler kernel with rate parameter
$\omega$,
\begin{equation}\label{eq:kernel}
K_t(x,y;\omega) = \frac{\omega}{4\pi \sinh(\omega t)}
\exp\!\Bigl(-\frac{\omega}{4\sinh(\omega t)}
\bigl[(|x|^2 + |y|^2)\cosh(\omega t) - 2\, x\cdot y\bigr]\Bigr).
\end{equation}
The sentiment field is recovered as $T(x,t) = [(1-c)u(x,t)]^{1/(1-c)}$.

The Mehler kernel \eqref{eq:kernel} reduces, for small $\omega t$, to
the standard 2D Gaussian heat kernel $(4\pi t)^{-1}\exp(-|x-y|^2/(4t))$,
recovering pure diffusion in the absence of confinement. As a function
of $x$ around a fixed source $y$, the kernel is Gaussian with
per-dimension variance $2\tanh(\omega t)/\omega$; the effective
bandwidth $h(t) = \sqrt{2\tanh(\omega t)/\omega}$ therefore grows as
$\sqrt{2t}$ for small $\omega t$ and saturates at the Mehler length
scale $\sqrt{2/\omega}$, reflecting the strong confining effect of the
quadratic radial sink. For large $\omega t$, the kernel decays
super-exponentially in $|x|$ and $|y|$.

\begin{theorem}[Existence, uniqueness, and bounds]\label{thm:field}
For any initial data $T_0 \in L^1 \cap L^\infty(\R^2)$ with $T_0 > 0$
almost everywhere, the representation \eqref{eq:CS} together with $T =
[(1-c)u]^{1/(1-c)}$ defines the unique classical solution of
\eqref{eq:pde}-\eqref{eq:init} on $\R^2 \times (0,\infty)$. The
solution satisfies:
\begin{enumerate}
\item[(i)] Positivity: $T(x,t) > 0$ for all $(x,t) \in \R^2 \times
(0,\infty)$;
\item[(ii)] Pointwise boundedness: $\sup_x T(x,t) \le \sup_x T_0(x)$ for
all $t > 0$;
\item[(iii)] Mass monotonicity: $\frac{d}{dt}\int T\, dx \le 0$, with
strict inequality when $T \not\equiv 0$;
\item[(iv)] Gaussian decay at infinity: for any $t > 0$ there exist
$C(t), \lambda(t) > 0$ such that $T(x,t) \le
C(t)\exp(-\lambda(t)|x|^2)$; the proof yields the explicit rate
$\lambda(t) = \omega\tanh(\omega t)/\bigl(4(1-c)\bigr)$.
\end{enumerate}
\end{theorem}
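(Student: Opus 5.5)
The plan is to work entirely with the linear problem \eqref{eq:linear} and then transfer everything to $T$ through Proposition~\ref{prop:powerlaw}.

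\emph{Step 1: the linear problem.} The initial datum $u_0 := T_0^{1-c}/(1-c)$ lies in $L^\infty$, and $u_0>0$ a.e. I will first check that the Mehler kernel \eqref{eq:kernel}, multiplied by $e^{-(1-c)\alpha t}$, is the fundamental solution of $\partial_t - \Delta + (1-c)\alpha + (1-c)\beta|x|^2$. The cleanest route is a direct computation: writing $K_t = a(t)\exp(-b(t)(|x|^2+|y|^2) + e(t)\,x\cdot y)$ reduces the check to three Riccati-type ODEs in $t$, which the hyperbolic functions solve with $\omega^2 = 4(1-c)\beta$. Alternatively one can cite the classical Mehler formula.

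Because $K_t$ is Gaussian in $y$ and $u_0$ is bounded, \eqref{eq:CS} converges absolutely. Differentiation under the integral then gives $u\in C^{2,1}$, and $u(\cdot,t)\to u_0$ a.e.\ as $t\downarrow0$, since $K_t$ is an approximate identity. \emph{Positivity} (i) follows because $K_t>0$ and $u_0>0$ on a set of positive measure, so $u>0$ everywhere, and hence $T>0$.

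\emph{Step 2: bounds.} For (ii), the Feynman--Kac/comparison structure gives $\int K_t(x,y)\,dy \le 1$. The reason is that $K_t$ is the kernel of a semigroup with nonnegative potential $(1-c)\beta|x|^2$, which is sub-Markovian. Alternatively, the bound follows from the maximum principle applied to the constant supersolution $\sup u_0$. Hence $u\le \sup u_0$, and since $\phi$ is monotone, $T\le \sup T_0$.

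For (iv), I will use the inequality $(|x|^2+|y|^2)\cosh - 2x\cdot y \ge |x|^2\sinh^2/\cosh$, obtained by minimising the quadratic form over $y$. This gives
\[
K_t \le \frac{\omega}{4\pi\sinh(\omega t)}\,e^{-\frac{\omega}{4}\tanh(\omega t)\,|x|^2},
\]
so $u\le C\,e^{-\vartheta|x|^2}\,\|u_0\|_{L^1}$. Here I need $u_0\in L^1$; this holds because $T_0^{1-c}\le \|T_0\|_\infty^{1-c}$, but integrability needs care. If $u_0$ is merely bounded, I will use $\|u_0\|_\infty$ together with a bound on $\int K_t\,dy$ instead. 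Raising to the power $1/(1-c)$ gives the rate $\omega\tanh(\omega t)/(4(1-c))$.

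For (iii), Gaussian decay of $T$ and of its gradients justifies integrating \eqref{eq:pde}. The result is
\[
\frac{d}{dt}\int T \;=\; -c\int \frac{|\nabla T|^2}{T} \;-\; \int W\,T \;<\; 0,
\]
where the boundary terms vanish by the decay, and the sign uses $W\ge\alpha>0$ and $T>0$.

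\emph{Step 3: uniqueness.} This is the main obstacle, since uniqueness for \eqref{eq:pde} must be understood within a class of solutions. The plan is to take a positive classical solution $\tilde T$ of the nonlinear problem. By Proposition~\ref{prop:powerlaw}, $\tilde u = \tilde T^{1-c}/(1-c)$ solves the linear equation with the same datum and is bounded; I will restrict to bounded solutions, which is the natural class given (ii). The difference $w=u-\tilde u$ then solves the linear equation with zero datum and a nonnegative potential.

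To conclude $w\equiv0$, I will apply the Phragm\'en--Lindel\"of/Tychonoff maximum principle for bounded solutions of $\partial_t w = \Delta w - V w$ with $V\ge0$, comparing $\pm w$ against $\varepsilon(|x|^2+4t)+\varepsilon$. This yields $w\equiv0$. Positivity of $\tilde u$ is used only to invert the substitution, and classical regularity near $t=0$ is needed only in the weak sense of attaining the datum.
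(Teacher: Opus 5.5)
Your proposal is correct and has the same structure as the paper's proof. You pass to $u$ by the power-law substitution, define $u$ by \eqref{eq:CS}, and read positivity off $K_t>0$. You get the Gaussian envelope from the completed-square inequality $(|x|^2+|y|^2)\cosh(\omega t)-2x\cdot y\ge |x|^2\sinh^2(\omega t)/\cosh(\omega t)$, integrate \eqref{eq:pde} for (iii), and prove uniqueness by moving two positive solutions to the linear equation within the bounded class. The differences are local.

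\emph{Part (ii).} The paper applies the maximum principle to the nonlinear equation, using that the herding term vanishes where $\nabla T=0$. It then invokes Gaussian decay to keep the supremum from escaping to infinity. Your sub-Markov kernel bound is more robust: it needs neither continuity at $t=0$ (only $T_0\in L^\infty$ is assumed) nor a separate argument at infinity.

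\emph{Uniqueness.} The paper only cites the contraction semigroup, whereas your barrier $\varepsilon(|x|^2+4t)+\varepsilon$ actually proves bounded-class uniqueness. Because the datum is merely bounded, run the comparison on $[s,\infty)$ first. There $w(\cdot,s)$ is continuous and is attained locally uniformly by both $w(\cdot,s+\cdot)$ and its Mehler evolution, which gives $w(\cdot,t+s)=e^{-(1-c)\alpha t}\int K_t(\cdot,y)\,w(y,s)\,dy$. Then let $s\downarrow 0$ by dominated convergence, using a.e.\ attainment of $u_0$.

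\emph{Part (iv).} Discard the claim that $u_0\in L^1$ follows from $T_0^{1-c}\le\|T_0\|_\infty^{1-c}$. Boundedness does not give integrability, and $u_0$ can genuinely fail to be integrable: take $T_0=(1+|x|)^{-p}$ with $2<p<2/(1-c)$. Your fallback is the right argument and loses nothing. By symmetry of the kernel and the completed square behind Remark~\ref{rem:mass}, $\int K_t(x,y)\,dy=\operatorname{sech}(\omega t)\,e^{-\frac{\omega}{4}\tanh(\omega t)|x|^2}$. Hence
\[
T(x,t)\;\le\; e^{-\alpha t}\operatorname{sech}(\omega t)^{1/(1-c)}\,\|T_0\|_\infty\,\exp\Bigl(-\frac{\omega\tanh(\omega t)}{4(1-c)}\,|x|^2\Bigr),
\]
which gives (ii) and (iv) at once, with the stated rate. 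This also repairs the paper's own proof, which asserts $u_0\in L^1\cap L^\infty$ and puts $\|u_0\|_{L^1}$ into $C(t)$.
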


\begin{proof}
See Appendix~\ref{app:powerlaw}.
\end{proof}
\begin{remark}[Exact mass-decay law]\label{rem:mass}
Completing the square in \eqref{eq:kernel} gives, for every $y \in \R^2$,
\begin{equation}\label{eq:masslaw}
  \int_{\R^2} K_t(x,y;\omega)\, dx
  = \operatorname{sech}(\omega t)\,
    \exp\Bigl(-\tfrac{\omega}{4}\tanh(\omega t)\,|y|^2\Bigr).
\end{equation}
The mass loss is position-dependent, as it must be for a Feynman--Kac
semigroup with the position-dependent killing rate $(1-c)W$: the same
Gaussian factor that governs the spatial envelope in
Theorem~\ref{thm:field}(iv) governs survival. Consequently
\begin{equation}\label{eq:umass}
  \int_{\R^2} u(x,t)\, dx
  = e^{-(1-c)\alpha t}\operatorname{sech}(\omega t)
    \int_{\R^2} e^{-\frac{\omega}{4}\tanh(\omega t)|y|^2}\, u_0(y)\, dy,
\end{equation}
strictly decreasing in $t$, an exact quantitative counterpart, at the
level of $u$, of the qualitative mass monotonicity in
Theorem~\ref{thm:field}(iii). For the discrete datum \eqref{eq:datum}
the semigroup property makes \eqref{eq:umass} fully explicit:
\[
  \int_{\R^2} u(x,t)\, dx
  = e^{-w/2}\,e^{-(1-c)\alpha t}\,\operatorname{sech}(\omega
  t_{\mathrm{eff}})\,\frac{1}{N}\sum_{j=1}^N
  e^{-\frac{\omega}{4}\tanh(\omega t_{\mathrm{eff}})|X_j|^2},
  \qquad t_{\mathrm{eff}} = t + h_0^2/2,
\]
so attention mass decays faster when the swarm sits far from the origin
in the (virality, recency) plane --- stale and hyper-viral
configurations dissipate first, as intended in
Section~\ref{sec:pde-derivation}. As $t \to \infty$, \eqref{eq:masslaw}
is asymptotic to $2e^{-\omega t}e^{-\omega|y|^2/4}$, the ground-state
projection of $-\Delta + \tfrac{\omega^2}{4}|x|^2$ at energy $E_0 =
\omega$.
\end{remark}
\subsection{Kernel Density Representation of the Sentiment Field}
\label{sec:kde}

For the purpose of numerical computation and visualisation, we derive an
explicit representation of the sentiment field in terms of the sparrow
positions $\{X_j\}_{j=1}^N$.

\subsubsection{Discrete initial data}

In the finite-$N$ setting, the initial sentiment field is concentrated
at the sparrow locations. We define the initial datum as a sum of
short-time Mehler kernels,
\begin{equation}\label{eq:datum}
u(\cdot, 0) := \frac{e^{-w/2}}{N} \sum_{j=1}^N
K_{h_0^2/2}(\cdot, X_j; \omega),
\end{equation}
where $h_0 > 0$ represents the intrinsic granularity of attention
measurements and $w$ is a
uniform initial temperature weight. For small $\omega h_0$ the smoother
$K_{h_0^2/2}$ coincides with a Gaussian of bandwidth $h_0$; defining it
as a Mehler kernel makes the composition below exact rather than
approximate.

\subsubsection{The sentiment density estimator}

Substituting \eqref{eq:datum} into \eqref{eq:CS} and using the exact
semigroup property $\int K_s(x,z) K_t(z,y)\, dz = K_{s+t}(x,y)$ of the
Mehler family, the solution at time $t$ is
\begin{equation}\label{eq:exactsol}
u(x,t) = e^{-w/2}\, \frac{e^{-(1-c)\alpha t}}{N} \sum_{j=1}^N
K_{t + h_0^2/2}(x, X_j; \omega),
\end{equation}
exactly. Absorbing the constant prefactor $e^{-w/2}$ into the overall
amplitude, we define the sentiment density estimator
\begin{equation}\label{eq:estimator}
\hat{S}(x,t) = \frac{A(t)}{N} \sum_{j=1}^N K_{t+h_0^2/2}(x, X_j;
\omega), \qquad A(t) := e^{-(1-c)\alpha t}.
\end{equation}
The recovered sentiment field is $\hat{T}(x,t) = [(1-c)
\hat{S}(x,t)]^{1/(1-c)}$.

The estimator inherits two properties directly from the PDE solution.
First, the effective bandwidth widens with $t$ as $h(t) =
\sqrt{2\tanh(\omega t_{\mathrm{eff}})/\omega}$ with $t_{\mathrm{eff}} =
t + h_0^2/2$ (equal to $\sqrt{2t}$ for small $\omega t$, saturating at
$\sqrt{2/\omega}$), encoding the diffusive spreading from $\Delta T$ and
its competition with the confining sink. Second, the amplitude $A(t) =
e^{-(1-c)\alpha t}$ decays exponentially at the rate $(1-c)\alpha$,
encoding the uniform-forgetting component of the sink; the total
$u$-mass follows the exact law of Remark~\ref{rem:mass}. Together with
the spatial confinement built into the Mehler kernel, these produce the
lifecycle of market narratives: initial concentration at sparrow
locations, diffusive spreading saturated at the Mehler length scale, and
dissipation as the sinks dominate.

\subsubsection{Consistency with the mean-field limit}

By Theorem~\ref{thm:poc}, the normalised empirical measure of the
microscopic particle system converges to the normalised linear field
$u(\cdot,t)\,dx/\!\int u$ at rate $O(N^{-1/2}\sqrt{\log N})$ in the
1-Wasserstein metric. The estimator \eqref{eq:estimator} is the standard
nonparametric smoother of the empirical measure with bandwidth chosen to
match the PDE's intrinsic Mehler scale; in the large-$N$ limit,
$\hat{T}(x,t)$ converges to the continuous field $T(x,t)$ defined by
\eqref{eq:CS}, the readout being locally Lipschitz
(Corollary~\ref{cor:functional-poc}). This representation is used in the
simulation figures (Section~\ref{sec:simulation}) to visualise the
sentiment lifecycle predicted by the governing PDE.

\section{Sentiment-Driven Dynamics of the Mean-Reversion Speed}
\label{sec:kappa}

The mean-reversion speed of the variance process is where the sentiment
field enters the pricing model, and we build it as an explicit,
observable functional of that field. In standard two-factor stochastic
volatility models the second factor is an unobserved latent diffusion
that must be filtered or backed out of option prices; here, in contrast,
the coefficients driving $\kappa_t$ are computable in real time from the
closed-form sentiment field, and in the baseline specification ($\eta_1
= 0$ below) $\kappa_t$ itself is a deterministic functional of the
field. The sentiment field $T(x,t)$ from Section~\ref{sec:pde} drives
the instantaneous variance process
\begin{equation}\label{eq:variance}
dv_t = \kappa_t(\bar\theta - v_t)\, dt + \xi\sqrt{v_t}\, dW_t^v,
\qquad v_0 > 0,
\end{equation}
where $\bar\theta > 0$ is the long-term unconditional variance, $\xi >
0$ is the volatility-of-volatility, and the scalar process $\kappa_t$,
the sentiment-driven mean-reversion speed, solves an additive-noise
Ornstein--Uhlenbeck SDE with deterministic, time-varying coefficients
derived from $T(\cdot,t)$:
\begin{equation}\label{eq:kappa-sde}
d\kappa_t = \lambda_t(\theta_t - \kappa_t)\, dt + \eta_t\, dW_t^\kappa,
\qquad \kappa_0 > 0,
\end{equation}
where $W^\kappa$ is a standard Brownian motion independent of $W^v$. The
instantaneous reversion rate $\lambda_t = \lambda(T(\cdot,t))$, the
instantaneous long-run level $\theta_t = \theta(T(\cdot,t))$, and the
instantaneous diffusivity $\eta_t = \eta(T(\cdot,t))$ are explicit
radial functionals of the sentiment field:
\begin{align}
\lambda(T(\cdot,t)) &= \lambda_1 \Big/
\frac{\int_{\R^2} T(x,t)\,|x|\, dx}{\int_{\R^2} T(x,t)\, dx},
\label{eq:lam-func}\\
\theta(T(\cdot,t)) &= \theta_1 \cdot
\frac{\int_{|x| \le r_0} T(x,t)\, dx}{\int_{\R^2} T(x,t)\, dx},
\label{eq:th-func}\\
\eta(T(\cdot,t)) &= \eta_1 \cdot
\frac{\int_{\R^2} T(x,t)\,|x|^2\, dx}{\int_{\R^2} T(x,t)\, dx},
\label{eq:eta-func}
\end{align}
with fixed constants $\lambda_1, \theta_1 > 0$, $\eta_1 \ge 0$, and
neutral radius $r_0 > 0$. The baseline specification takes $\eta_1 = 0$,
making \eqref{eq:kappa-sde} a deterministic ODE; the stochastic
extension takes $\eta_1 > 0$.

All three functionals are dimensionless, mass-normalised statistics of
the spatial distribution of attention: writing the $T$-weighted
expectation and probability as
\[
\langle f \rangle_T := \frac{\int_{\R^2} f(x)\, T(x,t)\, dx}
{\int_{\R^2} T(x,t)\, dx},
\qquad
\Pr_T(A) := \frac{\int_A T(x,t)\, dx}{\int_{\R^2} T(x,t)\, dx},
\]
the functionals are equivalently
\begin{align}
\lambda(T(\cdot,t)) &= \lambda_1 / \langle |x| \rangle_T
&&\text{(inverse typical attention radius)}, \label{eq:lam-dimless}\\
\theta(T(\cdot,t)) &= \theta_1 \cdot \Pr_T(|x| \le r_0)
&&\text{(mass fraction in neutral zone)}, \label{eq:th-dimless}\\
\eta(T(\cdot,t)) &= \eta_1 \cdot \langle |x|^2 \rangle_T
&&\text{(spatial variance of attention)}. \label{eq:eta-dimless}
\end{align}
This dimensionless form is critical: under the Gaussian decay of $T$
established in Theorem~\ref{thm:field}(iv), raw integrals $\int T
|x|^k\, dx$ scale with the total mass of $T$, which itself decays under
the sink (exactly per Remark~\ref{rem:mass} at the level of $u$).
Mass-normalisation isolates the behavioural content, namely where
attention sits geometrically, from the absolute level of attention, so
the functionals stay $O(1)$ for all $t > 0$. The interpretation is
direct: $\lambda$ rises when attention is concentrated, $\theta$ rises when attention sits near neutrality, and $\eta$
rises when attention is dispersed. The closed-form Mehler representation
\eqref{eq:CS} ensures that $\lambda_t, \theta_t, \eta_t$ are locally
Lipschitz in $t$ and locally bounded on $[0,T]$ for any horizon $T <
\infty$.

\subsection{Explicit Solution of the $\kappa_t$ Dynamics}
\label{sec:kappa-solution}

Equation \eqref{eq:kappa-sde} is a linear, additive-noise
Ornstein--Uhlenbeck equation with time-inhomogeneous deterministic
coefficients. Define the deterministic integrating factor
\begin{equation}\label{eq:phi}
\Phi_t := \exp\!\Bigl(-\int_0^t \lambda(T(\cdot,s))\, ds\Bigr), \qquad
\Phi_0 = 1,
\end{equation}
which satisfies $d\Phi_t = -\lambda_t \Phi_t\, dt$. Applying It\^o's
product rule to $\Phi_t^{-1}\kappa_t$ (the deterministic nature of
$\Phi$ eliminates any covariation terms) and integrating yields the
explicit strong solution
\begin{equation}\label{eq:kappa-voc}
\kappa_t = \Phi_t\, \kappa_0
+ \int_0^t \frac{\Phi_t}{\Phi_s}\, \lambda_s \theta_s\, ds
+ \int_0^t \frac{\Phi_t}{\Phi_s}\, \eta_s\, dW_s^\kappa.
\end{equation}
In the baseline $\eta_1 = 0$, the stochastic integral vanishes and
\eqref{eq:kappa-voc} is a deterministic functional of the sentiment
field. Moreover the ODE preserves positivity: since $d\kappa_t \ge 0$
whenever $\kappa_t \le \inf_s \theta_s$ and $\lambda \ge 0$, we have
$\kappa_t \ge \min(\kappa_0, \inf_{s \le t}\theta_s) > 0$.

\begin{theorem}\label{thm:kappa}
The process defined by \eqref{eq:kappa-voc} is the unique strong
solution of \eqref{eq:kappa-sde}. Moreover, the coefficients $(\lambda,
\theta, \eta)$ being deterministic, $\kappa_t$ is Gaussian with mean
\[
\E[\kappa_t] = \Phi_t \kappa_0 + \int_0^t \frac{\Phi_t}{\Phi_s}
\lambda_s \theta_s\, ds
\]
and variance
\[
\mathrm{Var}[\kappa_t] = \int_0^t \frac{\Phi_t^2}{\Phi_s^2}\,
\eta_s^2\, ds
\]
In the baseline $\eta_1 = 0$ the law is degenerate at the deterministic
path.
\end{theorem}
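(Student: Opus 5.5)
The plan is to treat \eqref{eq:kappa-sde} as a linear SDE with deterministic, locally bounded coefficients and proceed in three steps: verify the formula, prove uniqueness, then read off the law.

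\emph{Step 1: regularity of the coefficients.} By the closed-form representation \eqref{eq:CS}, $t \mapsto (\lambda_t,\theta_t,\eta_t)$ is locally bounded and measurable on $[0,\infty)$, as noted after \eqref{eq:eta-dimless}. Proposition~\ref{prop:w1-stability} supplies the uniform bounds on the ratios. Hence $\Phi_t$ in \eqref{eq:phi} is absolutely continuous, strictly positive and bounded away from zero on compacts. It follows that $\int_0^t \Phi_s^{-2}\eta_s^2\,ds<\infty$, so the Wiener integral in \eqref{eq:kappa-voc} is well defined.

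\emph{Step 2: existence.} Write $\kappa_t = \Phi_t Y_t$ with
\[
Y_t := \kappa_0 + \int_0^t \Phi_s^{-1}\lambda_s\theta_s\,ds + \int_0^t \Phi_s^{-1}\eta_s\,dW^\kappa_s .
\]
Since $\Phi$ is of finite variation and deterministic, It\^o's product rule has no covariation term. It gives
\[
d\kappa_t = -\lambda_t\Phi_t Y_t\,dt + \Phi_t\,dY_t = -\lambda_t\kappa_t\,dt + \lambda_t\theta_t\,dt + \eta_t\,dW^\kappa_t ,
\]
which is \eqref{eq:kappa-sde}. The solution is adapted to the filtration of $W^\kappa$ and has continuous paths, so it is a strong solution.

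\emph{Step 3: uniqueness.} This is the only step needing care, since the drift is affine in $\kappa$ with a merely locally bounded rate. Take two strong solutions $\kappa,\kappa'$ on the same probability space with the same initial value. Their difference $D_t$ satisfies the random ODE $dD_t = -\lambda_t D_t\,dt$ with $D_0=0$, because the noise cancels. Pathwise this gives $D_t = \Phi_t D_0 = 0$; alternatively, Gr\"onwall's inequality applied to $|D_t|$ with $\sup_{s\le t}\lambda_s<\infty$ gives the same conclusion. This argument requires no Lipschitz growth assumptions beyond local boundedness of $\lambda$.

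\emph{Step 4: the law.} In \eqref{eq:kappa-voc} the first two terms are deterministic. The last term is a Wiener integral of the deterministic integrand $s\mapsto (\Phi_t/\Phi_s)\eta_s$, which lies in $L^2[0,t]$. Such an integral is centred Gaussian with variance $\int_0^t (\Phi_t/\Phi_s)^2\eta_s^2\,ds$ by the It\^o isometry. Gaussianity follows because it is an $L^2$ limit of Gaussian Riemann sums, or equivalently from the characteristic function of a Wiener integral. This yields the stated mean and variance. When $\eta_1=0$ we have $\eta\equiv 0$, the variance vanishes, and the law is the Dirac mass at the deterministic path.
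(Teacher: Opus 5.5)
Your proposal is correct and follows the paper's proof. Existence comes from It\^o's product rule with the deterministic finite-variation factor $\Phi$; the paper expands $\Phi_t^{-1}\kappa_t$ and then runs the computation in reverse, which is exactly your Step~2. Uniqueness comes from the additive noise cancelling in the difference of two solutions, and the Gaussian law with the stated mean and variance comes from \eqref{eq:kappa-voc} being a Wiener integral of a deterministic integrand. The appeal to Proposition~\ref{prop:w1-stability} in Step~1 is unnecessary, since only local boundedness of $(\lambda_t,\theta_t,\eta_t)$ in $t$ is used.
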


\begin{proof}
Existence and uniqueness are established directly: the explicit form
\eqref{eq:kappa-voc} is verified by applying It\^o's formula to
$\Phi_t^{-1}\kappa_t$, and uniqueness follows because the difference of
any two solutions satisfies a deterministic linear ODE (the additive
noise and $\kappa$-linear drift make the stochastic terms cancel
exactly). The Gaussian law is then immediate, since \eqref{eq:kappa-voc}
is a deterministic function of time plus a Wiener integral against a
deterministic integrand. See Appendix~\ref{app:kappa} for the complete,
self-contained argument.
\end{proof}

\section{Sentiment-Driven Stochastic Volatility}
\label{sec:sdsv}

We can now assemble the full model: a tractable two-factor stochastic
volatility system whose variance mean-reversion speed is an explicit,
observable functional of the spatial distribution of market attention,
available in closed form.

Let $(\Omega, \mathcal{F}, \{\mathcal{F}_t\}_{t \ge 0}, \Prob)$ be a
complete filtered probability space satisfying the usual conditions and
supporting standard Brownian motions $W^S, W^v, W^\kappa$ with $d\langle
W^S, W^v \rangle_t = \rho_{Sv}\, dt$ for a fixed $\rho_{Sv} \in [-1,1]$,
and $W^\kappa$ independent of $(W^S, W^v)$.

\begin{remark}[Pathwise observability of $\kappa_t$]\label{rem:observable}
A process is pathwise observable if it is $\mathcal{F}_t$-measurable and
computable exactly from information available at time $t$, without
filtering or latent-state estimation. In the baseline model ($\eta_1 =
0$), $\kappa_t$ is pathwise observable: it is a deterministic functional
of the closed-form sentiment field \eqref{eq:CS} --- itself the
deterministic mean-field limit of the stochastic swarm
(Section~\ref{sec:swarm}), not a random object in its own right --- and
is estimable in real time from the observed (genuinely random) swarm
positions via the estimator \eqref{eq:estimator} and
\eqref{eq:kappa-voc}, with vanishing stochastic integral in this
baseline and finite-$N$ estimation error controlled explicitly by
Corollary~\ref{cor:functional-poc}. The contrast with Heston is not
about randomness disappearing: it is that no \emph{unobserved} state
needs to be inferred here, only a direct formula applied to observed
data. In the stochastic extension ($\eta_1 >
0$), the drivers of $\kappa_t$, the coefficients $(\lambda_t, \theta_t,
\eta_t)$, remain observable functionals of the sentiment field, while
$\kappa_t$ itself carries an idiosyncratic innovation $W^\kappa$. This
contrasts with standard two-factor models in which the second factor
governing mean-reversion speed, including its coefficients, is an
unobserved latent diffusion that must be inferred from option prices or
realised variance (Heston, 1993).
\end{remark}

\subsection{The Sentiment-Driven Variance Process}
\label{sec:variance-process}

Pulling the construction together, the spot variance $v_t \ge 0$ evolves
according to the variance SDE \eqref{eq:variance} with mean-reversion
speed $\kappa_t$ given by \eqref{eq:kappa-voc}, sentiment functionals
\eqref{eq:lam-func}-\eqref{eq:eta-func} computed from the closed-form
sentiment field \eqref{eq:CS}, fixed long-term mean $\bar\theta > 0$,
and volatility-of-volatility $\xi > 0$. This construction defines the
core of the Sentiment-Driven Stochastic Volatility (SDSV) model.

\begin{theorem}\label{thm:wellposed}
The system \eqref{eq:variance} coupled with \eqref{eq:kappa-voc} and
\eqref{eq:lam-func}-\eqref{eq:eta-func} admits a unique strong solution
$(v_t, \kappa_t)_{t \ge 0}$ on $[0,\infty)$, with $v_t \ge 0$ almost
surely for all $t \ge 0$. If in addition the mean-reversion speed is
floored at
\[
\kappa_t \ge \kappa_* := \frac{\xi^2}{2\bar\theta}
\qquad \text{for all } t \ge 0 \text{ almost surely}
\]
(e.g.\ by replacing $\kappa_t$ with $\tilde\kappa_t := \max(\kappa_t,
\kappa_*)$ in \eqref{eq:variance}, or by calibrating $\xi \le
\sqrt{2\kappa_{\min}\bar\theta}$ for an economic floor $\kappa_{\min}$),
then $v_t > 0$ almost surely for all $t \ge 0$.
\end{theorem}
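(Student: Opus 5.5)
The plan is to decouple the system. Since $W^\kappa$ is independent of $W^v$, and the coefficients $(\lambda_t,\theta_t,\eta_t)$ are deterministic functions of $t$ (functionals of the closed-form field \eqref{eq:CS}), the process $\kappa_t$ is constructed first and separately. By Theorem~\ref{thm:kappa}, \eqref{eq:kappa-voc} is the unique strong solution of \eqref{eq:kappa-sde}, and its paths are continuous and adapted. I will first record that $\lambda_t,\theta_t,\eta_t$ are locally bounded and measurable on $[0,\infty)$. For $t>0$ this follows from the Gaussian bounds of Theorem~\ref{thm:field}(iv). It also follows uniformly near $t=0$ because, for the estimator \eqref{eq:estimator}, $t_{\mathrm{eff}}\ge h_0^2/2$, so the ratios in \eqref{eq:lam-func}--\eqref{eq:eta-func} have denominators bounded below, exactly as in Steps~2--4 of the proof of Proposition~\ref{prop:w1-stability}. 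Consequently $K_t:=\sup_{s\le t}|\kappa_s|<\infty$ almost surely for every $t$.

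Next, I treat \eqref{eq:variance} as a CIR equation with a random but continuous, adapted, locally bounded speed $\kappa_t$. Its drift $b(t,v)=\kappa_t(\bar\theta-v)$ is Lipschitz in $v$ with the locally bounded random constant $|\kappa_t|$. Its diffusion $\xi\sqrt{v^+}$ is H\"older-$\tfrac12$. I will apply the Yamada--Watanabe pathwise-uniqueness criterion, with $\rho(u)=\xi^2 u$ and $\int_{0+}du/\rho(u)=\infty$. To handle the random coefficient, I localise with the stopping times $\tau_n:=\inf\{t:|\kappa_t|\ge n\}$, which increase to $\infty$ almost surely. On each $[0,\tau_n]$ the coefficients are bounded, so the Yamada--Watanabe argument applies verbatim. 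Weak existence follows from continuity and linear growth (Skorokhod), and pathwise uniqueness then upgrades this to a unique strong solution. A concern is that $\kappa$ could be negative when $\eta_1>0$. This is harmless, because the drift remains Lipschitz in $v$ and nonnegativity does not use the sign of $\kappa$.

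For $v_t\ge0$, I will solve the equation with $\sqrt{v^+}$ and show that the solution never goes negative. The first method is the standard comparison and local-time argument. The drift at $v=0$ is $\kappa_t\bar\theta$, so on $\{v<0\}$ the diffusion vanishes and $v^-$ satisfies
\[
d(v^-)=-\mathbf 1_{\{v<0\}}\kappa_t(\bar\theta-v)\,dt .
\]
This step needs $\kappa_t\bar\theta\ge0$, and it holds in the baseline by the positivity remark after \eqref{eq:kappa-voc}. In the stochastic extension I would instead apply the statement to $\kappa_t^+$ in the drift, or invoke the floor, and I expect to have to flag this. Gronwall on $\E[v^-_{t\wedge\tau_n}]$ then gives $v^-\equiv0$.

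Strict positivity under $\kappa_t\ge\kappa_*$ is the main obstacle, since Feller's test is stated for time-homogeneous coefficients. I will use a Lyapunov argument. Apply It\^o to $\log v_t$ up to $\sigma_\epsilon=\inf\{t:v_t\le\epsilon\}$:
\[
d\log v=\Bigl(\frac{\kappa_t\bar\theta-\xi^2/2}{v}-\kappa_t\Bigr)dt+\frac{\xi}{\sqrt v}\,dW^v .
\]
Under $\kappa_t\bar\theta\ge\xi^2/2$ the singular drift term is nonnegative. Hence $\log v_{t\wedge\sigma_\epsilon\wedge\tau_n}\ge\log v_0-nt+M_t$ for a local martingale $M$. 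Taking expectations after a further localisation bounds $\E[-\log v_{t\wedge\sigma_\epsilon\wedge\tau_n}]$ uniformly in $\epsilon$. This bound gives $\Prob(\sigma_\epsilon\le t\wedge\tau_n)\cdot\log(1/\epsilon)\le C_{n,t}$. Letting $\epsilon\to0$, then $n,t\to\infty$, yields $v_t>0$ for all $t$ almost surely. The alternative of comparing with a homogeneous CIR process of speed $\kappa_*$ fails, because the drift is not monotone in $\kappa$ when $v>\bar\theta$. That is why I choose the log-Lyapunov route.
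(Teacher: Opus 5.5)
Your argument is sound in outline. It departs from the paper exactly at the strict-positivity step. The paper builds $\kappa$ from \eqref{eq:kappa-voc} and cites Yamada--Watanabe/Engelbert--Schmidt for a unique nonnegative strong solution. It then claims that under the floor the comparison theorem gives $v_t\ge\underline v_t$ pathwise, with $\underline v$ the homogeneous CIR$(\kappa_*,\bar\theta,\xi)$ at the Feller boundary.

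Your objection to that comparison is correct. The inequality $\kappa_t(\bar\theta-v)\ge\kappa_*(\bar\theta-v)$ holds only for $v\le\bar\theta$, so the drift ordering fails above $\bar\theta$. The pathwise inequality is then false in general: take $v_0>\bar\theta$ and $\kappa_t\gg\kappa_*$.

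The comparison route can be repaired by comparing instead with $d\underline v_t=(\xi^2/2-\kappa_t\underline v_t)\,dt+\xi\sqrt{\underline v_t}\,dW^v_t$. Its drift lies below $\kappa_t(\bar\theta-v)$ for every $v$ whenever $\kappa_t\ge\kappa_*$. Moreover, $e^{\int_0^t\kappa_s\,ds}\,\underline v_t$ is a time-changed squared Bessel process of dimension $2$, so it never hits $0$. That version gives an explicit pathwise minorant, but it still needs a comparison theorem with a random coefficient. Your $\log v$ Lyapunov argument needs no comparison and uses $\kappa_t\bar\theta\ge\xi^2/2$ only pointwise in time. That is the natural way to handle the time-inhomogeneity, and it is the more self-contained of the two routes.

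There are three points to fix.

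\emph{First}, delete the sentence saying nonnegativity does not use the sign of $\kappa$. It contradicts your own later step and is false. If $\kappa_t<0$, the drift at $0$ is $\kappa_t\bar\theta<0$. The $\sqrt{v^+}$ solution then goes negative, and the $\sqrt v$ equation admits no solution beyond that time. In the unfloored extension $\kappa_t$ is Gaussian with positive variance, so this happens with positive probability. Your later flag is therefore not a technicality. The first assertion of the theorem holds only when $\kappa\ge0$: in the baseline, under the floor, or with $\kappa^+$ in the drift. The paper's proof overlooks this restriction. When $\kappa\ge0$ no Gr\"onwall step is needed, since $v^-$ is nonincreasing from $0$.

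\emph{Second}, an upper bound on $\E[-\log v_{t\wedge\sigma_\epsilon\wedge\tau_n}]$ does not by itself give $\Prob(\sigma_\epsilon\le t\wedge\tau_n)\log(1/\epsilon)\le C_{n,t}$. You also need to control $\E[(\log v)^+]\le\E[v_{t\wedge\sigma_\epsilon\wedge\tau_n}]$. This follows from $\E[v_S]\le v_0+n\bar\theta t$ for stopping times $S\le t\wedge\tau_n$, because the drift is at most $n\bar\theta$ there. The stochastic integral stopped at $\sigma_\epsilon$ has integrand bounded by $\xi\epsilon^{-1/2}$, so it is already a true martingale and no further localisation is needed.

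\emph{Third}, Skorokhod's existence theorem is stated for deterministic coefficients. Condition on the $\kappa$-path, which is independent of $W^v$, before invoking it.
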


\begin{proof}
See Appendix~\ref{app:wellposed}. Without a floor, the model guarantees
non-negativity but not strict positivity: in the stochastic extension
$\kappa_t$ is Gaussian and takes values below the Feller level $\kappa_*
= \xi^2/(2\bar\theta)$ with positive probability, and the CIR boundary
at zero is then attainable. The simulation scheme of
Section~\ref{sec:simulation} uses full-truncation Euler (Lord et al.,
2010) precisely because it handles the boundary correctly.
\end{proof}

The volatility engine $(v_t, \kappa_t)$ is independent of the asset
price dynamics, so it can be coupled with any admissible price process.
The following nesting property is elementary but central to the
empirical interpretation of the model.

\begin{proposition}[Heston nesting]\label{prop:nesting}
Fix any $\bar\kappa > 0$. The SDSV parameter configuration $\{\eta_1 =
0,\ \kappa_0 = \bar\kappa,\ \theta_t \equiv \bar\kappa\}$ (attained,
e.g., in the frozen-functional regime of Corollary~\ref{cor:forecast}
with $\theta_t = \bar\kappa$, or by choosing $\theta_1$ so that
$\theta(T(\cdot,t)) \equiv \bar\kappa$) yields $\kappa_t \equiv
\bar\kappa$ for all $t$, and the law of $(S_t, v_t)$ coincides exactly
with that of the constant-$\kappa$ Heston model $(\bar\kappa,
\bar\theta, \xi, \rho_{Sv})$. Consequently, for any calibration
objective $J$ (e.g.\ squared error to an observed implied-volatility
surface or to realised-variance moments),
\[
\min_{\text{SDSV parameters}} J \;\le\; \min_{\text{Heston parameters}}
J:
\]
a calibrated SDSV model can never fit worse in sample than a calibrated
Heston model. In the worst case, calibration drives the sentiment
sensitivity to zero and recovers Heston exactly.
\end{proposition}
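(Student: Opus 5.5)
The plan is to argue in three steps: first show that $\kappa_t$ is constant, then identify the law of $(S_t,v_t)$ with Heston's, then deduce the calibration inequality by an inclusion of parameter sets.

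\emph{Step 1: $\kappa_t$ is constant.} Under $\eta_1 = 0$ we have $\eta_t \equiv 0$, so by Theorem~\ref{thm:kappa} the solution \eqref{eq:kappa-voc} is deterministic. Substituting $\kappa_0 = \bar\kappa$ and $\theta_s \equiv \bar\kappa$ gives
\[
\kappa_t = \Phi_t\bar\kappa + \bar\kappa\int_0^t \frac{\Phi_t}{\Phi_s}\lambda_s\, ds .
\]
Since $\frac{d}{ds}\Phi_s^{-1} = \lambda_s\Phi_s^{-1}$, the integral equals $\Phi_t(\Phi_t^{-1} - 1) = 1 - \Phi_t$. Hence $\kappa_t = \bar\kappa$ for every $t$, whatever the (positive, locally bounded) path $\lambda_s$ is. Equivalently, $\kappa \equiv \bar\kappa$ solves the ODE $\dot\kappa = \lambda_t(\bar\kappa - \kappa)$, and uniqueness in Theorem~\ref{thm:kappa} forces this solution.

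\emph{Step 2: the law of $(S_t,v_t)$ is Heston's.} With $\kappa_t \equiv \bar\kappa$, equation \eqref{eq:variance} becomes the CIR SDE with constant coefficients $(\bar\kappa,\bar\theta,\xi)$, driven by $W^v$. The price equation is the same as in Heston, with correlation $\rho_{Sv}$ between $W^S$ and $W^v$. The independent motion $W^\kappa$ no longer enters anything, because it appears multiplied by $\eta_t = 0$. Strong existence and pathwise uniqueness for this system follow from Theorem~\ref{thm:wellposed}, or from the Yamada--Watanabe theorem for the CIR component. The law of $(S,v)$ is therefore determined by the Heston SDE system and coincides with that of Heston$(\bar\kappa,\bar\theta,\xi,\rho_{Sv})$.

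\emph{Step 3: the calibration inequality.} Let $\mathcal H$ be the Heston parameter set, and let $\iota:\mathcal H \to$ SDSV parameters map $(\bar\kappa,\bar\theta,\xi,\rho_{Sv})$ to the configuration above, with the same $(\bar\theta,\xi,\rho_{Sv})$. I assume $J$ depends on parameters only through the law of $(S,v)$ (implied-volatility surfaces and variance moments are such objects). Step 2 then gives $J(\iota(p)) = J_{\mathrm{Heston}}(p)$. It follows that $\inf_{\mathrm{SDSV}} J \le \inf_{p} J(\iota(p)) = \inf_{\mathcal H} J$, and the same holds for minima when they exist. The final sentence of the statement is a rephrasing: the Heston optimum is attainable inside SDSV.

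\emph{Main obstacle.} I expect the hardest point to be admissibility of $\theta_t \equiv \bar\kappa$ inside the model class, since $\theta_t$ is a functional of the field and not a free parameter. The statement offers two routes. The frozen-functional regime is simply assumed. The other route is choosing $\theta_1$, but then $\theta(T(\cdot,t))\equiv\bar\kappa$ requires $\Pr_T(|x|\le r_0)$ to be constant in $t$, which the geometry does not generally give. I will treat this as a degenerate endpoint of the parameter set. One option is to let $r_0\to\infty$ with $\theta_1=\bar\kappa$; then $\Pr_T(|x|\le r_0)\to1$ uniformly on compact time intervals by the Gaussian bound of Theorem~\ref{thm:field}(iv). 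The inequality then holds with $\inf$ in place of $\min$, using continuity of $J$ in $\theta$ through \eqref{eq:kappa-voc}. I would also note that the inequality only needs the SDSV family to contain the Heston family, and the given configuration supplies exactly that.
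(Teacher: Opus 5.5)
Your proposal is correct and follows the paper's own proof. There too, $\eta_1=0$ and $\theta_t\equiv\bar\kappa=\kappa_0$ reduce \eqref{eq:kappa-sde} to $d\kappa_t=\lambda_t(\bar\kappa-\kappa_t)\,dt$, whose unique solution is $\kappa_t\equiv\bar\kappa$; substituting into \eqref{eq:variance} gives the Heston variance SDE with the price equation unchanged; and the calibration inequality follows because the Heston parameter set is a feasible point of the SDSV problem.

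Your additions make explicit points the paper's proof takes for granted, without changing the route. These are the explicit variation-of-constants check, the hypothesis that $J$ depends only on the law of $(S,v)$, and the correct observation that choosing $\theta_1$ alone cannot in general make $\theta_1\Pr_T(|x|\le r_0)$ constant in $t$. That last point means that outside the frozen regime you only get the inequality for infima, via a limiting argument.
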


\begin{proof}
With $\eta_1 = 0$ and $\theta_t \equiv \bar\kappa = \kappa_0$, equation
\eqref{eq:kappa-sde} reduces to $d\kappa_t = \lambda_t(\bar\kappa -
\kappa_t)\, dt$ with $\kappa_0 = \bar\kappa$, whose unique solution is
$\kappa_t \equiv \bar\kappa$. Substituting into \eqref{eq:variance}
gives exactly the Heston variance SDE; the price equation is unchanged.
The calibration inequality follows because the Heston parameter set is a
feasible point of the SDSV calibration problem.
\end{proof}

The guarantee is in-sample; added flexibility can always overfit, so
out-of-sample dominance does not follow. Section~\ref{sec:exhibit2}
examines the sentiment state's contribution within the model, leaving
its behaviour on real data to empirical work.

\subsection{Integration with Standard Asset Price Models}
\label{sec:price-models}

The asset price $S_t$ follows a diffusion of the general form
\[
dS_t = \mu(S_t, t)\, dt + \sqrt{v_t}\, \sigma(S_t, t)\, dW_t^S, \qquad
S_0 > 0,
\]
where $\mu$ and $\sigma$ satisfy local Lipschitz and linear growth
conditions, and the leverage correlation $\rho_{Sv}$ enters through
$d\langle W^S, W^v \rangle_t = \rho_{Sv}\, dt$.

\begin{theorem}\label{thm:price}
Assume $\mu$ and $\sigma$ are locally Lipschitz in their first argument
and satisfy linear growth uniformly in $t$. Then the full system admits
a unique strong solution $(S_t, v_t, \kappa_t)_{t \ge 0}$ on
$[0,\infty)$.
\end{theorem}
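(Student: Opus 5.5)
The plan is to build the solution in layers, exploiting the triangular structure of the system: $\kappa$ does not depend on $(S,v)$, $v$ does not depend on $S$, and $S$ depends on $v$ only through its diffusion coefficient. By Theorem~\ref{thm:wellposed} we already have a unique strong solution $(v_t,\kappa_t)_{t\ge0}$ with $v_t \ge 0$ a.s., adapted to the filtration generated by $(W^v,W^\kappa)$ and continuous in $t$. What remains is to treat the price equation
\[
dS_t = \mu(S_t,t)\,dt + \sqrt{v_t}\,\sigma(S_t,t)\,dW^S_t
\]
as a one-dimensional SDE with a random, adapted, continuous coefficient process $\sqrt{v_t}$. The correlation $\rho_{Sv}$ only affects the joint law of the driving noises, not the pathwise well-posedness, since $W^S$ is a fixed Brownian motion on the given space.

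\textbf{Localisation.} Because $v$ is continuous, $\sup_{s\le t} v_s<\infty$ a.s., but it is not bounded uniformly in $\omega$. I will therefore introduce stopping times
\[
\tau_n := \inf\{t : v_t \ge n\} \wedge \inf\{t : |S_t| \ge n\}.
\]
On $[0,\tau_n]$ the coefficients $b(s,t)=\mu(s,t)$ and $a(\omega,s,t)=\sqrt{v_t\wedge n}\,\sigma(s,t)$ are globally Lipschitz in $s$ after truncating $\mu,\sigma$ outside $|s|\le n$. The Lipschitz constant is $\sqrt n\,L_n$, and the coefficients are progressively measurable. The standard theorem for SDEs with random Lipschitz coefficients then gives a unique strong solution $S^{(n)}$ of the truncated equation; the proof is a Picard iteration with BDG and Gronwall, where the bounded random factor $\sqrt{v\wedge n}\le\sqrt n$ causes no trouble. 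Pathwise uniqueness also makes these solutions consistent: $S^{(n)}=S^{(m)}$ on $[0,\tau_n\wedge\tau_m]$. Patching them together defines $S$ up to $\tau_\infty:=\lim\tau_n$.

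\textbf{Non-explosion.} This is the main obstacle, because linear growth now involves the unbounded random factor $v_t$. I will handle it with a second localisation, in $v$ alone. Fix the horizon $T$ and set $\sigma_k := \inf\{t: v_t\ge k\}$. On $[0,\sigma_k\wedge\tau_n]$, Itô's formula applied to $|S|^2$, together with linear growth and BDG, gives
\[
\E\sup_{s\le t\wedge\sigma_k\wedge\tau_n}|S_s|^2 \le C_{k,T}\bigl(1+|S_0|^2\bigr),
\]
uniformly in $n$, by Gronwall. Hence $\Prob(\tau_n\le T\wedge\sigma_k)\to0$ as $n\to\infty$, that is, $S$ does not explode before $\sigma_k\wedge T$. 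Continuity of $v$ gives $\sigma_k\uparrow\infty$ a.s., so $\tau_\infty=\infty$ a.s. If $\mu,\sigma$ have linear growth only in their own right, the random multiplier is what forces this two-stage localisation rather than a single global moment bound. $\E\sup_{s\le t} v_s$ is in fact finite for CIR-type $v$, but it is not needed.

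\textbf{Uniqueness of the triple.} Let $(S',v',\kappa')$ be any other strong solution. The uniqueness part of Theorem~\ref{thm:wellposed} gives $(v',\kappa')=(v,\kappa)$ a.s. Then $S$ and $S'$ solve the same equation driven by the same coefficient process $v$. Pathwise uniqueness of the localised equations on each $[0,\tau_n\wedge\tau_n']$ gives $S=S'$ there, and letting $n\to\infty$ gives indistinguishability. This completes the existence and uniqueness of $(S_t,v_t,\kappa_t)_{t\ge0}$ on $[0,\infty)$.
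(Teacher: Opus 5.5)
Your proof is correct and follows the paper's route: you take $(v,\kappa)$ from Theorem~\ref{thm:wellposed} and then solve the price equation as a one-dimensional SDE driven by the adapted coefficient $\sqrt{v_t}$, which the paper does simply by citing Protter's theorem (Theorem~\ref{thm:protter}). Your two-stage localisation proves that citation by hand, and it is more careful than the paper. The composite coefficient $\sqrt{v_t}\,\sigma(x,t)$ satisfies (L) and (G) only with the random, locally bounded constants $\sqrt{v_t}\,L_R$ and $\sqrt{v_t}\,K$, not with the deterministic constants the appendix statement requires. The only fix needed is cosmetic: define $\tau_n$ from the truncated solution $S^{(n)}$ rather than from the not-yet-constructed $S$.
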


\begin{proof}
By Theorem~\ref{thm:wellposed}, $(v_t, \kappa_t)$ exists uniquely with
$v_t \ge 0$ almost surely. The price SDE with coefficients of locally
Lipschitz / linear growth type and adapted non-negative diffusion
$\sqrt{v_t}\,\sigma(S_t, t)$ admits a unique strong solution by
Theorem~\ref{thm:protter} in Appendix~\ref{app:price}.
\end{proof}

We illustrate three canonical price processes.

\paragraph{Geometric Brownian Motion.}
$dS_t = \mu S_t\, dt + \sqrt{v_t}\, S_t\, dW_t^S$, with coefficients
that are locally Lipschitz with linear growth, so
Theorem~\ref{thm:price} applies.

\paragraph{Ornstein--Uhlenbeck log-prices.}
$X_t = \log S_t$ with $dX_t = a(m - X_t)\, dt + \sqrt{v_t}\, dW_t^X$,
with globally Lipschitz and bounded coefficients.

\paragraph{Exponential L\'evy with stochastic volatility.}
$dS_t = \mu S_{t-}\, dt + \sqrt{v_t}\, S_{t-}\, dW_t^S + S_{t-}\, dJ_t$
where $J$ is a compound Poisson process with i.i.d.\ log-normal jumps
independent of $(W^S, W^v, W^\kappa)$. Pathwise uniqueness and
positivity hold by pathwise-uniqueness results for SDEs with jumps
(Protter, 2005, Ch.~V).

\subsection{Application to Option Pricing Frameworks}
\label{sec:pricing}

\subsubsection{Monte Carlo pricing under the risk-neutral measure}
\label{sec:mc-pricing}

Under the risk-neutral measure $\Q$ constructed in
Section~\ref{sec:risk-neutral}, the European call price with strike $K$
and maturity $T$ is $C(0) = \E^{\Q}[e^{-rT}(S_T - K)^+]$. Simulation
proceeds by: updating the ESSO swarm at information arrival times to
obtain $T(\cdot, t_i)$; evaluating the radial integrals
\eqref{eq:lam-func}-\eqref{eq:eta-func} via Mehler-kernel quadrature;
advancing $\kappa_t$ exactly via \eqref{eq:kappa-voc}; discretising
$v_t$ using the full-truncation Euler scheme (Lord et al., 2010); and
simulating $S_t$ under the risk-neutral drift.

\begin{proposition}\label{prop:mc}
The estimator $\hat{C}_N = N^{-1}\sum_{i=1}^N e^{-rT}(S_T^{(i)} - K)^+$
converges almost surely to $C(0)$ as $N \to \infty$, with a central
limit theorem whenever $\E^{\Q}[S_T^2] < \infty$.
\end{proposition}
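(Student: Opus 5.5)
The plan is to treat $\hat C_N$ as an ordinary sample mean of i.i.d.\ copies of the discounted payoff $Y := e^{-rT}(S_T - K)^+$ under $\Q$. Then the almost-sure convergence is the strong law of large numbers and the fluctuation statement is the classical Lindeberg--L\'evy central limit theorem. The key preliminary is to fix what the $N$ paths are. Each path $(S^{(i)}, v^{(i)}, \kappa^{(i)})$ is generated from independent copies of the driving noises $(W^S, W^v, W^\kappa)$, and in the exponential-L\'evy case also of the jump process $J$. The sentiment field $T(\cdot,t)$, and hence the coefficients $(\lambda_t,\theta_t,\eta_t)$, are common deterministic inputs: they are the closed-form field \eqref{eq:CS}, not random. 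By Theorem~\ref{thm:price}, applied under $\Q$ (whose structure-preservation is taken from Section~\ref{sec:risk-neutral}), the strong solution is a measurable functional of the driving noise. The payoffs $Y^{(i)}$ are therefore i.i.d.\ with the law of $Y$ under $\Q$.

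The first step is integrability. Since $0 \le Y \le e^{-rT} S_T$, it suffices that $\E^\Q[S_T] < \infty$. Under the risk-neutral measure the discounted price $e^{-rt}S_t$ is a nonnegative local martingale, hence a supermartingale. This gives $\E^\Q[S_T] \le e^{rT} S_0 < \infty$, and so $\E^\Q|Y| \le S_0$ and $\E^\Q[Y] = C(0)$ by definition of the price. Kolmogorov's strong law then yields $\hat C_N \to C(0)$ $\Q$-a.s.

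The second step is the CLT. Under $\E^\Q[S_T^2] < \infty$ we have $\E^\Q[Y^2] \le e^{-2rT}\E^\Q[S_T^2] < \infty$. Hence $\sqrt N(\hat C_N - C(0)) \Rightarrow \mathcal N(0, \mathrm{Var}^\Q(Y))$. By Slutsky's lemma, the sample variance can replace $\mathrm{Var}^\Q(Y)$ to give asymptotic confidence intervals.

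The main obstacle is the preliminary, and I do not expect any analytic difficulty beyond it. It has two parts. The first is making precise that the $N$ paths are genuinely i.i.d.: the ESSO swarm must be held fixed, or replaced by its deterministic mean-field limit, rather than re-sampled jointly with the price paths. The second is that the estimator is of the exact payoff, not of a discretised one. If $S_T^{(i)}$ comes from the full-truncation Euler scheme, the SLLN argument delivers convergence to the discretised expectation, and a separate weak-convergence result for the scheme (Lord et al., 2010) is needed as the step size vanishes. I would state the proposition for exact sampling of $S_T$, which is what the i.i.d.\ argument above covers, and add this remark about the discretisation bias.
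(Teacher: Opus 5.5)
Your proof is correct. Its skeleton matches the paper's: the domination $0\le Y\le e^{-rT}S_T$, then the strong law, then the classical CLT under the stated second-moment hypothesis. You obtain the integrability step by a different and lighter argument, though.

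The paper gets $\E^{\Q}[S_T]<\infty$ from Proposition~\ref{prop:martingale}, that is, from the \emph{true} $\Q$-martingale property of $e^{-rt}S_t$. It proves that property by conditioning on the sentiment $\sigma$-field $\mathcal G$ and invoking the Andersen--Piterbarg moment criterion. That route needs $\rho_{Sv}\le 0$ and local boundedness of $\kappa^{\Q}$ on top of Assumption~\ref{ass:mpr}.

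You use only that a nonnegative local martingale is a supermartingale, which gives $\E^{\Q}[e^{-rT}S_T]\le S_0$. This requires nothing beyond the existence of $\Q$ and the risk-neutral price dynamics of Proposition~\ref{prop:Q-dynamics}. It therefore holds for either sign of $\rho_{Sv}$, and it is all the convergence claim needs, since $C(0)$ is simply \emph{defined} as $\E^{\Q}[Y]$. The paper's heavier route buys the equality $\E^{\Q}[e^{-rT}S_T]=S_0$. That matters for reading $C(0)$ as an arbitrage-free price, but not for the consistency of $\hat C_N$.

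Your two preliminary caveats make explicit points the paper's proof leaves unaddressed, even though its own recipe in Section~\ref{sec:mc-pricing} updates the ESSO swarm and discretises $v_t$ by full-truncation Euler. The first is that the paths must be i.i.d.\ draws of the payoff whose mean is the intended $C(0)$. Resampling a finite swarm per path would still give i.i.d.\ draws, but of a target shifted by the finite-$N$ error of Corollary~\ref{cor:functional-poc}. The second is that an Euler-based estimator converges to the discretised expectation rather than the exact one. Both sharpen the statement rather than weaken your argument.
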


\begin{proof}
$(S_t, v_t)$ is pathwise continuous by Theorem~\ref{thm:wellposed}, and
$0 \le e^{-rT}(S_T - K)^+ \le e^{-rT}S_T$, so the payoff is integrable
by Proposition~\ref{prop:martingale} and the strong law applies. The
CLT needs the further condition $\E^{\Q}[S_T^2] < \infty$, which is not
automatic for Heston-type models: Proposition~\ref{prop:mc2} in
Appendix~\ref{app:montecarlo} verifies it at the calibration of
Table~\ref{tab:params}, via the moment-explosion criterion of Andersen
and Piterbarg (2007), and gives the complete argument.
\end{proof}

\subsubsection{Conditional volatility forecasting}
\label{sec:forecasting}

Given $\mathcal{F}_t$ and conditional on the path of $\kappa$,
\[
\E\bigl[v_{t+\Delta} \,\big|\, \mathcal{F}_t, (\kappa_s)_{s \le
t+\Delta}\bigr] = \bar\theta + (v_t - \bar\theta)\,
\exp\!\Bigl(-\int_t^{t+\Delta} \kappa_s\, ds\Bigr).
\]
Because $W^\kappa \perp W^v$ and, in the frozen-coefficient regime,
$\int \kappa_s\, ds$ is Gaussian, the tower property produces a fully
explicit forecast that uses only time-$t$ information.

\begin{corollary}[Exact frozen-coefficient forecast]\label{cor:forecast}
Suppose no information events arrive on $[t, t+\Delta]$, so that
$\lambda_s, \theta_s, \eta_s$ remain frozen at their time-$t$ values
with $\lambda_t \neq 0$. Then $\int_t^{t+\Delta}\kappa_s\, ds$ is
Gaussian with mean and variance
\[
m = \theta_t\Delta + (\kappa_t - \theta_t)\,
\frac{1 - e^{-\lambda_t\Delta}}{\lambda_t},
\qquad
s^2 = \frac{\eta_t^2}{\lambda_t^2}\Bigl[\Delta -
\frac{2(1 - e^{-\lambda_t\Delta})}{\lambda_t} +
\frac{1 - e^{-2\lambda_t\Delta}}{2\lambda_t}\Bigr],
\]
and the conditional variance forecast is exact:
\[
\E[v_{t+\Delta} \mid \mathcal{F}_t] = \bar\theta + (v_t - \bar\theta)\,
e^{-m + s^2/2}.
\]
In the baseline $\eta_1 = 0$, $s^2 = 0$ and the mean-path formula is
exact without correction. An explicit analogue holds when $\lambda_t =
0$.
\end{corollary}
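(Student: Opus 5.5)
The proof has three stages: solve the frozen OU equation for $\kappa$ on $[t,t+\Delta]$, integrate that solution in time, and then take the CIR conditional mean and average over $\kappa$ using the tower property.

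\emph{Solving for $\kappa$.} On $[t,t+\Delta]$ the coefficients are constants $\lambda:=\lambda_t$, $\theta:=\theta_t$ and $\eta:=\eta_t$. Equation \eqref{eq:kappa-voc}, restarted at time $t$, gives for $s\in[t,t+\Delta]$
\[
\kappa_s=\theta+(\kappa_t-\theta)e^{-\lambda(s-t)}+\eta\int_t^s e^{-\lambda(s-r)}\,dW^\kappa_r .
\]
This is Theorem~\ref{thm:kappa} with $\Phi_s/\Phi_t=e^{-\lambda(s-t)}$.

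\emph{The time integral.} Integrating in $s$, the deterministic part contributes $\theta\Delta+(\kappa_t-\theta)(1-e^{-\lambda\Delta})/\lambda$, which is $m$. For the stochastic part, apply stochastic Fubini to the double integral. This turns it into the single Wiener integral
\[
\frac{\eta}{\lambda}\int_t^{t+\Delta}\bigl(1-e^{-\lambda(t+\Delta-r)}\bigr)\,dW^\kappa_r .
\]
It is Gaussian, has mean zero, and is independent of $\mathcal F_t$. By the It\^o isometry its variance is $(\eta^2/\lambda^2)\int_0^\Delta(1-e^{-\lambda u})^2\,du$. Expanding the square,
\[
\int_0^\Delta(1-e^{-\lambda u})^2\,du=\Delta-\frac{2(1-e^{-\lambda\Delta})}{\lambda}+\frac{1-e^{-2\lambda\Delta}}{2\lambda},
\]
which gives $s^2$. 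Since $\kappa_t$ is $\mathcal F_t$-measurable, the integral $\int_t^{t+\Delta}\kappa_s\,ds$ is conditionally Gaussian $N(m,s^2)$ given $\mathcal F_t$.

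\emph{The forecast.} Condition on $\mathcal F_t$ together with the path of $\kappa$. Because $W^\kappa\perp W^v$, the variance SDE \eqref{eq:variance} is a CIR equation with a known time-varying speed. Take expectations in $d(e^{\int_t^s\kappa}(v_s-\bar\theta))=e^{\int_t^s\kappa}\xi\sqrt{v_s}\,dW^v_s$. This gives the displayed conditional formula $\bar\theta+(v_t-\bar\theta)e^{-\int_t^{t+\Delta}\kappa_s\,ds}$. Applying the tower property and the Gaussian moment generating function, $\E[e^{-X}]=e^{-m+s^2/2}$ for $X\sim N(m,s^2)$, yields the stated forecast. When $\eta_1=0$ we have $s^2=0$ and the result is immediate. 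When $\lambda_t=0$, the same computation gives $m=\kappa_t\Delta$ and $s^2=\eta_t^2\Delta^3/3$.

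\emph{Main obstacle.} The delicate step is justifying that the stochastic integral term is a true martingale once the $\kappa$ path is fixed. If $\kappa$ takes negative values on the path (possible when $\eta_1>0$), then $v$ may grow, and we need $\E\int e^{2\int\kappa}v_s\,ds<\infty$. I would handle this by localization. Stop $v$ at a level $n$ and use the conditional linear ODE for $\E v_{s\wedge\tau_n}$, whose bound is uniform in $n$ because the $\kappa$ path is continuous and hence bounded on a compact interval. Then pass $n\to\infty$ by monotone or dominated convergence. The final tower step needs integrability of $e^{-X}$, and this holds automatically for a Gaussian $X$.
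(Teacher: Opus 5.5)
Your proposal is correct and follows the paper's proof essentially step for step: you restart the variation-of-constants solution at $t$ with $\Phi_s/\Phi_t=e^{-\lambda_t(s-t)}$, apply stochastic Fubini to get a Gaussian time integral, and then use $W^\kappa\perp W^v$, the tower property and the Gaussian moment generating function. You also fill in points the paper leaves implicit: the It\^o-isometry computation of $s^2$, the localization that justifies the conditional CIR mean, and the explicit $\lambda_t=0$ case $m=\kappa_t\Delta$, $s^2=\eta_t^2\Delta^3/3$.
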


\begin{proof}
See Appendix~\ref{app:kappa}. We also verified this numerically against
Monte Carlo at the calibration of Table~\ref{tab:params}: analytic
$0.075655$ vs.\ simulated $0.075664 \pm 0.000111$.
\end{proof}

\subsubsection{Integrated variance and a drawdown-risk gauge}
\label{sec:drawdown}

Two further explicit objects follow from the same construction and only
time-$t$ information.

\begin{corollary}[Conditional integrated variance]\label{cor:IV}
Under the frozen-coefficient regime of Corollary~\ref{cor:forecast}, the
conditional expected integrated variance over $[t, t+\Delta]$ is
\[
\mathrm{IV}_t(\Delta) := \E\Bigl[\int_t^{t+\Delta} v_s\, ds \,\Big|\,
\mathcal{F}_t\Bigr] = \bar\theta\Delta + (v_t - \bar\theta)
\int_0^\Delta e^{-m(u) + s^2(u)/2}\, du,
\]
where $m(u)$ and $s^2(u)$ are the mean and variance of $\int_t^{t+u}
\kappa_r\, dr$ from Corollary~\ref{cor:forecast} evaluated at horizon
$u$. The integral is a one-dimensional quadrature of an explicit smooth
integrand; when $\kappa$ is frozen at a constant value $\kappa$ (the
nested Heston case, or the short-window approximation) it is fully
closed-form: $\mathrm{IV}_t(\Delta) = \bar\theta\Delta + (v_t -
\bar\theta)\bigl(1 - e^{-\kappa\Delta}\bigr)/\kappa$.
\end{corollary}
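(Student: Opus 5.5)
The plan is to start from the conditional forecast identity of Section~\ref{sec:forecasting}, integrate it in time, and exchange conditional expectation and time integral.

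\emph{Step 1: integrate the path-conditional mean.} Fix $t$ and the frozen coefficients $(\lambda_t,\theta_t,\eta_t)$. Conditional on $\mathcal{F}_t$ and on the whole path $(\kappa_r)_{r\le t+\Delta}$, the variance $v$ is a CIR process with deterministic time-varying speed. For each $u\in[0,\Delta]$ this gives
\[
\E\bigl[v_{t+u}\mid\mathcal{F}_t,(\kappa_r)\bigr]=\bar\theta+(v_t-\bar\theta)\exp\Bigl(-\int_t^{t+u}\kappa_r\,dr\Bigr).
\]
The identity follows by taking expectations in the linear ODE for the mean. The stochastic integral $\int\xi\sqrt{v}\,dW^v$ is a true martingale because $\E\int v_s\,ds<\infty$. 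Since $W^\kappa\perp W^v$, conditioning on the $\kappa$-path does not alter the law of $W^v$.

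\emph{Step 2: remove the $\kappa$-conditioning.} Apply the tower property to the Step~1 identity. By Corollary~\ref{cor:forecast}, applied at horizon $u$ instead of $\Delta$, the variable $\int_t^{t+u}\kappa_r\,dr$ is Gaussian given $\mathcal{F}_t$ with mean $m(u)$ and variance $s^2(u)$. The Gaussian moment generating function then gives
\[
\E[v_{t+u}\mid\mathcal{F}_t]=\bar\theta+(v_t-\bar\theta)e^{-m(u)+s^2(u)/2}.
\]
This is exactly the forecast formula of Corollary~\ref{cor:forecast} with $\Delta$ replaced by $u$, so that corollary can be invoked directly.

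\emph{Step 3: Fubini.} Write $\mathrm{IV}_t(\Delta)=\int_0^\Delta\E[v_{t+u}\mid\mathcal{F}_t]\,du$. This interchange is justified by conditional Fubini, because $v\ge0$ (Theorem~\ref{thm:wellposed}) and the integrand is nonnegative. Substituting Step~2 yields the stated formula. The integrand $u\mapsto e^{-m(u)+s^2(u)/2}$ is smooth, since $m$ and $s^2$ are explicit elementary functions of $u$ that are smooth on $[0,\Delta]$. Smoothness together with finiteness on the compact interval makes the quadrature well defined.

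\emph{Step 4: constant-$\kappa$ case.} When $\kappa$ is frozen at a constant value, for example with $\eta_1=0$ and $\kappa_t=\theta_t$, we have $s^2\equiv0$ and $m(u)=\kappa u$. Then $\int_0^\Delta e^{-\kappa u}\,du=(1-e^{-\kappa\Delta})/\kappa$.

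The main obstacle is justifying Step~1 when $\kappa$ can be negative, as happens in the Gaussian extension. The key point is that the exponential $e^{-\int\kappa}$ has finite expectation. I would handle this through the Gaussian moment generating function, which is finite for every real argument. This also gives integrability of $\int_0^\Delta\E[v_{t+u}\mid\mathcal{F}_t]\,du$ for the Fubini step, using the bound $\E[v_{t+u}\mid\cdot]\le\bar\theta+|v_t-\bar\theta|e^{-m+s^2/2}$, whose right-hand side is continuous in $u$.
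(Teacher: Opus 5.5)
Your proposal is correct and follows essentially the same route as the paper: use Tonelli (via $v \ge 0$) to exchange the conditional expectation with the time integral, evaluate the inner expectation with Corollary~\ref{cor:forecast} at horizon $u$, and set $m(u)=\kappa u$, $s^2\equiv 0$ to get the closed form. Your Steps~1--2 only re-derive that corollary, which the paper cites directly, and your extra integrability bound is not needed because the nonnegative integrand already justifies the exchange.
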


\begin{proof}
See Appendix~\ref{app:kappa}.
\end{proof}

Define the horizon-$\Delta$ drawdown from the current level, $D_{t,
\Delta} := -\min_{0 \le u \le \Delta} \log(S_{t+u}/S_t)$, the largest
decline below the time-$t$ price over the window. (This differs from the
peak-to-trough maximum drawdown, which dominates it, so exceedance
probabilities for $D_{t,\Delta}$ are lower bounds for peak-to-trough
exceedances.)

\begin{lemma}[Classical first-passage law]
\label{lem:firstpassage}
Let $X_u = au + \sigma B_u$ with $a \in \R$, $\sigma > 0$. For every $x
> 0$ and $\Delta > 0$,
\[
\Prob\Bigl(\min_{0 \le u \le \Delta} X_u \le -x\Bigr)
= \Phi\Bigl(\frac{-x - a\Delta}{\sigma\sqrt{\Delta}}\Bigr)
+ e^{-2ax/\sigma^2}\,
\Phi\Bigl(\frac{-x + a\Delta}{\sigma\sqrt{\Delta}}\Bigr).
\]
\end{lemma}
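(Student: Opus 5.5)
The plan is to reduce to standard Brownian motion and use the reflection principle after a Girsanov change of measure. First scale out $\sigma$: $\min_{u \le \Delta} X_u \le -x$ iff $\min_{u\le\Delta}(\mu u + B_u) \le -b$ with $\mu := a/\sigma$ and $b := x/\sigma > 0$. By the symmetry $B \mapsto -B$ this equals $\Prob(\max_{u \le \Delta}(-\mu u + B_u) \ge b)$. So it suffices to prove, for drift $\nu := -\mu$,
\[
\Prob\Bigl(\max_{u\le\Delta} (\nu u + B_u) \ge b\Bigr) = \Phi\Bigl(\frac{-b+\nu\Delta}{\sqrt\Delta}\Bigr) + e^{2\nu b}\,\Phi\Bigl(\frac{-b-\nu\Delta}{\sqrt\Delta}\Bigr).
\]
Substituting $\nu = -a/\sigma$ and $b = x/\sigma$ gives $(-b+\nu\Delta)/\sqrt\Delta = (-x-a\Delta)/(\sigma\sqrt\Delta)$ and $e^{2\nu b} = e^{-2ax/\sigma^2}$, which is exactly the stated formula.

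For the driftless case I would use the classical reflection principle: for $y \le b$,
\[
\Prob\bigl(M_\Delta \ge b,\ B_\Delta \le y\bigr) = \Prob(B_\Delta \ge 2b - y),
\]
where $M_\Delta = \max_{u \le \Delta} B_u$. This follows from the strong Markov property at the hitting time $\tau_b$ together with the symmetry of $B$ after $\tau_b$. It gives the joint density
\[
f(m,y) = \frac{2(2m-y)}{\Delta\sqrt{2\pi\Delta}}\, e^{-(2m-y)^2/(2\Delta)}, \qquad m \ge \max(y,0).
\]

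To add the drift, apply Girsanov with density $\exp(\nu B_\Delta - \nu^2\Delta/2)$. This converts the law of $\nu u + B_u$ on $[0,\Delta]$ into Wiener measure, so
\[
\Prob\bigl(\max(\nu u + B_u) \ge b\bigr) = \E\bigl[\mathbf 1_{\{M_\Delta \ge b\}}\, e^{\nu B_\Delta - \nu^2\Delta/2}\bigr].
\]
Split this on $\{B_\Delta \ge b\}$, where $M_\Delta \ge b$ holds automatically, and on $\{B_\Delta < b,\ M_\Delta \ge b\}$. On the first event, $\E[e^{\nu B_\Delta - \nu^2\Delta/2}\mathbf 1_{\{B_\Delta \ge b\}}]$ becomes the probability that an $N(\nu\Delta,\Delta)$ variable exceeds $b$; this is $\Phi((-b+\nu\Delta)/\sqrt\Delta)$. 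On the second event, the reflection identity says that, restricted to $\{y<b\}$, $B_\Delta$ has density $\varphi_\Delta(2b-y)$. So the term equals
\[
\int_{-\infty}^{b} e^{\nu y - \nu^2\Delta/2}\,\varphi_\Delta(2b-y)\,dy.
\]
Completing the square gives $e^{\nu y - \nu^2\Delta/2}\varphi_\Delta(2b-y) = e^{2\nu b}\varphi_\Delta(2b - y + \nu\Delta)$. Substituting $z = 2b - y$ leaves $e^{2\nu b}\Prob(N(-\nu\Delta,\Delta) \ge b) = e^{2\nu b}\Phi((-b-\nu\Delta)/\sqrt\Delta)$. Adding the two pieces gives the claim.

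The main obstacle is the completing-the-square step together with the sign bookkeeping under the reflections $a \mapsto -a$. I would check the result at $a=0$, where it reduces to $2\Phi(-x/(\sigma\sqrt\Delta))$, and as $\Delta\to\infty$ with $a<0$, where it tends to $e^{-2ax/\sigma^2}\cdot 1$, which is wrong unless capped. That second limit signals I should recheck the sign: for $a<0$ the limit must be $1$. Indeed the first term tends to $1$ and the second tends to $0$, since its argument $(-x+a\Delta)/(\sigma\sqrt\Delta)\to-\infty$, so the check passes.
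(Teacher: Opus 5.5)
Your proof is correct and takes essentially the same route as the paper: a Girsanov tilt removes the drift, the driftless reflection principle gives the sub-density of the terminal value on the crossing event, and the probability splits according to whether the terminal value lies beyond the barrier (giving the first $\Phi$ term) or on the near side (giving the $e^{-2ax/\sigma^2}$ term after completing the square). The differences are cosmetic: you normalise $\sigma=1$ and flip the minimum into a maximum, whereas the paper works directly with $\tau_{-x}$ and $X_\Delta\ge -x$. You should also state the $\Delta\to\infty$ sanity check cleanly rather than leaving the self-correction in.
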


\begin{proof}
Standard; see Karatzas and Shreve (1991, \S3.5.C). A self-contained
derivation is reproduced in Appendix~\ref{app:kappa} for completeness.
(Special cases: $a = 0$ recovers the reflection principle
$2\Phi(-x/(\sigma\sqrt{\Delta}))$; $x \downarrow 0$ gives probability
1.)
\end{proof}

\begin{definition}[Sentiment-driven drawdown gauge]
\label{def:gauge}
Matching the first two conditional moments of the log-price over $[t,
t+\Delta]$, with per-unit-time drift $a = \mu -
\mathrm{IV}_t(\Delta)/(2\Delta)$ and total variance $\sigma^2\Delta =
\mathrm{IV}_t(\Delta)$ from Corollary~\ref{cor:IV}, define
\[
\mathrm{DD}_t(x, \Delta)
:= \Phi\Bigl(\frac{-x - a\Delta}{\sqrt{\mathrm{IV}_t(\Delta)}}\Bigr)
+ \exp\Bigl(-\frac{2ax\Delta}{\mathrm{IV}_t(\Delta)}\Bigr)\,
\Phi\Bigl(\frac{-x + a\Delta}{\sqrt{\mathrm{IV}_t(\Delta)}}\Bigr)
\;\approx\; \Prob\bigl(D_{t,\Delta} \ge x \mid \mathcal{F}_t\bigr).
\]
\end{definition}

The first-passage law is exact for Brownian motion with constant drift
and volatility; applied to the SDSV log-price the gauge of
Definition~\ref{def:gauge} carries two approximations. The variance path
is frozen at its conditional mean via $\mathrm{IV}_t(\Delta)$, exact for
the log-price's conditional mean and variance when $\rho_{Sv} = 0$, by
the Dambis--Dubins--Schwarz time change, though not for the full
running-minimum law, and the leverage correlation $\rho_{Sv} < 0$
fattens the left tail, so realised drawdown frequencies sit slightly
above the gauge at large $x$, while discrete daily monitoring pulls them
back down. What makes the gauge informative is not the Gaussian
functional form, which the nested Heston model shares, but the input:
after a sentiment shock, $\kappa_t$ jumps and $\mathrm{IV}_t(\Delta)$
contracts through Corollary~\ref{cor:IV}, while the constant-$\kappa$
benchmark holds IV fixed; the two models therefore assign materially
different drawdown probabilities from identical $(S_t, v_t)$, and
Section~\ref{sec:panel-e} reports how each tracks realised frequencies.

\subsubsection{Asymptotic implied volatility}
\label{sec:asymptotic-iv}

For short maturities with the sentiment state frozen (the regime of
Corollary~\ref{cor:forecast}), $\kappa_t \approx \kappa^*$ is
approximately constant and the model reduces to a Heston system, whose
characteristic function admits the semi-closed Fourier representation of
Heston (1993), enabling rapid computation of implied volatility
surfaces. For longer maturities with time-varying but deterministic
$\kappa_t$, the $\eta_1 = 0$ baseline, the affine structure is
preserved and the characteristic function follows from Riccati ODEs with
time-dependent coefficients, solved numerically or
piecewise-analytically on a grid of sentiment-update times (cf.\
piecewise-constant-parameter Heston, Mikhailov and N\"ogel, 2003); the
simulations of Section~\ref{sec:simulation}, however, use Monte Carlo
under $\Q$ throughout, so this semi-closed-form route is not exercised
in this paper's own numerical work.

Taken together, this section ties the spatial configuration of market
attention, through the attention field $T(x,t)$, directly to the
mean-reversion behaviour of stochastic volatility, all in closed form,
with constant-$\kappa$ Heston sitting inside as the nested special case.

\subsection{Risk-Neutral Measure and the Market Price of Sentiment Risk}
\label{sec:risk-neutral}

The simulations of Section~\ref{sec:simulation} price options under a
risk-neutral measure $\Q$; this subsection constructs $\Q$ explicitly
and records which parts of the model are altered by the measure change
and which are invariant. The construction has one structural consequence
worth stating up front: because the sentiment field $T(\cdot,t)$ is an
\emph{observable, data-driven object} rather than a stochastic factor
with its own driving Brownian motion, the field itself, and hence the
coefficient functionals $(\lambda_t, \theta_t, \eta_t)$ of
\eqref{eq:kappa-sde}, are identical under $\Prob$ and $\Q$. In the
baseline specification $\eta_1 = 0$ the entire path $t \mapsto \kappa_t$
is deterministic and therefore \emph{measure-invariant}: no market price
of risk attaches to the second factor at all. This collapses the
risk-premium ambiguity that afflicts latent two-factor models, in which
the drift of the unobserved factor under $\Q$ is a free object that must
be jointly identified with the factor itself from option prices.

\subsubsection{Market-price-of-risk specification}

Let $W^{S,\perp}$ denote the component of $W^S$ orthogonal to $W^v$, so
that $W^S = \rho_{Sv} W^v + \sqrt{1 - \rho_{Sv}^2}\, W^{S,\perp}$. We
specify the market prices of risk in the essentially affine class of
Cheridito, Filipovi\'c, and Kimmel (2007), adapted to the time-varying
mean-reversion speed. This specification is what defines $\Q$: without
an assumption pinning down a candidate density process and confirming
it is a genuine change of measure, there is no equivalent martingale
measure to price under, and the constructions of
Section~\ref{sec:pricing} would have nothing to substitute into.

\begin{assumption}[Market prices of risk]\label{ass:mpr}
Fix constants $\lambda_v \in \R$ (price of variance risk) and
$\gamma_\kappa \in \R$ (price of sentiment risk), and assume the
mean-reversion floor of Theorem~\ref{thm:wellposed} is in force, so that
$\kappa_t \ge \kappa_* = \xi^2/(2\bar\theta)$ and $v_t > 0$ almost
surely. Define the market-price-of-risk processes
\begin{equation}\label{eq:mpr}
\gamma_t^v = \frac{\lambda_v}{\xi}\sqrt{v_t}, \qquad
\gamma_t^\kappa = \gamma_\kappa\, \mathbf{1}_{\{\eta_1 > 0\}}, \qquad
\gamma_t^{S,\perp} = \frac{1}{\sqrt{1 - \rho_{Sv}^2}}
\Bigl(\frac{\mu - r}{\sqrt{v_t}} - \rho_{Sv}\, \gamma_t^v\Bigr)
\end{equation}
(the last written for the GBM price specification $\mu(S,t) = \mu S$,
$\sigma(S,t) = S$; for a general admissible price diffusion replace
$(\mu - r)/\sqrt{v_t}$ by the corresponding Sharpe process), and the
candidate density process
\[
Z_t = \mathcal{E}\Bigl(-\!\int_0^\cdot \gamma_s^v\, dW_s^v
-\!\int_0^\cdot \gamma_s^{S,\perp}\, dW_s^{S,\perp}
-\!\int_0^\cdot \gamma_s^\kappa\, dW_s^\kappa\Bigr)_t.
\]
If additionally $\lambda_v \ge \kappa_* - \inf_{t \le T}\kappa_t$ (in
particular any $\lambda_v \ge 0$, and any $\lambda_v > -\kappa_*$ when
the floor binds), then $Z$ is a true martingale on $[0,T]$ and $d\Q =
Z_T\, d\Prob$ defines an equivalent measure.
\end{assumption}

The martingale property of $Z$ follows from the criterion of
Wong and Heyde (2006) for CIR-type volatility (equivalently, from
non-explosion of the auxiliary $v$-process under the candidate measure),
which is exactly the parameter restriction on $\lambda_v$ stated above;
the sentiment component contributes only a bounded,
deterministic-coefficient Gaussian exponent and never obstructs the
martingale property.

\subsubsection{Dynamics under $\Q$}

\begin{proposition}[Risk-neutral dynamics]\label{prop:Q-dynamics}
Under Assumption~\ref{ass:mpr}, the processes $W^{v,\Q} = W^v +
\int_0^\cdot \gamma_s^v\, ds$, $W^{S,\perp,\Q} = W^{S,\perp} +
\int_0^\cdot \gamma_s^{S,\perp}\, ds$, $W^{\kappa,\Q} = W^\kappa +
\int_0^\cdot \gamma_s^\kappa\, ds$ are $\Q$-Brownian motions (up to the
fixed correlation $\rho_{Sv}$ between the reconstituted $W^{S,\Q}$ and
$W^{v,\Q}$), and the SDSV system takes the same functional form under
$\Q$ with transformed coefficients:
\begin{align}
dS_t &= r\, S_t\, dt + \sqrt{v_t}\, S_t\, dW_t^{S,\Q},
\label{eq:Q-price}\\
dv_t &= \kappa_t^{\Q}\bigl(\bar\theta_t^{\Q} - v_t\bigr) dt
+ \xi\sqrt{v_t}\, dW_t^{v,\Q}, \qquad
\kappa_t^{\Q} := \kappa_t + \lambda_v, \quad
\bar\theta_t^{\Q} := \frac{\kappa_t\,\bar\theta}{\kappa_t + \lambda_v},
\label{eq:Q-var}\\
d\kappa_t &= \lambda_t\bigl(\theta_t^{\Q} - \kappa_t\bigr) dt
+ \eta_t\, dW_t^{\kappa,\Q}, \qquad
\theta_t^{\Q} := \theta_t - \frac{\eta_t\, \gamma_\kappa}{\lambda_t},
\label{eq:Q-kappa}
\end{align}
with $(\lambda_t, \theta_t, \eta_t)$ the \emph{same} observable
functionals \eqref{eq:lam-func}-\eqref{eq:eta-func} of the sentiment
field as under $\Prob$. In the baseline $\eta_1 = 0$, equation
\eqref{eq:Q-kappa} coincides with its $\Prob$-counterpart and $\kappa_t$
is the identical deterministic path under both measures, so the only
free risk-premium parameter is $\lambda_v$.
\end{proposition}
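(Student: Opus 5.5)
The plan is a direct application of Girsanov's theorem, followed by substitution into the $\Prob$-dynamics and regrouping of drift terms.

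\emph{Step 1: Girsanov.} By Assumption~\ref{ass:mpr}, $Z$ is a true martingale on $[0,T]$ and $\Q \sim \Prob$. The integrands $\gamma^v, \gamma^{S,\perp}, \gamma^\kappa$ are progressively measurable and locally square-integrable, since $v_t>0$ a.s. under the Feller floor of Theorem~\ref{thm:wellposed}. The triple $(W^v, W^{S,\perp}, W^\kappa)$ is a three-dimensional $\Prob$-Brownian motion with independent components: $W^{S,\perp}\perp W^v$ by construction and $W^\kappa \perp (W^S,W^v)$ by hypothesis. The multidimensional Girsanov theorem then shows that the shifted processes $W^{v,\Q}$, $W^{S,\perp,\Q}$, $W^{\kappa,\Q}$ form a $\Q$-Brownian motion. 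Quadratic covariations are unchanged by an absolutely continuous drift shift. Hence $W^{S,\Q} := \rho_{Sv}W^{v,\Q} + \sqrt{1-\rho_{Sv}^2}\,W^{S,\perp,\Q}$ is a $\Q$-Brownian motion with $d\langle W^{S,\Q},W^{v,\Q}\rangle = \rho_{Sv}\,dt$.

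\emph{Step 2: drift computations.}

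\textbf{Price.} Substitute $dW^S = \rho_{Sv}(dW^{v,\Q} - \gamma^v dt) + \sqrt{1-\rho_{Sv}^2}(dW^{S,\perp,\Q} - \gamma^{S,\perp}dt)$ into the GBM dynamics. The drift becomes $\mu S_t - \sqrt{v_t}S_t\bigl(\rho_{Sv}\gamma^v_t + \sqrt{1-\rho_{Sv}^2}\gamma^{S,\perp}_t\bigr)$. By the definition \eqref{eq:mpr} of $\gamma^{S,\perp}$, the bracket equals $(\mu-r)/\sqrt{v_t}$, so the drift is $rS_t$. This gives \eqref{eq:Q-price}.

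\textbf{Variance.} $\xi\sqrt{v_t}\,dW^v = \xi\sqrt{v_t}\,dW^{v,\Q} - \lambda_v v_t\,dt$. The drift is therefore $\kappa_t\bar\theta - (\kappa_t+\lambda_v)v_t$, which is $\kappa^\Q_t(\bar\theta^\Q_t - v_t)$ with the stated $\kappa^\Q_t, \bar\theta^\Q_t$. This requires $\kappa_t+\lambda_v \neq 0$, and in fact the quantity is positive under the condition $\lambda_v \ge \kappa_*-\inf\kappa$ in Assumption~\ref{ass:mpr}. This gives \eqref{eq:Q-var}.

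\textbf{Mean-reversion speed.} $\eta_t\,dW^\kappa = \eta_t\,dW^{\kappa,\Q} - \eta_t\gamma_\kappa\mathbf 1_{\{\eta_1>0\}}\,dt$. Factoring $\lambda_t$ out of the drift gives $\theta^\Q_t$; this is legitimate because $\lambda_t = \lambda_1/\langle|x|\rangle_T > 0$, with $\langle|x|\rangle_T$ finite by Theorem~\ref{thm:field}(iv). This gives \eqref{eq:Q-kappa}.

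\emph{Step 3: invariance of the coefficients.} The functionals $(\lambda_t,\theta_t,\eta_t)$ are deterministic functionals of the field $T(\cdot,t)$. That field is given by the closed form \eqref{eq:CS} from data and involves no Brownian motion of the model. A change of measure alters no deterministic object, so the coefficients are identical under $\Prob$ and $\Q$. In the baseline $\eta_1=0$ we have $\eta_t\equiv 0$ and $\gamma^\kappa\equiv 0$, so $\theta^\Q_t = \theta_t$. The ODE for $\kappa$ then coincides with its $\Prob$ version, and by Theorem~\ref{thm:kappa} it has the same unique deterministic solution \eqref{eq:kappa-voc}. The only remaining free risk-premium parameter is $\lambda_v$.

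\emph{Expected main obstacle.} The substantive issue is confirming that $Z$ is a genuine martingale, because $\gamma^{S,\perp}$ contains $1/\sqrt{v_t}$. I will rely on Assumption~\ref{ass:mpr}, which grants this. If a check is required, the first route is the Wong--Heyde non-explosion criterion. Under the candidate measure, $v$ is a CIR process with speed $\kappa_t+\lambda_v \ge \kappa_*$ and the same $\kappa_t\bar\theta$, so the Feller condition is preserved and $1/v$ remains integrable. The $\kappa$-component contributes a deterministic bounded integrand, so its stochastic exponential is a true martingale independent of the others.
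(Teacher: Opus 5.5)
Your proposal is correct and follows the paper's proof. Both apply Girsanov to $Z$, make the same three drift regroupings for $S$, $v$ and $\kappa$, and argue that $(\lambda_t,\theta_t,\eta_t)$ are unchanged because they are deterministic functionals of the observed field. Your observation that $\lambda_t=\lambda_1/\langle|x|\rangle_T>0$ strictly, with $\langle|x|\rangle_T$ finite by Theorem~\ref{thm:field}(iv), is a small sharpening: it shows the paper's separate treatment of the case $\lambda_t=0$ is never actually needed.
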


\begin{proof}
Girsanov's theorem applied to $Z$ gives the stated Brownian motions. For
the variance equation, the $\Prob$-drift plus the Girsanov correction is
$\kappa_t(\bar\theta - v_t) - \xi\sqrt{v_t}\,\gamma_t^v =
\kappa_t\bar\theta - (\kappa_t + \lambda_v)v_t$, which factors as
stated. For the price, $\gamma^{S,\perp}$ is chosen exactly so that the
total drift correction removes $\mu - r$. For $\kappa$, the correction
is $-\eta_t\gamma_\kappa$, absorbed into the level $\theta_t^{\Q}$
whenever $\lambda_t \neq 0$ (when $\lambda_t = 0$ on an interval, the
drift is the constant $-\eta_t\gamma_\kappa$ and the
variation-of-constants solution \eqref{eq:kappa-voc} applies verbatim
with that constant). The coefficients $(\lambda_t, \theta_t, \eta_t)$
are functionals of $T(\cdot,t)$, which is a deterministic transformation
of observed data, hence unchanged.
\end{proof}

\begin{proposition}[Measure-change invariants]
\label{prop:invariants}
Under Assumption~\ref{ass:mpr}, for the transformed dynamics of
Proposition~\ref{prop:Q-dynamics}, for every $t \ge 0$ and every admissible
$\lambda_v$:
\begin{enumerate}
\item[(i)] the speed--level product is invariant:
$\kappa_t^{\Q}\,\bar\theta_t^{\Q} = \kappa_t\,\bar\theta$;
\item[(ii)] the Feller threshold is invariant: the floor $\kappa_t \ge
\kappa_* = \xi^2/(2\bar\theta)$ that Theorem~\ref{thm:wellposed} uses to
guarantee strict positivity of $v$ is, by (i), the same condition under
$\Q$ as under $\Prob$ --- $2\kappa_t^{\Q}\bar\theta_t^{\Q} \ge \xi^2$
holds if and only if $2\kappa_t\bar\theta \ge \xi^2$, i.e.\ if and only
if $\kappa_t \ge \kappa_*$ --- so no $\Q$-specific recalibration of the
floor is needed.
\end{enumerate}
Consequently the floor of Theorem~\ref{thm:wellposed} is a
\emph{measure-free} structural condition: enforcing it once guarantees
the CIR boundary behaviour under both the physical and the pricing
measure, and no choice of variance risk premium can manufacture or
destroy the mean-reversion pull $\kappa_t\bar\theta$. This is also why,
in any affine representation of the model under $\Q$ (its
characteristic function, say), the coefficient $\kappa_t\bar\theta$
carries no dependence on $\lambda_v$ at all: the variance risk premium
enters only through $\kappa_t^{\Q} = \kappa_t + \lambda_v$, never
through the product $\kappa_t\bar\theta$ itself.
\end{proposition}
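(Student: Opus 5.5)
The argument is a direct algebraic verification using the definitions in Proposition~\ref{prop:Q-dynamics}, followed by a short equivalence for the Feller threshold. For (i), I would multiply the two transformed coefficients in \eqref{eq:Q-var}:
\[
\kappa_t^{\Q}\bar\theta_t^{\Q} = (\kappa_t+\lambda_v)\cdot\frac{\kappa_t\bar\theta}{\kappa_t+\lambda_v} = \kappa_t\bar\theta .
\]
This requires $\kappa_t+\lambda_v\neq 0$, which is the one point needing care.

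\textbf{Nonvanishing of $\kappa_t+\lambda_v$.} I would use the admissibility condition of Assumption~\ref{ass:mpr}, namely $\lambda_v \ge \kappa_*-\inf_{s\le T}\kappa_s$. It gives
\[
\kappa_t+\lambda_v \;\ge\; \kappa_t - \inf_{s\le T}\kappa_s + \kappa_* \;\ge\; \kappa_* \;=\; \xi^2/(2\bar\theta) \;>\; 0 .
\]
So $\kappa_t^{\Q}>0$, $\bar\theta_t^{\Q}$ is well defined and positive, and the cancellation is legitimate for every $t\le T$ and every admissible $\lambda_v$. I expect this to be the only genuine obstacle, and it is mild. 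An alternative reading of the Girsanov drift computation in the proof of Proposition~\ref{prop:Q-dynamics} would say the $\Q$-drift is $\kappa_t\bar\theta-(\kappa_t+\lambda_v)v_t$, whose constant term is $\kappa_t\bar\theta$ independently of $\lambda_v$; I would not rely on it, because it still needs $\kappa_t^{\Q}\neq0$ to write the drift in the factored form. As a side observation, $\kappa_t^{\Q}\ge\kappa_*$ also shows the $\Q$-variance process is a CIR process with positive speed.

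\textbf{Feller threshold (ii).} Under $\Q$ the variance is a time-inhomogeneous CIR process with speed $\kappa_t^{\Q}$, level $\bar\theta_t^{\Q}$ and the same vol-of-vol $\xi$, since Girsanov leaves diffusion coefficients unchanged. Its Feller quantity is therefore $2\kappa_t^{\Q}\bar\theta_t^{\Q}-\xi^2$. By (i) this equals $2\kappa_t\bar\theta-\xi^2$, so
\[
2\kappa_t^{\Q}\bar\theta_t^{\Q}\ge\xi^2 \iff 2\kappa_t\bar\theta\ge\xi^2 \iff \kappa_t\ge\kappa_*,
\]
the last step dividing by $2\bar\theta>0$. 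Strict positivity of $v$ under $\Q$ then follows from the argument of Theorem~\ref{thm:wellposed} applied verbatim to the $\Q$-dynamics, because that argument uses only the constant drift term $2\kappa_t\bar\theta$ and the diffusion coefficient $\xi$. Alternatively, $\Q\sim\Prob$ directly transfers the almost-sure statement $v_t>0$.

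\textbf{Concluding remarks.} In the affine (Riccati) representation, the $\Q$-drift $\kappa_t\bar\theta-\kappa_t^{\Q}v_t$ enters the characteristic exponent through $\kappa_t\bar\theta$ in the $\phi$-equation and $\kappa_t+\lambda_v$ in the $\psi$-equation. This shows $\lambda_v$ enters only through $\kappa_t^{\Q}$, which completes the "consequently" statements.
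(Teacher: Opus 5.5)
Your proposal is correct and follows the paper's own proof. The paper likewise cancels $(\kappa_t+\lambda_v)\cdot\kappa_t\bar\theta/(\kappa_t+\lambda_v)=\kappa_t\bar\theta$, uses the admissibility restriction of Assumption~\ref{ass:mpr} to guarantee $\kappa_t+\lambda_v>0$, and reads (ii) off from (i). Your explicit bound $\kappa_t^{\Q}\ge\kappa_*$ for $t\le T$ and the transfer of $v_t>0$ via $\Q\sim\Prob$ are sound additions that the paper leaves implicit.
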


\begin{proof}
From \eqref{eq:Q-var}, $\kappa_t^{\Q}\bar\theta_t^{\Q} = (\kappa_t +
\lambda_v) \cdot \kappa_t\bar\theta/(\kappa_t + \lambda_v) =
\kappa_t\bar\theta$, which is (i); (ii) is immediate from (i). The
admissibility restriction of Assumption~\ref{ass:mpr} guarantees
$\kappa_t + \lambda_v > 0$, so the algebra is non-degenerate.
\end{proof}

\begin{remark}[Economic content of $\bar\theta_t^{\Q}$]\label{rem:theta-q}
Under $\Q$ the long-run variance level $\bar\theta_t^{\Q} =
\kappa_t\bar\theta/(\kappa_t + \lambda_v)$ is no longer constant: it
loads on the sentiment state through $\kappa_t$. With $\lambda_v < 0$,
the empirically typical sign of the variance risk premium,
$\bar\theta_t^{\Q} > \bar\theta$, and the wedge $\bar\theta_t^{\Q} -
\bar\theta$ \emph{shrinks} when sentiment concentrates ($\kappa_t$
large). The model therefore predicts a state-dependent variance risk
premium that compresses during attention-concentrated regimes, a
testable implication that a constant-$\kappa$ Heston model cannot
generate, and one that is priced by observable data rather than by a
filtered latent state.
\end{remark}

\subsubsection{Pricing under $\Q$}

\begin{proposition}[Martingale property and pricing]\label{prop:martingale}
Under Assumption~\ref{ass:mpr} with $\rho_{Sv} \le 0$ and
$\kappa^{\Q}$ (Proposition~\ref{prop:Q-dynamics}) locally bounded (guaranteed by the local boundedness of
the functionals, Section~\ref{sec:kappa}), the discounted price
$e^{-rt}S_t$ is a true $\Q$-martingale on $[0,T]$ for the GBM and
OU-log-price specifications of Section~\ref{sec:price-models}, and
European claims are priced by $C(0) = \E^{\Q}[e^{-rT}(S_T - K)^+]$.
\end{proposition}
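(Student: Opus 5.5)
We need to show that $e^{-rt}S_t$ is a true $\Q$-martingale for the GBM and OU-log-price specifications. Under $\Q$, by Proposition~\ref{prop:Q-dynamics}, $\tilde S_t := e^{-rt}S_t = S_0\,\mathcal{E}\bigl(\int_0^\cdot \sqrt{v_s}\,dW_s^{S,\Q}\bigr)_t$. This is a positive local martingale, hence a supermartingale, so the task is to show $\E^{\Q}[\tilde S_T] = S_0$. Pricing by $C(0)=\E^{\Q}[e^{-rT}(S_T-K)^+]$ then follows from the martingale property and integrability ($0\le (S_T-K)^+\le S_T$). For the OU log-price specification I would first use the risk-neutral drift, so that the log-price again has diffusion $\sqrt{v_t}$. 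The discounted price is then the same stochastic exponential, and the argument below applies verbatim.

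The plan is the standard Heston/Andersen--Piterbarg route via a change of numéraire and non-explosion. Localize with $\tau_n := \inf\{t : v_t \ge n\}\wedge T$. On $[0,\tau_n]$ the integrand is bounded, so Novikov applies and the stopped exponential is a true martingale. Define $\Q^n$ by $d\Q^n/d\Q = \tilde S_{\tau_n}/S_0$. Write $W^{v,\Q} = \rho_{Sv}W^{S,\Q} + \sqrt{1-\rho_{Sv}^2}\,B$. By Girsanov, under $\Q^n$ and up to $\tau_n$, the variance solves
\[
dv_t = \bigl[\kappa_t^{\Q}\bar\theta_t^{\Q} - (\kappa_t^{\Q} - \rho_{Sv}\xi)v_t\bigr]dt + \xi\sqrt{v_t}\,d\widehat W_t .
\]
By Proposition~\ref{prop:invariants}(i) this equals $\kappa_t\bar\theta - (\kappa_t+\lambda_v-\rho_{Sv}\xi)v_t$. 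The key identity is
\[
\E^{\Q}[\tilde S_T] \;\ge\; \E^{\Q}[\tilde S_T\mathbf 1_{\{\tau_n = T,\ \sup_{t\le T}v_t<n\}}] \;=\; S_0\,\Q^n(\sup_{t\le T} v_t < n),
\]
so it suffices to show $\Q^n(\sup_{t\le T}v_t\ge n)\to 0$ uniformly enough as $n\to\infty$.

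For this non-explosion step I would use a comparison argument. Since $\rho_{Sv}\le0$ and $\kappa_t+\lambda_v>0$ by admissibility, the linear coefficient $\kappa_t+\lambda_v-\rho_{Sv}\xi$ is nonnegative. The constant term $\kappa_t\bar\theta$ is bounded on $[0,T]$ by $\bar\theta\sup_{t\le T}\kappa^{\Q}_t$ plus a constant, which uses the local boundedness of $\kappa^{\Q}$. In the stochastic extension, $\kappa_t$ is Gaussian rather than bounded. There I would instead condition on the $W^\kappa$-path, which is independent of $(W^S,W^v)$ and unaffected by the numéraire change since $\tilde S$ does not load on $W^\kappa$. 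This makes $\kappa$ a fixed continuous, hence bounded, path. The Ikeda--Watanabe comparison theorem then dominates $v$ by the CIR process
\[
dY_t = \bar K\,dt + \xi\sqrt{Y_t}\,d\widehat W_t
\]
with the same initial value $v_0$. Because $\E\sup_{t\le T}Y_t<\infty$ by the BDG inequality, Markov's inequality gives $\Q^n(\sup v\ge n)\le C/n\to0$. Hence $\E^{\Q}[\tilde S_T]=S_0$, and a supermartingale with constant expectation is a martingale.

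The main obstacle is this last step: making the numéraire-measure comparison rigorous with time-dependent, possibly random $\kappa$, and checking that conditioning on $W^\kappa$ commutes with the measure changes. The sign condition $\rho_{Sv}\le0$ is what prevents the $\Q^n$-drift from becoming superlinear. Without it, moment explosion in the sense of Andersen--Piterbarg could break the argument.
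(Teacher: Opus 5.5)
Your proof is correct, but it does its work in a different place from the paper's. The paper conditions on $\mathcal G=\sigma(W^\kappa_s,T(\cdot,s):s\le T)$ and argues that $\mathcal G$ stays independent of $(W^{S,\Q},W^{v,\Q})$ under $\Q$. Conditionally, $(S,v)$ is then a time-inhomogeneous Heston system with a fixed, locally bounded $\kappa^{\Q}$. The paper then cites Andersen and Piterbarg (2007, Prop.~3.1) for $\rho_{Sv}\le0$ and finishes with the tower property, extending that criterion to time-dependent $\kappa^{\Q}$ only by a monotonicity remark.

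You keep the conditioning but replace the citation by the argument behind it:
\begin{itemize}
\item the stochastic exponential is a supermartingale;
\item you localise at $\tau_n$ and pass to the share measure $\Q^n$, where the variance drift is the affine $\kappa_t\bar\theta-(\kappa^{\Q}_t-\rho_{Sv}\xi)v_t$, whose constant term is measure-invariant by Proposition~\ref{prop:invariants};
\item you compare with a constant-drift square-root process.
\end{itemize}
This is self-contained and handles the time-inhomogeneity directly. It also shows that local boundedness of $\kappa$ enters only through the constant term, and that the Feller floor matters only for constructing $\Q$.

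The obstacle you flag closes quickly. $W^{\kappa,\Q}$, $W^{S,\Q}$ and $W^{v,\Q}$ are continuous $\Q$-local martingales with $\langle W^{\kappa,\Q},W^{S,\Q}\rangle=\langle W^{\kappa,\Q},W^{v,\Q}\rangle=0$. By L\'evy's characterisation, $\kappa$ is therefore $\Q$-independent of $(W^{S,\Q},W^{v,\Q})$. Since the $\Q^n$-Girsanov kernel loads only on $W^{S,\Q}$, $W^{\kappa,\Q}$ also remains a $\Q^n$-Brownian motion. This is sturdier than the paper's factorisation $Z_T=Z^{(1)}_TZ^{(2)}_T$, which assumes $\gamma^v$ does not depend on $W^\kappa$; that fails once $\eta_1>0$, because $v$ depends on $\kappa$. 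You could also avoid conditioning altogether by additionally stopping at $\inf\{t:\kappa_t\ge m\}$, whose $\Q^n$-law does not depend on $n$.

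One correction: $\rho_{Sv}\le0$ is not what prevents a superlinear drift. For any $\rho_{Sv}$ the $\Q^n$-drift is at most $\bar K+\xi|\rho_{Sv}|v$, which is still affine. Comparison with $dY=(\bar K+\xi|\rho_{Sv}|Y)\,dt+\xi\sqrt{Y}\,d\widehat W$ then gives non-explosion regardless of sign. The sign condition only lets you drop the linear term. Andersen--Piterbarg explosion concerns moments of order above one, which matter for the CLT of Proposition~\ref{prop:mc2}, not for the martingale property.
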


\begin{proof}
Let $\mathcal G := \sigma(W_s^\kappa : s \le T) \vee \sigma(T(\cdot, s)
: s \le T)$, augmented by the $\Q$-null sets. Under $\Prob$, $W^\kappa$
and the field are independent of $(W^v, W^{S,\perp})$ by the model's
construction (the sentiment/attention factor is an exogenous process,
not driven by the asset's own innovations), so $\mathcal G$ is
$\Prob$-independent of $\sigma(W^v, W^{S,\perp}) = \sigma(W^S, W^v)$.
The density of Assumption~\ref{ass:mpr} factors as $Z_T =
Z_T^{(1)}(W^v, W^{S,\perp}) \cdot Z_T^{(2)}(W^\kappa)$, since
$\gamma^v, \gamma^{S,\perp}$ depend only on $(W^v, W^{S,\perp})$ and
$\gamma^\kappa$ only on $W^\kappa$; both factors are true martingales
with $\E^\Prob[Z_T^{(1)}] = \E^\Prob[Z_T^{(2)}] = 1$. For $\mathcal
G$-measurable bounded $f$ and $\sigma(W^v,W^{S,\perp})$-measurable
bounded $g$, independence under $\Prob$ of $(f Z_T^{(2)})$ from $(g
Z_T^{(1)})$ gives $\E^\Q[fg] = \E^\Prob[fZ_T^{(2)}]\,
\E^\Prob[gZ_T^{(1)}]$, while $\E^\Q[f] = \E^\Prob[fZ_T^{(2)}]\cdot
\E^\Prob[Z_T^{(1)}] = \E^\Prob[fZ_T^{(2)}]$ and likewise $\E^\Q[g] =
\E^\Prob[gZ_T^{(1)}]$, so $\E^\Q[fg] = \E^\Q[f]\E^\Q[g]$: $\mathcal G$
remains independent of $\sigma(W^v, W^{S,\perp})$ under $\Q$, even
though $Z_T^{(2)} \neq 1$ in general means $\mathcal{G}$'s own marginal
law can change from $\Prob$ to $\Q$ when $\gamma_\kappa \neq 0$ ---
independence, which is what is needed below, does not require
$\gamma_\kappa = 0$. Since $W^{S,\Q}, W^{v,\Q}$ are $(W^S,W^v)$ shifted
by an adapted drift, $\sigma(W^{S,\Q}, W^{v,\Q}) = \sigma(W^S,W^v)$, so
$\mathcal G$ is independent of $\sigma(W^{S,\Q}, W^{v,\Q})$ under $\Q$
too.
Work under a regular conditional probability $\Q(\,\cdot \mid \mathcal
G)$, which exists on the Polish path space. Conditionally on $\mathcal
G$, the pair $(W^{S,\Q}, W^{v,\Q})$ remains a correlated Brownian pair
with correlation $\rho_{Sv}$, and $(S, v)$ solves a time-inhomogeneous
Heston system whose mean-reversion input $s \mapsto
\kappa_s^{\Q}(\omega)$ is a fixed, deterministic, locally bounded
function bounded below by $\kappa_* > 0$ (Assumption~\ref{ass:mpr}),
while $(\bar\theta^{\Q}, \xi, \rho_{Sv})$ satisfy the hypotheses of
Andersen and Piterbarg (2007, Prop.~3.1) with $\rho_{Sv} \le 0$. That
criterion requires non-positive correlation and positive variance
parameters but no upper bound on the reversion speed, since a larger
$\kappa^{\Q}$ only strengthens the downward pull on $v$ and cannot
induce a moment explosion; it yields, for $\Q$-a.e.\ $\omega$,
\[
\sup_{t \le T} \E^{\Q}\bigl[S_t \mid \mathcal G\bigr](\omega) < \infty
\qquad\text{and}\qquad
\E^{\Q}\bigl[e^{-rt}S_t \mid \mathcal F_s \vee \mathcal G\bigr]
= e^{-rs}S_s, \quad 0 \le s \le t \le T.
\]
Taking $\E^{\Q}[\,\cdot \mid \mathcal F_s]$ of the conditional
martingale identity and using $\mathcal F_s \subset \mathcal F_s \vee
\mathcal G$ together with the conditional Fubini theorem gives
$\E^{\Q}[e^{-rt}S_t \mid \mathcal F_s] = e^{-rs}S_s$; integrability of
$S_t$ follows from the tower property applied to the conditional moment
bound. Hence $e^{-rt}S_t$ is a true $\Q$-martingale on $[0,T]$ and $C(0)
= \E^{\Q}[e^{-rT}(S_T - K)^+]$. The OU-log-price case is identical with
bounded coefficients.
\end{proof}

\subsubsection{Identification of the risk premia}

The sentiment functionals $(\lambda_t,
\theta_t, \eta_t)$ and, in the baseline, the entire path $\kappa_t$ are
identified from \emph{physical} data alone, the information stream,
with no derivatives data required. The variance risk premium $\lambda_v$
is identified from the wedge between option-implied and realised
variance dynamics, exactly as in Heston, but with the sharper structure
that $a(t) = \kappa_t\bar\theta$ is pinned by observables, so
$\lambda_v$ is the \emph{only} free parameter in $b(t)$; mispricing of
the term structure of implied variance identifies it directly. The
sentiment risk premium $\gamma_\kappa$ exists only in the extension
$\eta_1 > 0$ and is identified only from derivatives data, through the
gap between the $\Prob$-drift of $\kappa_t$ (estimated from the observed
$\kappa$-path itself, which is measurable in real time) and the
$\Q$-drift implied by the option surface. A natural null hypothesis for
empirical work is $\gamma_\kappa = 0$: sentiment innovations orthogonal
to the variance and equity drivers are idiosyncratic and command no
premium, in which case the $\kappa$-dynamics coincide under both
measures even in the stochastic extension.

\section{Simulation Illustration}
\label{sec:simulation}

This section illustrates, within the model, the separation between the
sentiment-driven dynamics and the exactly nested constant-$\kappa$
benchmark of Proposition~\ref{prop:nesting}. All data below is
model-generated: the exhibits measure the conditional information
carried by the observable second factor and the stability of the induced
effects across parameter regimes, not out-of-sample forecasting skill,
which is the subject of the empirical companion work. Parameters are
listed in Table~\ref{tab:params}; robustness across parameter regimes is
summarised in Section~\ref{sec:exhibit3} and reported in full in
Appendix~\ref{app:robustness}. All figures and quoted statistics are
produced by the companion notebook implementing the closed-form solution
\eqref{eq:CS} and the exact forecast of Corollary~\ref{cor:forecast}.

\subsection{Validation}
\label{sec:exhibit1}

Figure~\ref{fig:exhibit1} presents two diagnostic panels. Panel~A tracks
the rolling standard deviation of the swarm's centre of mass over 80
iterations: it drops below $0.01$ within about 17 iterations and stays
there, the finite-$N$ stability one expects from a dynamics in the
propagation-of-chaos class of Theorem~\ref{thm:poc-esso}. Panel~B shows
the sentiment density estimator \eqref{eq:estimator} at four time
snapshots and exhibits the lifecycle the governing PDE \eqref{eq:pde}
predicts: early concentration at the particle cluster, diffusive
spreading with the bandwidth widening at the Mehler rate $h(t)$ toward
the saturation scale $\sqrt{2/\omega}$, and dissipation as the quadratic
radial sink and uniform decay dominate a direct visualisation of
Theorem~\ref{thm:field} and the exact mass law of Remark~\ref{rem:mass}.

The remaining validation results are quickly stated. The model
reproduces the stylised facts that motivated it: the autocorrelation of
absolute returns is significant at all 20 lags tested (volatility
clustering); the realised leverage correlation is $-0.699$ against the
target $\rho_{Sv} = -0.70$; and the ensemble mean of terminal variance
deviates from $\bar\theta = 0.04$ by $0.2\%$, within the natural
ensemble variance of a year-long CIR path. On a representative single
path, $\kappa_t$ varies smoothly on the sentiment timescale set by
$\lambda_t \sim 1$ while $v_t$ stays jagged at the daily
volatility-of-volatility scale $\xi = 0.30$: the two factors
live on different timescales.

\begin{figure}[t]
\centering
\sdsvfigure{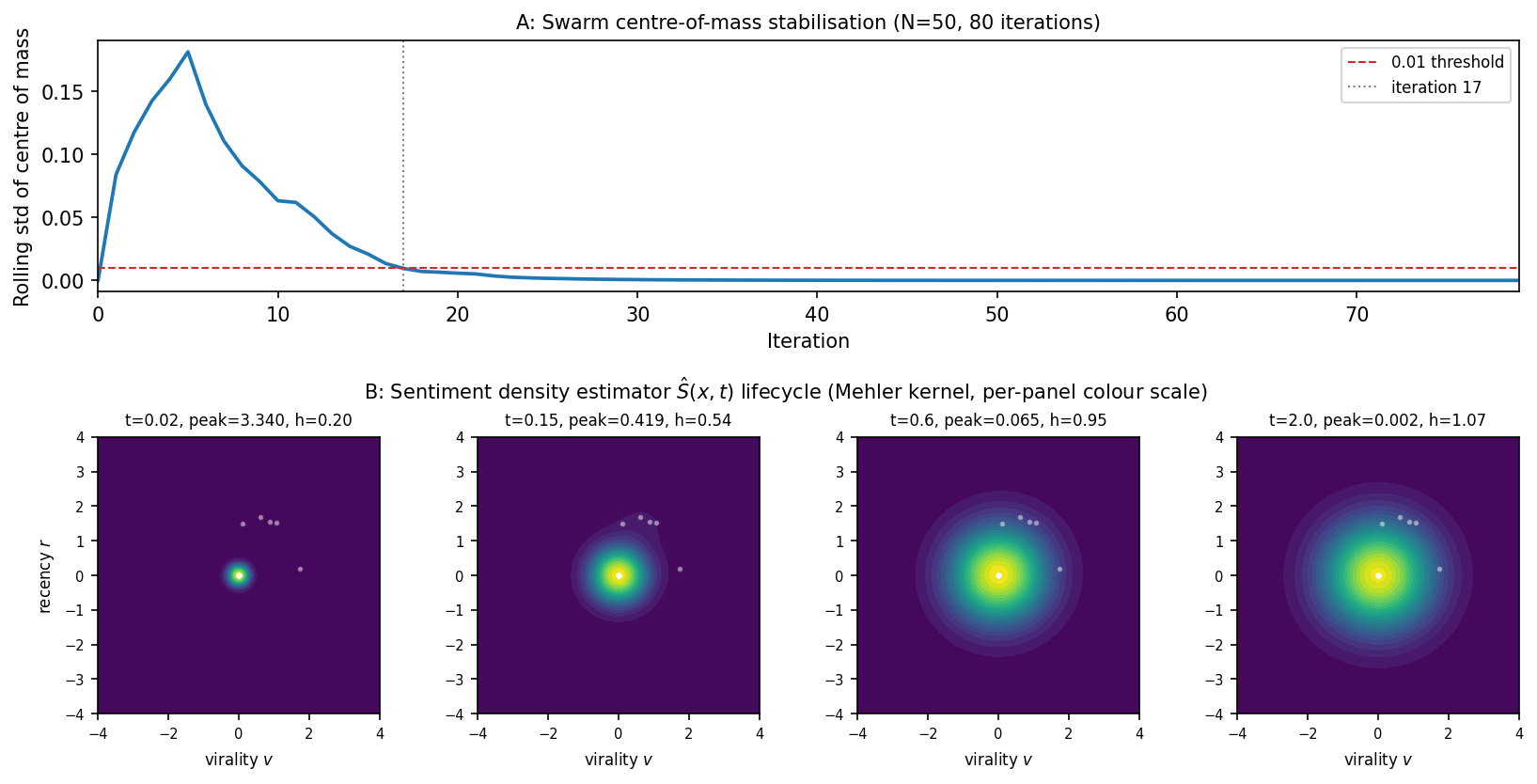}
\caption{\textbf{Validation.} Panel~A: rolling standard deviation of the
swarm centre of mass over 80 iterations; convergence below the $0.01$
threshold within ${\sim}15$ iterations, consistent with the finite-$N$
stability of the regularised dynamics (Theorem~\ref{thm:poc-esso}).
Panel~B: sentiment density estimator $\hat S(x,t)$ from
\eqref{eq:estimator} on the virality--recency plane at four snapshots,
computed with the Mehler kernel \eqref{eq:kernel}: the effective
bandwidth widens as $h(t) = \sqrt{2\tanh(\omega
t_{\mathrm{eff}})/\omega}$ toward the saturation scale $\sqrt{2/\omega}
= 1.07$ while the peak intensity decays monotonically, encoding
diffusive spreading against exponential dissipation $A(t) =
e^{-(1-c)\alpha t}$ (Theorem~\ref{thm:field}, Remark~\ref{rem:mass}).
Parameters as in Table~\ref{tab:params}.}
\label{fig:exhibit1}
\end{figure}

\begin{table}[p]
\centering
\small
\begin{tabular}{llll}
\toprule
Symbol & Description & Value & Justification \\
\midrule
\multicolumn{4}{l}{\emph{ESSO swarm}}\\
$N$ & Number of sparrows & 50 & Moderate swarm size \\
$T_{\max}$ & ESSO iterations & 80 & Convergence within ${\sim}15$ iterations \\
$c_1, c_2$ & Decay hyperparameters & 3.0, 2.0 & Standard exploration-exploitation balance \\
\midrule
\multicolumn{4}{l}{\emph{Sentiment field}}\\
$c$ & Net herding intensity & 0.25 & Herders moderately outweigh contrarians \\
$\alpha$ & Uniform sink rate & 1.0 & Calibrated to social-media engagement decay \\
$\beta$ & Quadratic radial sink coeff. & 1.0 & Matches Gaussian tail decay \\
$\omega = 2\sqrt{(1-c)\beta}$ & Mehler rate parameter & 1.73 & Derived \\
$\sqrt{2/\omega}$ & Mehler saturation bandwidth & 1.07 & Derived (Section~\ref{sec:mehler}) \\
$h_0$ & Initial smoother bandwidth & 0.04 & Granularity of attention measurement \\
\midrule
\multicolumn{4}{l}{\emph{$\kappa_t$ dynamics}}\\
$\lambda_1$ & Reversion scaling constant & 1.0 & Targets $\lambda_t \sim 1$ to $2$ \\
$\theta_1$ & Level scaling constant & 4.0 & Targets $\theta_t \sim 1$ to $2$ \\
$\eta_1$ & Diffusivity scaling constant & 2.0 & Stochastic extension (baseline: $\eta_1 = 0$) \\
$\kappa_0$ & Initial mean-reversion speed & 2.0 & \\
$\kappa_*$ & Feller floor $\xi^2/(2\bar\theta)$ & 1.125 & Theorem~\ref{thm:wellposed}: below it, $v_t \ge 0$ only \\
\midrule
\multicolumn{4}{l}{\emph{Variance process (CIR)}}\\
$\bar\theta$ & Long-term variance & 0.04 & 20\% annualised vol (S\&P 500) \\
$\xi$ & Volatility-of-volatility & 0.30 & Mid-range Heston estimate for equities \\
\midrule
\multicolumn{4}{l}{\emph{Asset price}}\\
$\rho_{Sv}$ & Leverage correlation & $-0.70$ & Empirical equity leverage effect \\
$r$ & Risk-free rate & 0.02 & \\
$S_0$ & Initial asset price & 100 & \\
\bottomrule
\end{tabular}
\caption{Simulation parameters. The variance parameters $\bar\theta,
\xi, \rho_{Sv}$ match typical values for large-cap US equity indices.
Note that with $\xi = 0.30$ and $\bar\theta = 0.04$, the Feller level is
$\kappa_* = 1.125$; the simulations use full-truncation Euler, and
simulated $\kappa$-paths spend roughly a third of their steps below
$\kappa_*$, illustrating why Theorem~\ref{thm:wellposed} claims $v_t \ge
0$ rather than strict positivity absent a floor. ESSO and
sentiment-field parameters are set for illustration; empirical
calibration, well-posed by the measurement results of
Section~\ref{sec:measurement}, is deferred to future work.}
\label{tab:params}
\end{table}

\subsection{Comparison with the Nested Heston Benchmark}
\label{sec:exhibit2}

By Proposition~\ref{prop:nesting}, constant-$\kappa$ Heston is the exact
nested special case of SDSV, so every comparison below quantifies the
incremental content of the sentiment-driven $\kappa_t$ over the best
constant-$\kappa$ benchmark. In sample, the nesting guarantees a
calibrated model never fits worse; whether the attention field captures
sentiment-driven variation in $\kappa$ on real data without overfitting
is left to empirical work.

\subsubsection{Behaviour during a sentiment shock}
\label{sec:shock}

Figure~\ref{fig:exhibit2} isolates the effect of the second factor
during a regime shift. Both models start from identical elevated
variance $v_0 = 0.09$ (30\% instantaneous volatility) with identical
Brownian increments across 400 paths; the only difference is the
mean-reversion speed. In SDSV the shock is imposed exogenously as the
post-shock initial condition $\kappa_0 = 6.0$, standing in for the
functionals' response to an information burst, after which the OU
dynamics \eqref{eq:kappa-sde} govern the decay; in Heston, $\kappa$
stays fixed at the equity-calibrated value $2.0$ (Panel~A). The SDSV
model reverts variance toward $\bar\theta$ more aggressively in the
post-shock window: the expected-volatility gap reaches 2.1 percentage
points by day 21 and 3.37 points by day 63 (Panel~B), and the ATM
call-price gap reaches 7.8\% at day 63 (Panel~C) --- a mispricing
invisible to a practitioner using a constant-$\kappa$ model calibrated
before the shock. All option values in Panel~C use only time-$t$
information via the exact forecast of Corollary~\ref{cor:forecast}; no
look-ahead into the realised $\kappa$ path is involved.

\subsubsection{Within-model value of the conditioning information}
\label{sec:panel-d}

At every evaluation date we compare two forecasts against realised
future variance on the same SDSV-generated path: the exact
Corollary-\ref{cor:forecast} formula from the current $(v_t, \kappa_t)$,
and the nested Heston forecast from the same observed $v_t$ with
$\kappa$ frozen at its benchmark value. Both share $v_t$, so the
incremental skill $1 -
\mathrm{MSE}_{\mathrm{SDSV}}/\mathrm{MSE}_{\mathrm{Heston}}$ isolates
exactly the information content of the reversion speed. The result is a
single story: the increment is negligible when sentiment is quiet, and
it grows with horizon and with how much $\kappa_t$ moves
from 4.8\% at 5 days to 43.5\% at 63 days in the post-shock window, and
scaling with $\mathrm{CoV}(\kappa_t)$ in the stationary regime
(Appendix~\ref{app:robustness}, Table~\ref{tab:cov}). The second factor
adds little when sentiment is quiet and adds materially when it shifts,
which is precisely the regime in which Panels~A--C show multi-point
volatility gaps and double-digit option mispricings.

\subsubsection{Drawdown prediction}
\label{sec:panel-e}

We also evaluated the drawdown gauge of Definition~\ref{def:gauge} in
the shock scenario, where the two models disagree most: the SDSV gauge,
fed by the contracting $\mathrm{IV}_t(\Delta)$ of
Corollary~\ref{cor:IV}, assigns uniformly lower exceedance probabilities
than the constant-$\kappa$ gauge. Against realised frequencies over the
400 paths, however, both gauges overstate exceedances at daily
monitoring by a similar margin (maximum calibration gap $0.104$ for SDSV
versus $0.112$ for the benchmark), for the leverage and
discrete-monitoring reasons set out in Section~\ref{sec:drawdown}. We
therefore treat the gauge as a closed-form, look-ahead-free estimate of
downside risk that tracks the shape of the realised curve, and claim no
calibration advantage over the nested benchmark.

\subsubsection{Regime pricing}
\label{sec:panel-f}

Pricing 63-day ATM calls from an elevated starting variance (28\% vol)
under three sentiment regimes, $\kappa \in \{0.3, 1.6, 4.0\}$, produces
an ATM price spread of 8.6\% across regimes. A constant-$\kappa$ model
assigns a single price to all three by construction; the spread is a
structural, not incremental, advantage of conditioning on the sentiment
state.

\begin{figure}[t]
\centering
\sdsvfigure{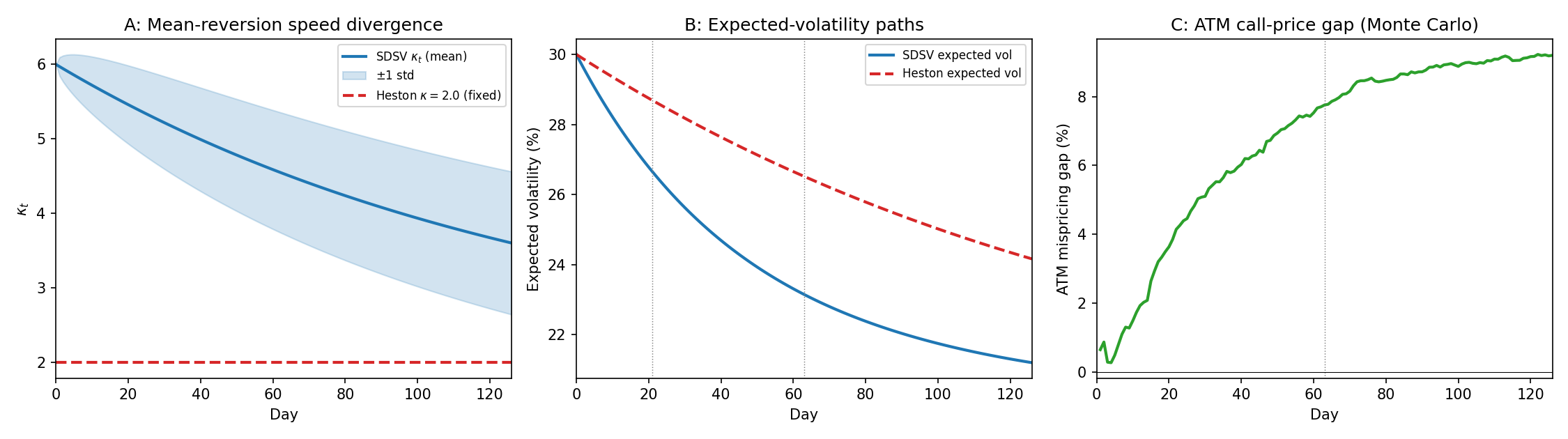}
\caption{\textbf{SDSV vs the nested Heston benchmark during a sentiment
shock.} Both models start from identical elevated variance $v_0 = 0.09$
(30\% vol) with identical Brownian increments across 400 paths over a
126-day window; at day 0 the SDSV $\kappa_t$ jumps to $6.0$ and decays
through the OU dynamics \eqref{eq:kappa-sde}, while the Heston baseline
$\kappa = 2.0$ stays fixed. Panel~A: $\kappa_t$ divergence (SDSV mean
$\pm 1$ std vs the constant). Panel~B: expected-volatility paths; the
gap reaches 2.1pp by day 21 and 3.37pp by day 63. Panel~C: approximate
ATM call prices evaluated with the exact time-$t$ forecast of
Corollary~\ref{cor:forecast}; the mispricing gap reaches 7.8\% at day
63. Parameters as in Table~\ref{tab:params}.}
\label{fig:exhibit2}
\end{figure}

\subsection{Robustness}
\label{sec:exhibit3}

Appendix~\ref{app:robustness} varies each parameter one at a time from
the baseline and reports the post-shock statistics. Three patterns
emerge, each of which is what a genuine mechanism rather than a
numerical accident would produce: the effects are essentially invariant
to vol-of-vol, leverage, and $\kappa$-reversion speed, the parameters
that should not drive an effect sourced in the reversion channel; they
scale monotonically with the shock size $\kappa_{\mathrm{shock}}$; and
they vanish as the initial variance approaches its long-run level, the
boundary at which SDSV reduces to Heston. Two limits should be stated
plainly. The study is one-at-a-time (local) sensitivity around a single
baseline, so it does not chart parameter interactions except along the
single slice of Figure~\ref{fig:exhibit3}; and, like all of this
section, it is within-model, since the data is model-generated, so these
are stability and identifiability results rather than out-of-sample
evidence.

\paragraph{Empirical deployment.} Deployment on market data requires
estimating the measurement parameters $(\tau, \Lambda, \lambda_1,
\theta_1, r_0)$ from an information stream and the risk premium
$\lambda_v$ from the option surface; by
Propositions~\ref{prop:scale-inv}--\ref{prop:w1-stability} and
Corollary~\ref{cor:missing} this calibration problem is well posed, and
by Proposition~\ref{prop:nesting} any calibrated instantiation is weakly
dominant in sample relative to a constant-$\kappa$ benchmark. The
empirical program, including forecasting architectures built on the
swarm state, is developed in companion work.

\section{Extensions to Multi-Asset Correlations}
\label{sec:multiasset}

A multi-asset extension introduces controlled sentiment spillovers
across related securities. For target asset $m = 1, \dots, M$, the
information stream $\mathcal{I}^m$ includes primary items (label $k =
m$) and selected items from related assets (label $l \neq m$, identified
via named-entity recognition or sector classification). Producer
selection runs over the full stream, with cross-asset producer
attraction scaled by $\beta_{ml} \in [-1, 1]$ (positive for narrative
alignment, negative for substitutes).

In the mean-field limit, this yields the coupled system
\begin{equation}\label{eq:multi}
\partial_t T^m = \Delta T^m - \frac{c^m}{T^m}\, |\nabla T^m|^2
- \sum_{l \neq m} \beta_{ml}\, \frac{\nabla T^m \cdot \nabla T^l}{T^m}
- W^m(x)\, T^m, \qquad m = 1, \dots, M,
\end{equation}
on non-negative initial data. The cross terms encode directed spillover.
The matrix $B = (\beta_{ml})$ admits calibration from historical
realised volatility correlations via $\beta_{ml} = \tanh(\gamma \cdot
\mathrm{corr}(v^m, v^l))$ for tunable $\gamma > 0$. Each asset's
individual radial functionals and resulting $\kappa_t^m$ admit the same
closed-form structure as the single-asset model when the cross-coupling
is treated perturbatively; the single-asset case is recovered when $B$
is the identity.

The cross-coupled system is no longer fully linearisable in closed form
because the gradient cross-term $(\nabla T^m \cdot \nabla T^l)/T^m$ does
not separate under the power-law substitution applied component-wise.
Existence and uniqueness must therefore be established by other means;
the following theorem provides this in the empirically relevant regime
of bounded cross-coupling.

\begin{proposition}[A priori bound, any coupling size]\label{prop:multi-apriori}
Let $M \ge 1$ and consider the coupled system \eqref{eq:multi} with
initial data $T_0^m \in L^1 \cap L^\infty(\R^2)$ strictly positive
almost everywhere, parameters $c^m \in (0,1)$, cross-coupling
coefficients $\{\beta_{ml}\}_{m\neq l} \subset \R$ of \emph{any} size,
and quadratic radial sinks $W^m(x) = \alpha^m + \beta^m|x|^2$ with
$\alpha^m, \beta^m > 0$. Every classical solution $(T^1,\dots,T^M)$ of
\eqref{eq:multi} on $\R^2\times(0,\infty)$, should one exist, satisfies:
(i) $T^m(x,t) \le \sup_x T_0^m(x)$ for all $(x,t)$ and all $m$; (ii)
$T^m(x,t) > 0$ for every $(x,t)$ (strict positivity, via the strong
maximum principle for the non-degenerate diffusion, applied
componentwise); and (iii) for every fixed $R>0$ and $t_0>0$, $T^m(x,t)
\ge \delta_R(t_0) > 0$ for all $x \in B_R$ and $t \ge t_0$, for some
$\delta_R(t_0)$ depending on $R, t_0$ but not on the coupling size.
No uniform-in-$x$ lower bound over all of $\R^2$ holds, or should be
expected: like the single-asset field of Theorem~\ref{thm:field}, each
$T^m$ decays at infinity (Remark~\ref{rem:multi-decay}), so (iii) is the
correct, and the only available, form of a lower bound. All three
statements hold with no coupling-dependent buffer, for every coupling
size.
\end{proposition}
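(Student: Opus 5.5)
The plan rests on one structural observation. Every coupling term in \eqref{eq:multi} carries a factor $\nabla T^m$, both the self-herding term $(c^m/T^m)|\nabla T^m|^2$ and the cross term $\beta_{ml}(\nabla T^m\cdot\nabla T^l)/T^m$. So at any spatial critical point of $T^m(\cdot,t)$, in particular at an extremum, all nonlinear and coupling contributions vanish identically, whatever the size of $\beta_{ml}$. Equivalently, I would rewrite the $m$-th equation as a \emph{linear} parabolic equation in $T^m$ with a frozen drift,
\[
  \partial_t T^m = \Delta T^m + b^m\cdot\nabla T^m - W^m T^m,
  \qquad
  b^m := -\frac{c^m\nabla T^m + \sum_{l\neq m}\beta_{ml}\nabla T^l}{T^m}.
\]
Here $b^m$ is locally bounded wherever $T^m>0$, since the solution is classical. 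The maximum principle for such operators only ever evaluates the drift against $\nabla T^m$. Consequently any constant obtained from extremum arguments, rather than from Harnack or Schauder estimates, is coupling-free.

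\emph{Step (i), upper bound.} Put $M_m:=\sup_x T_0^m$. I would apply the weak maximum principle to $T^m-M_m$. The constant $M_m$ is a supersolution of the linear equation above, because $W^m>0$ gives $\partial_t M_m-\Delta M_m-b^m\cdot\nabla M_m+W^mM_m=W^mM_m\ge 0$. On $\R^2$ this comparison needs some control at infinity. I would use that classical solutions are bounded on each strip $\R^2\times[0,t_1]$ and decay at infinity (Remark~\ref{rem:multi-decay}), so that a spatial maximum of $T^m(\cdot,t)$ is attained. Where the maximum is attained, $\nabla T^m=0$ and $\Delta T^m\le 0$, hence $\partial_t T^m\le -W^mT^m<0$. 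A standard first-time-of-exceedance argument applied to $T^m-M_m-\varepsilon e^{t}$, followed by $\varepsilon\downarrow0$, then gives (i). If decay at infinity cannot be assumed a priori, I would instead localise with a barrier $\varepsilon(1+|x|^2)e^{Ct}$. This is where some care is needed: the barrier's gradient meets $b^m$, so the barrier's growth must be beaten by the sink $\beta^m|x|^2$ rather than by bounds on $b^m$.

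\emph{Step (ii), strict positivity.} Fix $m$ and a compact set $K\times[t_0,t_1]$. On it $T^m$ is continuous and $T^m\ge 0$, and by (i) together with continuity the coefficients $b^m$ and $W^m$ are bounded where $T^m$ stays positive. The equation for $T^m$ is linear and uniformly parabolic, with bounded zeroth-order term $-W^m$. The strong maximum principle (Nirenberg form, applied to $-T^m$) then says: if $T^m$ vanished at an interior point $(x_0,t_0)$, it would vanish on all earlier times in the connected domain. That would contradict $T_0^m>0$ a.e. together with continuity of the solution as $t\downarrow0$. A cleaner route is to apply the argument to $v^m=\log T^m$, where a zero of $T^m$ corresponds to $v^m\to-\infty$, and to use the same extremum principle as in Step (iii).

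\emph{Step (iii), coupling-free local lower bound; this is the main obstacle.} I would try first to compare $T^m$ with the uncoupled minorant. At any interior spatial minimum the gradient terms vanish, so $\partial_tT^m\ge \Delta T^m - W^mT^m\ge -W^mT^m$ there. The candidate bound is therefore the single-asset Mehler solution of Theorem~\ref{thm:field} (with $c=0$), restricted to $B_R$, whose positivity and explicit bounds on $B_R\times[t_0,\infty)$ are coupling-free. The difficulty is that the minimum of $T^m$ over $B_R$ may sit on $\partial B_R$, where $\nabla T^m\neq0$ and the drift $b^m$ (hence $\beta_{ml}$) re-enters. Working with a weighted quantity $T^m e^{\psi}$ does not avoid this, because its critical points have $\nabla T^m=-T^m\nabla\psi$, and this again pairs with $\nabla T^l$. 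My first attempt would be to run the minimum argument on all of $\R^2$ for the ratio $T^m/\underline u$, where $\underline u$ is a positive Mehler-type subsolution of $\partial_t\underline u=\Delta\underline u-W^m\underline u$. At a global minimum of the ratio one has $\nabla T^m/T^m=\nabla\underline u/\underline u$. I would then try to close the inequality using only the explicit log-gradient $\nabla\log\underline u$, which is linear in $x$, together with the sign freedom in choosing the Mehler parameter. I expect this to require a genuine additional idea: either an a priori control of $\nabla\log T^l$ that is itself coupling-independent, or a Harnack inequality whose constant depends on the drift only through a quantity like $\|b^m\|_{L^\infty(B_{2R})}$ that I can bound independently of $\beta$. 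Of the three claims, ``independent of the coupling size'' in (iii) is the one I would scrutinise hardest.
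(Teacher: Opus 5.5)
Your Steps (i) and (ii) follow the paper's route in substance. The appendix freezes $T^l$ for $l\neq m$ and rewrites the $u^m$-equation as a linear drift--diffusion whose drift term $a^m\cdot\nabla v^m$ vanishes at critical points. It then runs the interior-maximum argument, exactly as you do with $b^m$ in the $T^m$ variable. Both arguments tacitly need the maximum to be attained. Your barrier variant would also need a growth bound on $\nabla T^l$ at infinity, which the statement does not supply.

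For (ii), no maximum principle is needed. The equation is not even defined where $T^m=0$, so a continuous classical solution never vanishes on the connected set $\R^2\times(0,\infty)$, and it is positive everywhere once it is positive somewhere. The Nirenberg route would in any case need $b^m$ locally bounded near the putative zero, which it is not.

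The genuine gap is (iii), which you leave open, and in the form stated it cannot be closed, because it is false. The function $e^{-\alpha^m t}\sup_x T_0^m$ is a supersolution of your linearised equation: the drift term vanishes on functions of $t$ alone, and $W^m\ge\alpha^m$. So your own Step (i) comparison gives $T^m(x,t)\le e^{-\alpha^m t}\sup_x T_0^m\to 0$ uniformly in $x$, for every coupling. Already for $M=1$, with no coupling at all, the Mehler formula and \eqref{eq:masslaw} give $u(x,t)\le e^{-(1-c)\alpha t}\operatorname{sech}(\omega t)\|u_0\|_{L^\infty}$. Hence no $\delta_R(t_0)>0$ bounds $T^m$ below on $B_R\times[t_0,\infty)$. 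At most one can hope for a bound on $B_R\times[t_0,t_1]$, with a constant depending on $t_1$ and the data. Your proposed minorant, the uncoupled Mehler solution, has the same defect: it has no positive lower bound on $B_R\times[t_0,\infty)$ either.

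Even for the finite-horizon version, the paper does not supply the ingredient you are missing. It asserts that ``the same compact-set argument as Theorem~\ref{thm:field}'s'' gives $\delta_R(t_0)$. That argument is the strict positivity of the explicit Mehler kernel on compact sets (as in Step~1(c) of Proposition~\ref{prop:w1-stability}), and no such representation exists once the cross terms are present. The critical-point device, the only coupling-free tool in either argument, controls $\inf_{\R^2}T^m(\cdot,t)$, and that infimum is $0$.

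Your diagnosis is exactly right: on $\partial B_R$ one has $\nabla T^m\neq0$, so the drift $\sum_{l\neq m}\beta_{ml}\nabla T^l/T^m$ re-enters at full strength. There is also concrete reason to doubt coupling-independence. The first-order part of the $T^m$-equation transports $T^m$ with velocity $\sum_{l\neq m}\beta_{ml}\nabla T^l/T^m$. For one sign of $\beta_{ml}$ this points up $\nabla T^l$. When $T^l$ peaks inside $B_R$, it therefore carries the small far-field values of $T^m$ inward, at a speed proportional to $|\beta_{ml}|$ that grows as $T^m$ shrinks.

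A correct (iii) therefore needs the restriction $t\le t_1$. It must also either let $\delta$ depend on the coupling (and on the solution, through local bounds on $\nabla T^l/T^m$), or rest on a genuinely new coupling-free local estimate. Neither your proposal nor the paper provides the latter.
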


\begin{remark}\label{rem:multi-decay}
The cross-coupling in \eqref{eq:multi} is a gradient term, which does
not counteract the confining sink $-W^m T^m$ at long range; there is no
mechanism by which coupling could produce a field that stays bounded
away from zero as $|x|\to\infty$. A single global constant $\delta>0$
with $T^m(x,t)\ge\delta$ for all $x$ would therefore be false; the
correct statement is the compact-set bound (iii), which is what the
proof (via the maximum principle at critical points)
establishes and is also the form used in Appendix~\ref{app:multiasset}'s
own construction of the unperturbed baseline $U^m$.
\end{remark}

\begin{theorem}[Multi-asset existence and uniqueness, small coupling]\label{thm:multi}
Under the hypotheses of Proposition~\ref{prop:multi-apriori}, there
exists $\varepsilon^* > 0$, depending on $\{c^m, \alpha^m, \beta^m,
\|T_0^m\|_{L^1 \cap L^\infty}\}_{m=1}^M$, such that whenever $\max_{m
\neq l} |\beta_{ml}| < \varepsilon^*$, \eqref{eq:multi} has a unique
classical solution $(T^1,\dots,T^M)$ with each $T^m \in
C^{2,1}(\R^2\times(0,\infty))$, satisfying the bound of
Proposition~\ref{prop:multi-apriori}.
\end{theorem}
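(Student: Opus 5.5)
The plan is to perturb off the decoupled system using the power-law substitution componentwise. For each $m$ set $u^m := (T^m)^{1-c^m}/(1-c^m)$. By Proposition~\ref{prop:powerlaw}, $T^m$ solves \eqref{eq:multi} if and only if
\[
  \partial_t u^m = \Delta u^m - (1-c^m)W^m u^m - \sum_{l\neq m}\beta_{ml}\, F_{ml}(u),
\]
where the cross term is $F_{ml}(u) = (T^m)^{-c^m}\,\nabla T^m\cdot\nabla T^l/T^m$. Since $\nabla T^m = T^m(1-c^m)^{-1}\ldots$, the cleanest form uses logarithms: with $v^m = \log T^m$, one has $\nabla T^m/T^m = \nabla v^m$, so $F_{ml} = \nabla u^m \cdot \nabla v^l\, T^l$. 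Equivalently,
\[
  F_{ml} = \nabla u^m\cdot \nabla T^l,
\]
which is bilinear in gradients and contains no singular factor of $T^m$. The unperturbed problem ($\beta_{ml}=0$) is solved by the Mehler representation \eqref{eq:CS}, producing the baseline $U^m$. This baseline is positive, is bounded by $\sup T_0^m$ (Theorem~\ref{thm:field}), and has Gaussian decay.

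Write $u^m = U^m + w^m$. Recast the system in mild form through the Feynman--Kac semigroup $P^m_t$ with kernel $e^{-(1-c^m)\alpha^m t}K_t(\cdot,\cdot;\omega^m)$:
\[
  w^m(t) = -\sum_{l\ne m}\beta_{ml}\int_0^t P^m_{t-s}\, F_{ml}(U+w)(s)\,ds .
\]
I will run a contraction in a weighted space $X$ of $C^1$-in-$x$ functions. Its norm is
\[
  \|w\|_X = \sup_{t}\Bigl(\|w(t)\|_\infty + \min(1,t)^{1/2}\|\nabla w(t)\|_\infty\Bigr),
\]
with the time weight chosen to absorb the $t^{-1/2}$ blow-up of gradients of $U^m$ near $t=0$ for merely $L^\infty$ data. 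Three estimates drive the argument:
\begin{itemize}
  \item[(a)] $\|\nabla P^m_{t}f\|_\infty \le C t^{-1/2}\|f\|_\infty$ for $t\le 1$, read off the explicit gradient of \eqref{eq:kernel} as in Step~1(b) of Proposition~\ref{prop:w1-stability}.
  \item[(b)] Exponential decay of $P^m_t$ in $L^\infty$ at rate at least $(1-c^m)\alpha^m$, which makes the time integral over $[0,\infty)$ converge.
  \item[(c)] A local-Lipschitz bound for the map $u\mapsto F_{ml}$ on a ball of $X$. Here $T^l = [(1-c^l)u^l]^{1/(1-c^l)}$ is $C^1$ in $u^l$ with derivative vanishing at $0$ (Step~3 of Proposition~\ref{prop:w1-stability}), so $\nabla T^l$ is controlled by $\nabla u^l$ times a bounded factor whenever $u^l$ is bounded.
\end{itemize}
With these in hand, the Duhamel integral $\int_0^t (t-s)^{-1/2}s^{-1}\,ds$ is the delicate point. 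The product of two gradients each of size $s^{-1/2}$ gives $s^{-1}$, which is not integrable at $0$.

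I expect this near-$t=0$ singularity to be the main obstacle. My first attempt is to place the cross term in divergence-type form. One has $\nabla u^m\cdot\nabla T^l = \nabla\cdot(u^m\nabla T^l) - u^m\Delta T^l$, but the second piece is worse. I therefore instead use $P^m$ acting on $\nabla\cdot G$, where $\|P_t\nabla\cdot G\|\lesssim t^{-1/2}\|G\|$. With $G = T^l\nabla u^m$, the remaining piece $T^l\Delta u^m$ is handled by substituting the equation for $u^m$. Failing that, I would prove the contraction on $[t_0,\infty)$, where gradients are bounded, and combine it with a short-time local solution obtained from standard quasilinear parabolic theory on $(0,t_0]$. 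In either route the smallness threshold $\varepsilon^*$ is determined by the constants in (a)--(c), which depend only on $c^m,\alpha^m,\beta^m$ and $\|T_0^m\|_{L^1\cap L^\infty}$. Contraction then yields existence and uniqueness of $w$ in the ball.

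Finally, I upgrade the mild solution to a classical one. Parabolic Schauder theory, applied locally with the Hölder-continuous coefficients that the fixed point provides, gives $u^m\in C^{2,1}$. Positivity of $u^m$ follows from Proposition~\ref{prop:multi-apriori}(ii), which lets me invert the substitution via Proposition~\ref{prop:powerlaw} and return to $T^m\in C^{2,1}$. The bounds of Proposition~\ref{prop:multi-apriori} then apply to this classical solution. Uniqueness within the full classical class, not just within the ball, follows by combining those a priori bounds with a Grönwall estimate on the difference of two mild solutions.
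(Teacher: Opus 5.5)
There is a genuine gap, and it is in your first display rather than at $t=0$. Since $\nabla u^m=(T^m)^{-c^m}\nabla T^m$, the transformed cross term is
\[
F_{ml}=(T^m)^{-c^m}\,\frac{\nabla T^m\cdot\nabla T^l}{T^m}=\frac{\nabla u^m\cdot\nabla T^l}{T^m}=\bigl[(1-c^m)u^m\bigr]^{-1/(1-c^m)}\,\nabla u^m\cdot\nabla T^l ,
\]
and not $\nabla u^m\cdot\nabla T^l$. Your step through $\nabla u^m\cdot\nabla v^l\,T^l$ silently drops the factor $1/T^m$. A homogeneity check catches this: the original cross term has degree $0$ in $T^m$, so after multiplying by $(T^m)^{-c^m}$ it has degree $-c^m$, whereas $\nabla u^m\cdot\nabla T^l$ has degree $1-c^m$.

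That factor is the whole difficulty of the theorem. $T^m(\cdot,t)$ decays like a Gaussian and has no global lower bound (Theorem~\ref{thm:field}(iv), Remark~\ref{rem:multi-decay}), so the coefficient blows up at spatial infinity for every $t>0$. Estimate (c) therefore fails in your space $X$ whatever time weight you use. A sup-norm ball of radius $\rho$ about $U$ contains functions $u^m=U^m+w^m$ that vanish or turn negative wherever $U^m<\rho$, i.e.\ off a compact set. There $F_{ml}(U+w)$ is not even defined, let alone Lipschitz in $w$.

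Even along the decoupled baseline, $\nabla T^l/T^m=(T^l/T^m)\,\nabla\log T^l$ is unbounded in general. For Gaussian tails $\nabla\log T^l$ grows linearly in $|x|$, and $T^l/T^m$ itself grows if $T^l$ has heavier tails than $T^m$. So the coupling is an unbounded drift, not a small bounded perturbation in any spatially unweighted norm. The non-integrable $s^{-1}$ at $s=0$ that you single out is secondary.

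The paper's proof identifies exactly this factor as the delicate point. It runs the contraction on $[t_0,T^*]$ with $t_0>0$ around the Mehler baseline, and argues that the singular prefactor is offset by the Gaussian decay of the gradient product it multiplies; it controls the ratio rather than each factor separately. To repair your plan you must build that into the function space, e.g.\ norms on the relative perturbation $w^m/U^m$, or Gaussian-weighted norms. These would keep positivity and $U^m$-comparable tails along the iteration and keep $\nabla T^l/T^m$ under control. A fixed-radius ball in a spatially unweighted sup-type norm, whether yours or the one in the paper's sketch, does not enforce this by itself.

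The same coefficient undermines your remaining steps:
\begin{itemize}
\item The short-time quasilinear fallback on $(0,t_0]$, with merely $L^\infty$, a.e.-positive data, meets the singular coefficient unchanged.
\item Positivity of $u^m$ cannot be imported from Proposition~\ref{prop:multi-apriori}(ii). That result presupposes a classical solution of \eqref{eq:multi}, which you can only form via Proposition~\ref{prop:powerlaw} once $u^m>0$ is known. Positivity has to come from the $u$-system itself, e.g.\ a minimum principle for the linear drift equation.
\item Gr\"onwall uniqueness in the full classical class needs a priori gradient and tail control that places every classical solution in the contraction space. The sup bounds and compact-set lower bounds of Proposition~\ref{prop:multi-apriori} do not supply this.
\end{itemize}
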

\begin{remark}
Proposition~\ref{prop:multi-apriori}'s a priori bound holds regardless
of coupling size; existence of a classical solution attaining it beyond
the small-coupling regime is a natural direction for future work.
\end{remark}

\begin{proof}[Proof sketch]
Apply the power-law substitution $u^m := (T^m)^{1-c^m}/(1-c^m)$ to each
component. As in Proposition~\ref{prop:powerlaw}, the diagonal nonlinear
terms linearise; the system becomes
\[
\partial_t u^m = \Delta u^m - (1-c^m) W^m(x)\, u^m
- \sum_{l \neq m} \beta_{ml}\, G^{ml}(u^m, u^l, \nabla u^m, \nabla u^l),
\]
where each $G^{ml}$ is smooth and Lipschitz on bounded subsets of
$C^{2,1}$ away from $\{u^m = 0\}$. Proposition~\ref{prop:multi-apriori}'s
two-sided $L^\infty$ bound is fully established: freezing only the
\emph{other} assets $(u^l)_{l\neq m}$, rather than the full source
$G^{ml}$, turns the cross-coupling into a linear drift in the map's own
$\nabla u^m$, which vanishes at $u^m$'s own critical points exactly as
the original nonlinear term did, so the componentwise maximum principle
gives the exact bound, with no coupling-dependent buffer, throughout
the iteration --- at every intermediate step, before anything has
converged. This describes any solution that happens to exist; whether
one does is answered separately, by Theorem~\ref{thm:multi}. Its
existence and uniqueness, for small coupling, follows from Banach fixed
point in $C([t_0, T^*]; L^\infty \cap L^2)^M$ around the unperturbed
(decoupled) Mehler-kernel solution, with the cross-coupling treated as a
Lipschitz perturbation of order $\max|\beta_{ml}|$; this argument is
self-contained and does not rely on Proposition~\ref{prop:multi-apriori}.
The full argument is
given in Appendix~\ref{app:multiasset}.
\end{proof}

The smallness condition in Theorem~\ref{thm:multi} is interpretable
economically: it requires cross-asset sentiment spillovers to be modest
relative to the within-asset dissipation rates. Empirically realistic
spillover values $|\beta_{ml}| = |\tanh(\gamma \cdot \mathrm{corr}(v^m,
v^l))| < 1$ lie within the contraction regime for reasonable $\gamma$.
Proposition~\ref{prop:multi-apriori}'s two-sided $L^\infty$ bound holds
unconditionally in the coupling size, for any solution that exists;
whether a solution exists for coupling strong enough to leave that
regime is the open question flagged in Appendix~\ref{app:multiasset}.

\section{Conditions for Alternative Sentiment Aggregation Methods}
\label{sec:conditions}

The framework is modular. We use ESSO because it captures the herding,
contrarian, and attention-decay behaviour of market participants, but
the pipeline from agent dynamics to the sentiment PDE \eqref{eq:pde},
its closed-form solution \eqref{eq:CS}, and the $\kappa_t$ dynamics
\eqref{eq:kappa-sde} rests on only a few structural properties of those
dynamics. The conditions below are offered to facilitate future work
with alternative data sources (order-flow clustering from limit order
books, topic-model news aggregation, graph-based diffusion over
co-mention networks) while preserving the closed-form volatility
dynamics.

\paragraph{Condition 0: Measurement map.} The aggregation method
consumes data through a map into $B_R \subset \R^2$ that is invariant to
common rescaling of attention counts and Lipschitz from item attributes
to positions. Under Condition~0, Propositions~\ref{prop:scale-inv}
and~\ref{prop:w1-stability} and Corollary~\ref{cor:missing} extend
verbatim, since their proofs use only compact support, kernel smoothing,
and mass normalisation.

\paragraph{Condition 1: Bounded Lipschitz mean-field drift.} There is a
map $b : \mathcal{P}_2(\R^2) \times \R^2 \to \R^2$, bounded and jointly
Lipschitz in $(x,\mu)$ with $\R^2$ in Euclidean distance and
$\mathcal{P}_2(\R^2)$ in $W_1$, such that each agent's non-diffusive
velocity is $b[\mu_t](X_t)$ with $\mu_t$ the population state. A
pairwise interaction kernel, $b[\mu](x) = \int K(x-y)\,\mu(dy)$ with $K$
bounded and Lipschitz, is the special case in which $b$ is linear in
$\mu$; the soft-extremum attractor \eqref{eq:soft} and the role-blended
drift of \eqref{eq:role-blend} below are nonlinear instances of the same
hypothesis, verified directly rather than via a kernel. Condition~1
ensures global Lipschitz mean-field drift, supporting the
propagation-of-chaos estimate (Theorem~\ref{thm:poc}) at the sharp
$N^{-1/2}\sqrt{\log N}$ rate.

\paragraph{Condition 2: Uniform second-moment bound.} $\sup_N \sup_j
\E[|X_j(t)|^2] \le M(t) < \infty$ for a locally bounded function $M$,
guaranteeing tightness in $\mathcal{P}_2(\R^2)$ and finiteness of the
radial functionals.

\paragraph{Condition 3: Non-degenerate isotropic noise.} Each agent
receives an additive stochastic perturbation with isotropic,
non-degenerate covariance bounded away from zero on compact time
intervals. This produces the Laplacian $\Delta T$ in the limiting PDE
and provides regularising effects ensuring $T(\cdot,t) \in L^1 \cap
L^\infty$ for all $t > 0$.

\paragraph{Condition 4: Macroscopic equation has the saturating-herding
form.} The macroscopic limit of the agent dynamics, however it is
established, must yield the nonlinear term $-(c/T)|\nabla T|^2$ for some
$c \in (0,1)$. Within the family of gradient nonlinearities
$-g(T)|\nabla T|^2$, the choice $g(T) = c/T$ is the only one for which
the power-law substitution \eqref{eq:subst} both removes the
nonlinearity and leaves a sink that is linear in $u$, the two properties
jointly required for the Mehler closed form \eqref{eq:CS}. Other
gradient nonlinearities admit Hopf--Cole-type linearisations but not, in
general, both properties simultaneously; e.g.\ bare $\nabla T$ drift
giving $-\nabla\cdot(T\nabla\log T)$ does not inherit the closed-form
structure. The economic interpretation is net herding-versus-contrarian
behaviour with saturation; the rigorous microscopic foundation may
follow various paths, and one rigorous path (diffusion with killing
combined with a power-law readout) is detailed in
Appendix~\ref{app:derivation}.

\paragraph{Condition 5: Linear-in-$T$ sink with quadratic radial
penalty.} The dynamics incorporate (a) a uniform forgetting rate
producing the term $-\alpha T$, and (b) a quadratic radial penalty
producing $-\beta|x|^2 T$. The quadratic radial form is required for the
linearised equation \eqref{eq:linear} to admit the explicit
Mehler-kernel solution \eqref{eq:kernel}. Alternative radial penalties
such as those linear in $|x|$ or of higher polynomial order would require different
special-function kernels (e.g.\ Airy-type) and may not admit such
compact closed form.

\paragraph{Implications.} Any aggregation method satisfying Conditions~0
to~5 inherits the full analytical formulations: the macroscopic PDE
\eqref{eq:pde}, the power-law substitution \eqref{eq:subst}, the
Mehler-kernel closed-form solution \eqref{eq:CS} with its exact mass law
(Remark~\ref{rem:mass}), the KDE sentiment density estimator
\eqref{eq:estimator}, the explicit variation-of-constants solution for
$\kappa_t$ via \eqref{eq:kappa-voc}, the exact conditional forecast of
Corollary~\ref{cor:forecast}, and the measurement stability results of
Section~\ref{sec:measurement}. The ESSO algorithm remains the primary
instantiation: chaotic initialisation, role-based heterogeneity, and
adaptive mutation schedules are specifically designed to capture the
dynamics of social-media-driven market attention.

\section{Conclusion}
\label{sec:conclusion}

This paper develops a two-factor stochastic volatility model in which the
mean-reversion speed of a Cox--Ingersoll--Ross variance process is computed
directly from an observable market attention field, rather than filtered as a
latent quantity, as in every prior two-factor extension of Heston. The
attention field itself is a power-law readout of a Feynman--Kac semigroup,
with an explicit Mehler-kernel representation that keeps the whole framework
analytically tractable: it nests the constant-$\kappa$ Heston specification
exactly, yields closed-form conditional forecasts of variance and integrated
variance, and admits an equivalent martingale measure under which the
speed--level and Feller conditions are invariant
(Proposition~\ref{prop:invariants}). Its microscopic foundation is an
interacting swarm whose limiting dynamics generate the attention field; the
regularised, heterogeneous version of these dynamics satisfies propagation of
chaos at the sharp two-dimensional rate.

Two structural properties make the resulting pipeline usable in practice.
The attention field is strictly positive, bounded, Gaussian-decaying, and
mass-monotone (Theorem~\ref{thm:field}), so the volatility functionals built
from it are automatically well-posed; and the measurement interface that
constructs it is platform-scale invariant and Wasserstein-Lipschitz stable
under perturbations, censoring, and missing data
(Propositions~\ref{prop:scale-inv}--\ref{prop:w1-stability},
Corollary~\ref{cor:missing}). The swarm dynamics also give a behavioural
reading of the field equation --- diffusion, transport, and dissipation
correspond to identifiable microscopic mechanisms --- and the mean-reversion
speed they generate admits explicit representations and conditional
forecasts (Theorem~\ref{thm:kappa}; Corollary~\ref{cor:forecast}).

Because the framework contains Heston exactly as a special case
(Proposition~\ref{prop:nesting}), it preserves a classical benchmark while
adding a new volatility-regime variable in its place, at no extra
calibration cost. This suggests applications to volatility-surface
construction, sentiment-conditioned risk management, and systemic-risk
monitoring; the construction is also modular enough that future work can
substitute other aggregation mechanisms for the swarm without disturbing the
analytical structure established here.

\appendix

\section{Organisation of the Proofs}
\label{app:proofs}

The proofs of the results stated in the body of the paper are organised
across the appendices that follow.
Appendix~\ref{app:meanfield} establishes the quantitative mean-field
limit for the microscopic dynamics (Lemma~\ref{lem:moment},
Theorem~\ref{thm:poc}, and Corollary~\ref{cor:functional-poc}).
Appendix~\ref{app:derivation} constructs the
rigorous microscopic foundation of the sentiment PDE \eqref{eq:pde} and
proves propagation of chaos for the regularised ESSO
(Theorem~\ref{thm:poc-esso}). Appendix~\ref{app:powerlaw} proves the
power-law substitution (Proposition~\ref{prop:powerlaw}) and the
existence, uniqueness, and bounds for the sentiment field
(Theorem~\ref{thm:field}). Appendix~\ref{app:multiasset} proves
the multi-asset a priori bound (Proposition~\ref{prop:multi-apriori})
and small-coupling existence and uniqueness (Theorem~\ref{thm:multi}).
Appendix~\ref{app:kappa} proves the explicit solution of the $\kappa_t$
dynamics (Theorem~\ref{thm:kappa}), the frozen-coefficient forecast
(Corollary~\ref{cor:forecast}), the conditional integrated variance
(Corollary~\ref{cor:IV}), and the first-passage law
(Lemma~\ref{lem:firstpassage}) underlying the drawdown gauge
(Definition~\ref{def:gauge}). Appendix~\ref{app:wellposed} proves
well-posedness of the coupled $(v_t, \kappa_t)$ system
(Theorem~\ref{thm:wellposed}). Appendix~\ref{app:montecarlo} proves
Monte Carlo convergence (Proposition~\ref{prop:mc2}), and
Appendix~\ref{app:price} records the SDE existence and uniqueness result
(Theorem~\ref{thm:protter}) used in Theorem~\ref{thm:price}.
Appendix~\ref{app:robustness} collects additional simulation evidence.
The measure-change invariants (Proposition~\ref{prop:invariants}) are
proved in the body.

\section{Quantitative Mean-Field Limit for the Microscopic Dynamics}
\label{app:meanfield}

We justify the replacement of the discrete $N$-particle system by the
continuous attention field, for the diffusion-with-killing skeleton of
Appendix~\ref{app:derivation}.

This first lemma is deliberately about the literal, \emph{discrete-time}
ESSO recursion of Section~\ref{sec:esso-dynamics} (the update runs in
integer steps $t \to t+1$), so the tool it needs is the discrete form
of Gr\"onwall's inequality --- iterating a one-step bound $m(t+1) \le
2m(t) + C$ --- not the continuous, ODE-comparison version. The
continuous-time bound that Theorem~\ref{thm:poc} and
Theorem~\ref{thm:poc-esso} actually use, for their SDE dynamics, is a
separate result, Lemma~\ref{lem:moment-cts} below.

\begin{lemma}[Uniform second-moment bound]\label{lem:moment}
There exists a locally bounded function $M(t) > 0$, independent of $N$
and $j$, such that $\sup_N \sup_j \E[|X_j(t)|^2] \le M(t) < \infty$ for
all $t \ge 0$.
\end{lemma}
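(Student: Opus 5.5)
The bound comes from a one-step second-moment estimate $m(t+1) \le 2m(t) + C(t)$ for the discrete-time ESSO recursion of Section~\ref{sec:esso-dynamics}, with $C(t)$ independent of $N$ and $j$, iterated by the discrete Gr\"onwall inequality. The natural quantity is $m(t) := \sup_N \sup_j \E[|X_j(t)|^2]$. At $t=0$ it is finite. Mapped positions lie in $B_R$ by the truncation of Definition~\ref{def:mapping}, so $|X_j(0)|^2 \le R^2$. Tent-initialised exploratory sparrows lie in $[0,v_{\max}]\times[0,r_{\max}]$. Hence $m(0) \le \max(R^2, v_{\max}^2 + r_{\max}^2)$.

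For the one-step estimate, fix $t$ and treat each role rule separately. In every case I aim to write $X_j(t+1) = a_j X_j(t) + \xi_j$, where the multiplier satisfies $|a_j| \le 1$ and the increment $\xi_j$ has $\E|\xi_j|^2 \le C(t)$. Then $(a+b)^2 \le 2a^2 + 2b^2$ gives the desired bound, after absorbing constants into $C$.

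\begin{itemize}
\item \emph{Producer, contraction branch.} The factor $\exp(-j/(\nu T_{\max})) \le 1$, so $a_j \le 1$ and $\xi_j = 0$.
\item \emph{Producer, jump branch.} The increment is $Q \sim \mathcal N(0,1)$ per coordinate, so $\xi_j = Q$ with $\E|\xi_j|^2 \le 2$.
\item \emph{Mutation.} The increment is $\sigma(t)Z$, contributing $2\sigma(t)^2$, which is locally bounded since $\sigma(\cdot)$ is a bounded schedule.
\item \emph{Contrarian.} The increment is $-\zeta\nabla T$. This needs $\nabla T(\cdot,t)$ bounded. Differentiating the Mehler closed form \eqref{eq:CS}--\eqref{eq:kernel} together with the Gaussian envelope of Theorem~\ref{thm:field}(iv) should give $\sup_x|\nabla T(x,t)| < \infty$ for each $t>0$, locally uniformly in $t$ away from $0$. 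At $t=0$ the smoothing $h_0$ in \eqref{eq:datum} keeps it finite.
\item \emph{Scrounger attraction, $j>N/2$.} The factors $r_c-r_j$ and $\theta_c-\theta_j$ involve the centres of gravity \eqref{eq:rc}--\eqref{eq:thetac}, which are $T$-weighted averages of grid points and hence bounded by the grid extent. The angle $\theta_j$ is bounded by $2\pi$, and $r_j$ by $|X_j|$.
\item \emph{Scrounger, $j\le N/2$.} The rule $Q\exp\bigl((X_{\mathrm{worst},d}-X_{j,d})/j^2\bigr)$ is the delicate case, discussed below.
\item \emph{Producer-following scroungers.} The position is $X_p + (X_j - X_p)A_d^+$ with $|A_d^+| = \tfrac12$. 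This is a convex-type combination, giving $|X_j(t+1)| \le \tfrac32\max(|X_p(t+1)|, |X_j(t)|)$.
\end{itemize}

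The main obstacle is twofold. First, the attraction rule contains the multiplicative factor $\max_k v_k(t)$, and the leader positions $X_p$ and $X_{\mathrm{worst}}$ are extremal over the swarm. Neither is controlled by a per-particle second moment: $\E\max_k|X_k|^2$ can grow with $N$. My plan is to use the truncation to $B_R$ componentwise (Definition~\ref{def:mapping}) as part of the update, re-projecting after each step. Then every extremal quantity is bounded by $R$ deterministically, and these terms contribute constants independent of $N$. For $X_p(t+1)$, the best producer has just undergone a producer update from a position in $B_R$, so $|X_p(t+1)| \le R + |Q|$. The resulting bound has moments independent of $N$. Second, the factor $\exp\bigl((X_{\mathrm{worst},d}-X_{j,d})/j^2\bigr)$ is bounded only if $X_{\mathrm{worst},d} - X_{j,d} \le 0$. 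This holds because $X_{\mathrm{worst},d}$ is the coordinatewise minimum, so the factor is at most $1$ and the term is dominated by $|Q|$. Had the minimum reading failed, this would be the step to revisit.

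Collecting the cases gives $\E|X_j(t+1)|^2 \le 2\,\E|X_j(t)|^2 + C(t)$ uniformly in $N$ and $j$. Here $C(t)$ combines $R^2$, $\sigma(t)^2$, $\zeta^2\sup_x|\nabla T(\cdot,t)|^2$, $\rho_a$, the grid extent, and Gaussian moments. Iterating yields
\[
m(t) \le 2^t m(0) + \sum_{s<t} 2^{t-1-s} C(s) =: M(t),
\]
which is finite and increasing, hence locally bounded, and independent of $N$ and $j$.
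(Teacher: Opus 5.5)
The skeleton of your argument, a one-step estimate $m(t+1)\le 2m(t)+C$ iterated by discrete Gr\"onwall, is the paper's. The problem is that the step that makes your version close is not part of the dynamics the lemma is about.

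You re-project onto $B_R$ after every update and attribute this to Definition~\ref{def:mapping}. That definition truncates only the mapped item positions at which the swarm is seeded. Neither the role rules nor the mutation step of Section~\ref{sec:esso-dynamics} re-projects, and the Tent-initialised explorers need not lie in $B_R$. This leaves you with two cases, neither of which works:
\begin{itemize}
\item If you add the projection, the lemma is immediate: $\sup_j|X_j(t)|$ is deterministically bounded by a constant depending only on $R$, $v_{\max}$, $r_{\max}$. The role-by-role analysis and the $2^t$-growing bound are then superfluous.
\item If you do not, your own observation that $\E\max_k|X_k|^2$ grows with $N$ is fatal. The factor $\max_k v_k(t)$ in the attraction rule and the leader positions $X_p(t+1)$, $X_{\mathrm{worst}}$ are no longer controlled uniformly in $N$, so the one-step estimate does not close.
\end{itemize}

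There are also slips inside your scheme. First, the bound $|X_p(t+1)|\le R+|Q|$ ignores selection. The best producer is chosen after the jump, so its increment is a selected one among up to $p_d(t)N$ Gaussians, and its second moment can grow like $\log N$; this is another instance of the extremal problem. Second, the form $|a_j|\le 1$ fails in two places:
\begin{itemize}
\item In the attraction rule, $r_j$ (of size $|X_j|$) enters with coefficient $\rho_a\max_k v_k/R$, giving a multiplier up to $1+\rho_a$.
\item For $A_d^+=-\tfrac12$, the update $\tfrac32X_p-\tfrac12X_j$ is bounded by $2\max(|X_p|,|X_j|)$, not $\tfrac32\max$.
\end{itemize}
These two only change the base of the geometric recursion.

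The paper does not attempt a role-by-role verification. It takes as its structural description of the recursion $X_j(t+1)=X_j(t)+\eta_j(t)+\zeta_j(t)$, where:
\begin{itemize}
\item $\|\eta_j(t)\|\le K$ almost surely, with $K$ independent of $N$ and $j$;
\item $\zeta_j(t)\sim\mathcal{N}(0,\sigma_m^2(t)I)$ is independent of $\mathcal{F}_t$, with $\sigma_m^2\le\sigma_{\max}^2$.
\end{itemize}
Expanding the square, the noise cross term has mean zero, and Young's inequality $|2X_j\cdot\eta_j|\le|X_j|^2+K^2$ gives $m_j(t+1)\le 2m_j(t)+3K^2+2d\sigma_{\max}^2$. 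Iterating yields $M(t)=2^{\lfloor t\rfloor+1}(\sup_j\E|X_j(0)|^2+C)$. The $N$-uniform control of the extremal inputs, which you correctly identify as the crux, is built into the constant $K$ rather than derived.

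To repair your proof, do one of two things. Either state that bounded-drift hypothesis explicitly; your case list then shows which features of the role rules it constrains. Or declare the per-step projection openly as a modelling choice, and replace the whole argument by the one-line deterministic bound.
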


\begin{proof}
The particles evolve according to $X_j(t+1) = X_j(t) + \eta_j(t) +
\zeta_j(t)$, where $\|\eta_j(t)\| \le K$ almost surely (bounded
deterministic drift) and $\zeta_j(t) \sim \mathcal{N}(0,
\sigma_m^2(t)I)$ is independent of $\mathcal{F}_t$ with $\sigma_m^2(t)
\le \sigma_{\max}^2 < \infty$. Let $m_j(t) := \E[|X_j(t)|^2]$. Expanding
the squared norm and taking expectations, using independence of
$\zeta_j(t)$ from $X_j(t)$ and Young's inequality $|2X_j \cdot \eta_j|
\le |X_j|^2 + K^2$,
\[
m_j(t+1) \le 2 m_j(t) + C, \qquad C := 3K^2 + 2d\sigma_{\max}^2
,
\]
with $d = 2$. Iterating this linear discrete Gr\"onwall inequality from
$0$ to $n = \lfloor t \rfloor$ gives $m_j(t) \le M(t) := 2^{\lfloor t
\rfloor + 1}(\sup_j \E[|X_j(0)|^2] + C)$, locally bounded and
independent of $N, j$.
\end{proof}

This discrete-time bound is for the literal, iterated sparrow updates
of Section~\ref{sec:esso-dynamics} (e.g.\ the contrarian update); it is
not the object Theorem~\ref{thm:poc} or Theorem~\ref{thm:poc-esso}
evolve, which are continuous-time SDEs. The moment bounds those need
are a different, continuous-time fact, given next.

\begin{lemma}[Uniform second-moment bound, continuous time]\label{lem:moment-cts}
Let $Y_t \in \R^2$ solve $dY_t = b_t(Y_t)\,dt + \sqrt2\,dB_t$ for any
drift with $|b_t(y)| \le B$ for all $t, y$ (covering both
Theorem~\ref{thm:poc}'s baseline, $B=0$, and Theorem~\ref{thm:poc-esso}'s
regularised drift, $B < \infty$ by Condition~1). Then
\[
\E|Y_t|^2 \;\le\; M(t) \;:=\; e^t\bigl(\E|Y_0|^2 + B^2 + 4\bigr),
\]
locally bounded, and depending on the admissible drift only through its
bound $B$.
\end{lemma}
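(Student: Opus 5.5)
The plan is to apply It\^o's formula to $|Y_t|^2$, bound the drift contribution with Young's inequality, and close the estimate with the continuous-time Gr\"onwall inequality. A localisation step handles the stochastic integral. If $\E|Y_0|^2 = \infty$ the bound is vacuous, so assume it is finite.

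\emph{Step 1 (It\^o expansion).} For $f(y) = |y|^2$ we have $\nabla f = 2y$ and $\nabla^2 f = 2I$. The diffusion matrix of $\sqrt2\,dB_t$ is $2I$ in dimension $d=2$, so the It\^o correction is $\tfrac12\operatorname{tr}(2I\cdot 2I) = 2d = 4$. Hence
\[
  d|Y_t|^2 = \bigl(2\,Y_t\cdot b_t(Y_t) + 4\bigr)\,dt + 2\sqrt2\, Y_t\cdot dB_t .
\]

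\emph{Step 2 (localisation).} This is the only point needing care, because a priori $Y$ need not be square-integrable, so the stochastic integral need not be a true martingale. Let $\tau_n := \inf\{t : |Y_t| \ge n\}$. On $[0,\tau_n]$ the integrand $2\sqrt2\,Y$ is bounded by $2\sqrt2\, n$, so the stopped stochastic integral is a true martingale with zero mean. Taking expectations of the stopped identity gives
\[
  \E|Y_{t\wedge\tau_n}|^2 = \E|Y_0|^2 + \E\!\int_0^{t\wedge\tau_n}\bigl(2\,Y_s\cdot b_s(Y_s) + 4\bigr)\,ds .
\]

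\emph{Step 3 (Young's inequality).} Using $|b|\le B$, we have $2\,Y\cdot b \le 2|Y|B \le |Y|^2 + B^2$. Set $m_n(t) := \E|Y_{t\wedge\tau_n}|^2$, which is finite and at most $\max(n^2, \E|Y_0|^2)$. Since $|Y_s| = |Y_{s\wedge\tau_n}|$ for $s \le \tau_n$, Step 2 gives
\[
  m_n(t) \le \E|Y_0|^2 + (B^2+4)\,t + \int_0^t m_n(s)\,ds .
\]

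\emph{Step 4 (Gr\"onwall and removal of the localisation).} Gr\"onwall's inequality gives
\[
  m_n(t) \le e^t\,\E|Y_0|^2 + (B^2+4)(e^t - 1) \le e^t\bigl(\E|Y_0|^2 + B^2 + 4\bigr).
\]
This bound is uniform in $n$. As $n\to\infty$, $\tau_n\to\infty$ almost surely because the solution is non-explosive (the drift is bounded and the noise additive). Fatou's lemma then transfers the bound to $\E|Y_t|^2$.

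The right-hand side is continuous in $t$, hence locally bounded. It depends on the drift only through $B$, which gives the uniformity claimed for both Theorem~\ref{thm:poc} ($B=0$) and Theorem~\ref{thm:poc-esso}. The argument is routine; Step~2 is the only delicate point, and it is dispatched by the stopping-time and Fatou argument above.
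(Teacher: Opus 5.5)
Your proposal is correct and follows the paper's route: It\^o on $|Y_t|^2$, Young's inequality $2|Y|B\le|Y|^2+B^2$, and Gr\"onwall, arriving at the same intermediate bound $e^t\E|Y_0|^2+(B^2+4)(e^t-1)$. Your handling of the localisation is cleaner than the paper's: you apply integral Gr\"onwall to the stopped moments $m_n$, get a bound uniform in $n$, and pass to the limit by Fatou, whereas the paper differentiates $m(t)$ directly and appeals to $M(T)<\infty$, the bound being proved. One small slip: the correct a priori bound is $m_n(t)\le n^2+\E|Y_0|^2$, not $\max(n^2,\E|Y_0|^2)$, but this changes nothing.
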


\begin{proof}
By It\^o's formula applied to $f(y) = |y|^2$ (using $\nabla f = 2y$,
$\sigma\sigma^T = 2I$ for the diffusion coefficient $\sigma = \sqrt2 I$,
so the It\^o correction $\frac12\mathrm{tr}(\sigma\sigma^TD^2f) = 4$),
\[
d|Y_t|^2 = 2Y_t\cdot b_t(Y_t)\,dt + 2\sqrt2\,Y_t\cdot dB_t + 4\,dt.
\]
Taking expectations (the stochastic integral is a true martingale after
the standard localisation by stopping times $\tau_n := \inf\{t :
|Y_t| \ge n\}$, using $M(T) < \infty$ on any fixed horizon to pass to
the limit by monotone convergence) and using $|b_t(y)| \le B$ with
Young's inequality $2|Y_t|B \le |Y_t|^2 + B^2$,
\[
\frac{d}{dt}\E|Y_t|^2 \;=\; 2\E[Y_t \cdot b_t(Y_t)] + 4 \;\le\;
\E|Y_t|^2 + B^2 + 4.
\]
Writing $m(t) := \E|Y_t|^2$, this is $m'(t) \le m(t) + (B^2+4)$; the
corresponding equality has exact solution $(m(0)+B^2+4)e^t - (B^2+4)$
(verified directly by differentiation), which is bounded above by
$(m(0)+B^2+4)e^t$ since $B^2+4>0$, giving the claim by the standard
comparison (Gr\"onwall) inequality for scalar ODEs.
\end{proof}

By Lemma~\ref{lem:moment} and Markov's inequality, for every $R > 0$,
$(\mu_t^N/N)(\{|x| > R\}) \le M(T)/R^2$ uniformly in $N$ and $t \in
[0,T]$, so the family $\{\mu_t^N/N\}$ is tight; by Prokhorov's theorem (Billingsley, 1999) it is relatively compact in the weak topology, and since second moments
are uniformly bounded, every weak limit lies in $\mathcal{P}_2(\R^2)$.

\begin{theorem}[Quantitative propagation of chaos]\label{thm:poc}
Consider the diffusion-with-killing system of
Appendix~\ref{app:derivation}: particles $\{X_i\}_{i \le N}$ evolve as
$dX_i = \sqrt{2}\,dB_i$ with independent Brownian motions $B_i$ and are
killed at the position-dependent rate $(1-c)W(X_i(t))$, independently
across $i$, with killing times $\tau_i$. Assume
\begin{enumerate}
\item[(H0)] the initial positions $X_i(0)$ are i.i.d.\ with law $\rho_0
:= u_0/\!\int u_0$, where $u_0 \in L^1 \cap L^\infty(\R^2)$ is strictly
positive with $\int_{\R^2} e^{\varepsilon_0 |y|^2}\rho_0(dy) < \infty$
for some $\varepsilon_0 > 0$ (satisfied by the datum \eqref{eq:datum},
a finite Gaussian mixture).
\end{enumerate}
Let $S_t := \{i \le N : t < \tau_i\}$ and define
\[
  \bar\mu_t^N := \frac{1}{|S_t|}\sum_{i \in S_t}\delta_{X_i(t)}
  \quad\text{on } \{|S_t| \ge 1\}, \qquad
  \bar\nu_t := \frac{u(\cdot,t)\,dx}{\int_{\R^2} u(\cdot,t)\,dx},
\]
with the convention $\bar\mu_t^N := \bar\nu_t$ on $\{|S_t| = 0\}$, and
with $u$ the unique solution \eqref{eq:CS} of \eqref{eq:linear}. Let
\[
  p_t := \Prob(t < \tau_1)
  = e^{-(1-c)\alpha t}\operatorname{sech}(\omega t)
    \int_{\R^2} e^{-\frac{\omega}{4}\tanh(\omega t)|y|^2}\,\rho_0(dy)
  \;\in\;(0,1]
\]
be the survival probability, given in closed form by
Remark~\ref{rem:mass}. Then:
\begin{enumerate}
\item[(i)] \emph{(Exact conditional law.)} For every $t \ge 0$ and $m
\ge 1$, conditionally on $\{|S_t| = m\}$ the surviving positions are
i.i.d.\ with law $\bar\nu_t$, and $|S_t| \sim \mathrm{Binomial}(N,
p_t)$.
\item[(ii)] \emph{(Quantitative convergence.)} There is an absolute
constant $C_0 < \infty$ and, for each $t$, a constant $C(t) < \infty$
with
\[
  \E\bigl[W_1(\bar\mu_t^N, \bar\nu_t)\bigr]
  \;\le\; C(t)\,\Bigl(\frac{\log(1+N)}{N}\Bigr)^{1/2},
  \qquad
  C(t) \;\le\; \frac{C_0\,\Theta(t)}{\sqrt{p_t}},
\]
where $\Theta(t)$ depends only on the moments of $\bar\nu_t$ and is
locally bounded. The rate is the sharp two-dimensional matching rate
(Remark~\ref{rem:sharpness}). Since $p_t \sim 2 m_\infty
e^{-((1-c)\alpha + \omega)t}$ as $t \to \infty$, with $m_\infty := \int
e^{-\omega|y|^2/4}\rho_0(dy) > 0$, the constant grows exponentially,
$C(t) = O\bigl(e^{((1-c)\alpha + \omega)t/2}\bigr)$: the effective
sample size is $p_t N$, not $N$.
\item[(iii)] \emph{(Survivor concentration.)}
$\Prob\bigl(||S_t|/N - p_t| \ge \varepsilon\bigr) \le
2e^{-2N\varepsilon^2}$.
\end{enumerate}
The flow $t \mapsto \bar\nu_t$ is the unique one with $u(\cdot,t) \in
L^1 \cap L^\infty$ for all $t \ge 0$, by \eqref{eq:CS}.
\end{theorem}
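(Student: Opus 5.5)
The plan is to exploit the fact that the particles are independent, with no interaction at all. Each pair $(X_i,\tau_i)$ is an i.i.d.\ copy of a single killed Brownian motion, so everything reduces to one-particle Feynman--Kac facts plus a standard empirical-measure bound.

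\emph{Step 1: one-particle law.} For a single particle with $dX=\sqrt2\,dB$, killing rate $(1-c)W$ and $X(0)\sim\rho_0$, the Feynman--Kac formula gives
\[
\E\bigl[f(X_t)\mathbf 1_{\{t<\tau\}}\bigr]=\int f(y)\,\E_{\rho_0}\Bigl[f(X_t)e^{-\int_0^t(1-c)W(X_s)ds}\Bigr]
\]
with the sub-probability density equal to $e^{-(1-c)\alpha t}\int K_t(x,y;\omega)\rho_0(dy)=u(x,t)/\!\int u_0$ by \eqref{eq:CS}. The generator of the killed process is $\Delta-(1-c)W$, which matches \eqref{eq:linear}, and the Mehler kernel is its heat kernel. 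Taking $f\equiv1$ and applying \eqref{eq:masslaw} yields the closed form for $p_t$; positivity of $p_t$ is immediate. Normalising gives $\Prob(X_t\in\cdot\mid t<\tau)=\bar\nu_t$.

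\emph{Step 2: part (i).} By independence across $i$, $|S_t|$ is a sum of $N$ i.i.d.\ Bernoulli$(p_t)$ variables. Conditioning on $\{S_t=A\}$ for a fixed set $A$ of size $m$, the event factorises: the survivors in $A$ are independent, each distributed as $\bar\nu_t$, and the killed particles outside $A$ are irrelevant. Summing over all $A$ with $|A|=m$ then gives the i.i.d.\ conditional law, and the result is symmetric in labels. Part (iii) is Hoeffding's inequality for a binomial variable.

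\emph{Step 3: part (ii).} Conditionally on $|S_t|=m\ge1$, $\bar\mu_t^N$ is the empirical measure of $m$ i.i.d.\ samples from $\bar\nu_t$. I will invoke the two-dimensional matching bound (Ajtai--Koml\'os--Tusn\'ady; for unbounded support, Ledoux 2019 eq.~(8), or Fournier--Guillin with a moment of order $>2$). This gives $\E W_1\le \Theta(t)\sqrt{\log(1+m)/m}$ with $\Theta(t)$ controlled by a moment of $\bar\nu_t$. The measure $\bar\nu_t$ has Gaussian tails, by (H0) and the envelope in Theorem~\ref{thm:field}(iv), so all its moments are finite. They are also locally bounded in $t$, since the Mehler kernel gives an explicit Gaussian-mixture-type density whose moments are continuous in $t$.

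The remaining work is averaging over the binomial $m$. On $\{m=0\}$ the error is $0$ by convention. Since $g(m)=\sqrt{\log(1+m)/m}$ is decreasing, I will split into $\{m\ge p_tN/2\}$ and its complement. On the first event $g(m)\le g(p_tN/2)\lesssim \sqrt{\log(1+N)/(p_tN)}$. On the second event, $g\le \sqrt{\log 2}$, and Chernoff gives $\Prob(m<p_tN/2)\le e^{-p_tN/8}$. This tail is bounded by $C/\sqrt{p_tN}\le C\sqrt{\log(1+N)/(p_tN)}$, using $\sup_x \sqrt x e^{-x/8}<\infty$. Together these give $C(t)\le C_0\Theta(t)/\sqrt{p_t}$.

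The asymptotic $p_t\sim2m_\infty e^{-((1-c)\alpha+\omega)t}$ follows from $\operatorname{sech}(\omega t)\sim2e^{-\omega t}$ and dominated convergence in the integral. Uniqueness of the flow is inherited from the uniqueness of \eqref{eq:CS} stated in Theorem~\ref{thm:field}.

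\emph{Main obstacle.} I expect the matching step to be the main obstacle: quoting a sharp $\sqrt{\log m/m}$ bound for non-compactly supported $\bar\nu_t$ in dimension two, with an explicit moment dependence $\Theta(t)$. I would first try a dyadic-annulus decomposition. This applies the compact AKT bound on each annulus $\{2^k\le|x|<2^{k+1}\}$ with rescaling, and pays for the outer annuli with the Gaussian tail. The resulting sum converges and keeps the $\sqrt{\log m/m}$ rate.
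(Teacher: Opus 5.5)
Your proposal is correct and follows the paper's proof essentially step for step: the one-particle Feynman--Kac identification of the killed law with $u(\cdot,t)/\int u_0$, the binomial survivor count with conditionally i.i.d.\ survivors, the conditional two-dimensional matching bound, a Chernoff split at $m = p_tN/2$, and Hoeffding for (iii). Your handling of the small-$m$ event, via $\sqrt{\log(1+m)/m}\le\sqrt{\log 2}$ together with the convention on $\{|S_t|=0\}$, is slightly cleaner than the paper's first-moment/Cauchy--Schwarz bound; but do not offer Fournier--Guillin as the alternative citation, since in the critical case $d=2$, $p=1$ it only gives $\log(1+m)/\sqrt{m}$, so the stated $\sqrt{\log(1+m)/m}$ rate needs the Ledoux/Hahn--Shao/Yukich-type estimate the paper invokes.
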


\begin{proof}
\emph{(i).} The particles are independent, so the survival indicators
are i.i.d.\ Bernoulli and $|S_t| \sim \mathrm{Binomial}(N,p_t)$. For a
single particle, the Feynman--Kac representation \eqref{eq:fk} with the
symmetry $K_t(x,y) = K_t(y,x)$ gives, for Borel $B$,
$\Prob(t < \tau_1,\, X_1(t) \in B) = (\int u_0)^{-1}\int_B u(x,t)\,dx$;
taking $B = \R^2$ identifies $p_t$ with \eqref{eq:umass} divided by
$\int u_0$. Dividing, $\Prob(X_1(t) \in B \mid t < \tau_1) =
\bar\nu_t(B)$; independence and exchangeability give the conditional
i.i.d.\ statement.

\emph{(ii).} By Theorem~\ref{thm:field}(iv) applied to $u$, $\bar\nu_t$
has a Gaussian envelope, so $M_q(t) := \int |x|^q\,\bar\nu_t(dx) <
\infty$ for every $q$. Conditionally on $\{|S_t| = m\}$, $\bar\mu_t^N$
is the empirical measure of $m$ i.i.d.\ draws from $\bar\nu_t$, so the
two-dimensional Kantorovich-$W_1$ estimate for empirical measures with a
finite moment of order $q > 2$ (Hahn and Shao, 1992;
Yukich, 1992; see Ledoux, 2019, eq.~(8) and Thm.~5) gives
\[
  \E\bigl[W_1(\bar\mu_t^N,\bar\nu_t) \,\big|\, |S_t| = m\bigr]
  \;\le\; C_0\,\Theta(t)\Bigl(\frac{\log(1+m)}{m}\Bigr)^{1/2}.
\]
Split on the Chernoff event $\Prob(|S_t| < \tfrac12 p_t N) \le e^{-p_t
N/8}$: on its complement $m \ge \tfrac12 p_t N$, and monotonicity of $m
\mapsto (\log(1+m)/m)^{1/2}$ for $m \ge 3$ gives the bound
$C_0\Theta(t)\sqrt 2\,\bigl(\log(1+N)/(p_t N)\bigr)^{1/2}$. On the
Chernoff event bound $W_1(\bar\mu_t^N,\bar\nu_t) \le
\int|x|\,d\bar\mu_t^N + M_1(t)$ and apply Cauchy--Schwarz; the
contribution is exponentially small by Lemma~\ref{lem:moment-cts} and is
absorbed into $C(t)$. The asymptotics of $p_t$ follow from
$\operatorname{sech}(\omega t) \sim 2e^{-\omega t}$ and dominated
convergence.

\emph{(iii).} Hoeffding's inequality for the i.i.d.\ bounded survival
indicators.
\end{proof}

\begin{corollary}[Convergence of the volatility functionals]
\label{cor:functional-poc}
Under the hypotheses of Theorem~\ref{thm:poc}, let
$(\lambda,\theta,\eta)[\cdot]$ denote the functionals
\eqref{eq:lam-func}--\eqref{eq:eta-func} computed from a configuration
through the estimator \eqref{eq:estimator} and the readout $T =
[(1-c)u]^{1/(1-c)}$. Then for every $t > 0$,
\[
  \E\bigl|(\lambda,\theta,\eta)[\bar\mu_t^N] -
  (\lambda,\theta,\eta)[\bar\nu_t]\bigr|
  \;\le\; C'(t)\,\Bigl(\frac{\log(1+N)}{N}\Bigr)^{1/2},
\]
with $C'(t)$ the product of the constants of
Proposition~\ref{prop:w1-stability} and Theorem~\ref{thm:poc}. In
particular $\kappa_t$ computed from $N$ particles converges to its
mean-field value at the same rate.
\end{corollary}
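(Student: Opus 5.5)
The plan is to compose the two earlier results: the deterministic Lipschitz estimate of Proposition~\ref{prop:w1-stability} and the expected-$W_1$ rate of Theorem~\ref{thm:poc}(ii).

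The first step is to put both arguments into the setting of Proposition~\ref{prop:w1-stability}. That proposition requires configurations supported in $B_R$, while $\bar\mu_t^N$ and $\bar\nu_t$ have unbounded support, so some reduction is needed. The cleanest route is to re-run Steps~1--2 of that proof without truncation. The envelope bounds (a)--(b) hold uniformly in $y\in\R^2$ once $g(x)$ is replaced by $g(x,y)=\tfrac{\omega\bar K}{2S}(\Lambda|y|+|x|)e^{-\vartheta|x|^2}$. Kantorovich--Rubinstein duality then gives a bound $|u[\mu](x)-u[\nu](x)|\le A(t)\,\tilde g(x)\,W_1(\mu,\nu)$ with a Gaussian-decaying weight, provided one uses the sharper observation that $y\mapsto K(x,y)$ is globally Lipschitz. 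This holds because $|\Lambda y-x|\,e^{-\frac{\omega\Lambda}{4S}|y-x/\Lambda|^2}$ is bounded uniformly in $y$. The upper envelope $\Psi$ and Steps~3--4 carry over verbatim, since they use only $\mu(\R^2)=1$.

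The lower bounds $q_{\min}$ and $n_{\min}$ are where compact support entered, through $\delta_K$. For a general probability measure I will instead use $\int_{B_{R}} K(x,y)\,\mu(dy)\ge \delta_K\,\mu(B_R)$. I then choose $R$ large enough that $\bar\nu_t(B_R)\ge 3/4$, which is possible by the Gaussian envelope of Theorem~\ref{thm:field}(iv).

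The second step is to split the expectation. On the good event $G=\{\bar\mu_t^N(B_R)\ge 1/2\}$ the extended Proposition~\ref{prop:w1-stability} applies with a deterministic constant $C$ (built with $\delta_K/2$). On $G$ we therefore have $|(\lambda,\theta,\eta)[\bar\mu_t^N]-(\lambda,\theta,\eta)[\bar\nu_t]|\le C\,W_1(\bar\mu_t^N,\bar\nu_t)$, and taking expectations with Theorem~\ref{thm:poc}(ii) yields $C\,C(t)\,(\log(1+N)/N)^{1/2}$. This is the claimed product of constants.

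On the complement $G^c$ I need two ingredients.
\begin{itemize}
\item A crude a priori bound on the functionals. $\theta$ lies in $[0,\theta_1]$ automatically. $\eta$ and $1/\lambda$ are controlled by $\langle |x|^k\rangle_T$, which is bounded by the kernel moments plus the second moment of the configuration. The lower bound on $\langle|x|\rangle_T$ needed for $\lambda$ follows from positivity of the kernel smoothing: $T$ has positive mass outside any small ball.
\item An estimate of $\Prob(G^c)$. By Theorem~\ref{thm:poc}(i), conditionally on $|S_t|=m$ the survivors are i.i.d.\ $\bar\nu_t$. Hoeffding's inequality gives $\Prob(\bar\mu_t^N(B_R)<1/2 \mid m)\le e^{-m/8}$, and combining with the Chernoff split of the proof of Theorem~\ref{thm:poc} makes $\Prob(G^c)$ exponentially small in $p_tN$.
\end{itemize}
Cauchy--Schwarz together with Lemma~\ref{lem:moment-cts} then shows the contribution from $G^c$ is $o(N^{-1/2})$, so it can be absorbed into $C'(t)$. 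The statement for $\kappa_t$ follows because \eqref{eq:kappa-voc} in the baseline is Lipschitz in the path $(\lambda_s,\theta_s)_{s\le t}$ on bounded sets. The same bound must also hold uniformly in $s\in[\epsilon,t]$, which follows from the local uniformity in Proposition~\ref{prop:w1-stability}; near $s=0$ one uses $t_{\mathrm{eff}}\ge h_0^2/2$.

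I expect the main obstacle to be the first step: removing the compact-support hypothesis of Proposition~\ref{prop:w1-stability}, in particular obtaining the global Lipschitz bound in $y$ and lower bounds on the normalising integrals that hold uniformly over the random configuration. If that is too delicate, the fallback is to truncate both measures to $B_R$ and renormalise. Truncation perturbs the functionals by $O(\text{tail mass})$ and perturbs $W_1$ by $O(\int_{|x|>R}|x|\,d\mu)$. Choosing $R\sim\sqrt{\log N}$ makes both tails negligible, at the cost of a constant that grows like $\exp(2R^2/((1-c)t_{\mathrm{eff}}))$. That growth is polynomial in $N$, which would spoil the rate unless the sharper global bound from the first step is available. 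So I would attempt the global-Lipschitz route first.
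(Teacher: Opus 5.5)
Your proposal is correct, and its skeleton is the paper's. The paper proves the corollary in one line by composing Theorem~\ref{thm:poc}(ii) with Proposition~\ref{prop:w1-stability}. It claims the latter ``applies verbatim'' because nothing in it needs the second argument to be atomic.

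The difference is that you confront a hypothesis that argument skips. Proposition~\ref{prop:w1-stability} assumes support in $B_R$, and two steps of its proof use this: Step~1(b), the envelope $g$, via $|\Lambda y-x|\le\Lambda R+|x|$; and Step~1(c), the floor $\delta_K$. By contrast, $\bar\mu_t^N$ and $\bar\nu_t$ live on all of $\R^2$.

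Your repair is sound:
\begin{itemize}
\item In the notation of that proof, completing the square gives $K_{t_{\mathrm{eff}}}(x,y;\omega)=\bar K e^{-\vartheta|x|^2}\exp\bigl(-\tfrac{\omega\Lambda}{4S}|y-x/\Lambda|^2\bigr)$. So $y\mapsto K$ is globally Lipschitz with a Gaussian-in-$x$ constant, and Kantorovich--Rubinstein duality applies to any pair with finite first moment.
\item The floor $u[\mu]\ge A(t)\delta_K\,\mu(B_R)$ on $B_R$, plus Hoeffding for the conditionally i.i.d.\ survivors, controls the normalising integrals off an exponentially rare event.
\item Your observation that truncating at $R\sim\sqrt{\log N}$ would cost a factor $\exp\bigl(2R^2/((1-c)t_{\mathrm{eff}})\bigr)$, polynomial in $N$, is exactly why the global-Lipschitz route is needed.
\end{itemize}

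What each route buys: the paper's version gives brevity and the literal ``product of constants''. Yours gives an actual proof. The price is that $C'(t)$ becomes a modified stability constant times $C(t)$, plus an exponentially small term. The modified constant is built from the global Lipschitz weight, $\delta_K/2$, and a radius $R(t)$ with $\bar\nu_t(B_{R(t)})\ge 3/4$. You also argue the $\kappa_t$ clause, which the paper leaves unproved.

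One step needs tightening. On $G^c$ you need a quantitative bound on $\lambda=\lambda_1/\langle|x|\rangle_T$. Saying ``$T$ has positive mass outside any small ball'' gives only configuration-by-configuration finiteness, which cannot feed Cauchy--Schwarz.

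A uniform bound is available:
\begin{itemize}
\item As a function of $x$, $u[\mu]$ is a Gaussian mixture with common covariance $\sigma^2 I$, where $\sigma^2=2\tanh(\omega t_{\mathrm{eff}})/\omega$. Hence $\nabla^2\log u\ge-\sigma^{-2}I$.
\item Expanding at the global maximiser $x^*$ gives $T(x)\ge T(x^*)e^{-p|x-x^*|^2/(2\sigma^2)}$ with $p=1/(1-c)$.
\item Therefore $\Pr_T(B_\delta)\le p\delta^2/(2\sigma^2)\le\tfrac12$ for small $\delta$, so $\lambda\le 2\lambda_1/\delta$ for every configuration.
\item The same comparison gives $\langle|x|^2\rangle_T\le p\,m\,(\max_j|X_j|^2/\Lambda^2+2\sigma^2)$ for an $m$-atom configuration. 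This is ample against the exponentially small $\Prob(G^c)$.
\end{itemize}
The deterministic bound on $\lambda$ is also what makes \eqref{eq:kappa-voc} Lipschitz in the coefficient path in your $\kappa_t$ step.
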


\begin{proof}
Immediate from Theorem~\ref{thm:poc}(ii) and
Proposition~\ref{prop:w1-stability}, whose proof uses only Lipschitz
regularity of the Mehler kernel, Kantorovich--Rubinstein duality, the
lower bound on the smoothed field, and the Gaussian envelope --- none of
which requires the second argument to be atomic, so it applies verbatim
with $\bar\nu_t$ absolutely continuous. The nonlinear readout enters
only through Step~3 of that proof, as the local Lipschitz constant
$L_\phi$.
\end{proof}

\section{Derivation of the Macroscopic Sentiment PDE}
\label{app:derivation}

This appendix derives the macroscopic sentiment PDE \eqref{eq:pde} from
a rigorous microscopic foundation. We adopt a two-layer construction: a
primitive interacting particle system whose mean-field limit produces a
linear field $u(x,t)$, the raw attention density, and a nonlinear
readout $T = [(1-c)u]^{1/(1-c)}$ that defines the sentiment field as a
static functional transformation of $u$. This separation is what makes
the analytic structure exactly tractable: the entire
microscopic-to-macroscopic passage is captured by a linear Kolmogorov
forward equation, and all nonlinearity in $T$ enters via the pointwise
readout.

\subsection{Microscopic particle system: diffusion with killing}

Consider $N$ particles $\{X_i\}_{i=1}^N \subset \R^2$ evolving
independently. Each follows Brownian diffusion $dX_i(t) = \sqrt{2}\,
dB_i(t)$ with independent standard 2-dimensional Brownian motions $B_i$
(microscopically, mutations and chaotic exploration in the (virality,
recency) plane), and is removed from the population at the instantaneous
position-dependent rate
\[
\mathrm{rate}_{\mathrm{kill}}(X_i(t)) := (1-c)\, W(X_i(t)) =
(1-c)(\alpha + \beta|X_i(t)|^2),
\]
where the prefactor $(1-c)$ is the structural calibration choice
producing the target equation \eqref{eq:pde} after the nonlinear
readout. Economically, $(1-c)\alpha$ is uniform attention decay and
$(1-c)\beta|x|^2$ is position-dependent loss (information far from
currently-relevant attention is dropped faster). Particles are mutually
independent, so propagation of chaos holds trivially.

\subsection{Mean-field limit: the linear PDE for the raw attention
density}

Let $\tau_i$ denote the killing time of particle $i$, and define the
empirical sub-probability measure of surviving particles $u_N(A, t) :=
\frac{1}{N}\#\{i : X_i(t) \in A,\ t < \tau_i\}$. Let $u_0 \in L^1 \cap
L^\infty(\R^2)$ be the initial density. By the strong law of large
numbers, $u_N(\cdot, t) \rightharpoonup^{*} u(\cdot, t)\, dx$, where $u$
is the single-particle sub-probability density admitting the
Feynman--Kac representation
\begin{equation}\label{eq:fk}
u(x, t) = \E^x\Bigl[u_0(X_t) \exp\Bigl(-(1-c)\int_0^t W(X_s)\,
ds\Bigr)\Bigr],
\end{equation}
with expectation over the Brownian motion $X_t = x + \sqrt{2}\, B_t$.
Since $W \ge \alpha > 0$ is continuous and bounded below, $-\Delta +
(1-c)W$ generates a strongly continuous, positivity-preserving semigroup
on $L^2(\R^2)$, and the functional \eqref{eq:fk} is that semigroup
acting on $u_0$: $u(x,t) = \bigl(e^{t(\Delta - (1-c)W)} u_0\bigr)(x)$
(Karatzas and Shreve, 1991, \S5.7). Hence $u$ is the unique bounded
classical solution of the linear PDE
\begin{equation}\label{eq:fk-pde}
\partial_t u = \Delta u - (1-c) W(x)\, u, \qquad u(\cdot, 0) = u_0,
\end{equation}
which is exactly \eqref{eq:linear}. The closed-form Mehler-kernel
solution \eqref{eq:CS} follows from the Mehler structure of the operator
$-\Delta + (1-c)W$. Independence of the particles makes propagation of
chaos exact at finite $N$ up to the empirical-measure fluctuations
quantified in Theorem~\ref{thm:poc}.

\subsection{Macroscopic readout: power-law transformation to sentiment}

The sentiment field is defined as the static, pointwise, nonlinear
readout $T(x,t) := \bigl[(1-c)\, u(x,t)\bigr]^{1/(1-c)}$. By
Proposition~\ref{prop:powerlaw}, $T$ satisfies exactly the nonlinear PDE
\eqref{eq:pde}; the transformation is invertible, $u = T^{1-c}/(1-c)$.
The exponent $1/(1-c) > 1$ encodes super-linear amplification of raw
attention counts into experienced sentiment, consistent with
super-linear social-media engagement scaling (viral cascades, retweet
networks, recommender amplification). The calibration $c = 0.25$
corresponds to the mild super-linearity exponent $1/(1-c) = 4/3$. The
macroscopic nonlinearity $-(c/T)|\nabla T|^2$ is therefore a direct
consequence of the power-law readout rather than of any nonlinear
interaction in the particle system, which is why the analytic structure
stays tractable.

\subsection{Log-gradient herding as effective macroscopic behaviour}

While the microscopic particles do not literally follow log-gradient
drifts, the macroscopic equation has an equivalent reformulation
exposing the herding interpretation. Setting $v := \log T$ and using
$\Delta \log T = \Delta T/T - |\nabla T|^2/T^2$, dividing \eqref{eq:pde}
by $T$ gives
\begin{equation}\label{eq:hj}
\partial_t v = \Delta v + (1-c)|\nabla v|^2 - W(x),
\end{equation}
a viscous Hamilton--Jacobi equation with quadratic Hamiltonian $H(p) =
(1-c)|p|^2$. The substitution $w := \exp((1-c)v)$ converts \eqref{eq:hj}
to the linear equation $\partial_t w = \Delta w - (1-c)W w$, i.e.\
\eqref{eq:fk-pde}; the mapping $w \mapsto T = w^{1/(1-c)}$ recovers the
sentiment field, exactly the inverse of the power-law substitution and a
consistency check that the same Mehler solution is reached from either
direction. At a fixed point where $T$ is approximately steady,
Hamilton--Jacobi--Bellman interpretations of \eqref{eq:hj} give optimal
drifts proportional to $\nabla v = \nabla \log T$, recovering the
log-gradient herding rule heuristically; the rigorous derivation does
not depend on this interpretation.

\subsection{Connection to the ESSO algorithm}

The ESSO algorithm of Section~\ref{sec:swarm} is more elaborate than the
bare diffusion-with-killing process: chaotic Tent-map initialisation
ensures exploration diversity, the producer/scrounger/contrarian role
structure captures heterogeneity, and adaptive mutation schedules adjust
the exploration-exploitation balance. These features improve finite-$N$
approximation quality and capture empirical structure absent from the
homogeneous independent-particle baseline. The full ESSO dynamics
violate the hypotheses of Theorem~\ref{thm:poc} in four identifiable
ways: the rank-dependent updates and the $j \le N/2$ split make the
interaction non-exchangeable; the $\arg\min$ term $X_{\mathrm{worst}}$
and the discontinuous role switching violate the bounded-Lipschitz
Condition~1; and the $A^+$ reflection step is not the gradient of a
bounded-Lipschitz potential. We therefore make no mean-field claim for
the literal ESSO dynamics. The extremal terms lie outside the
reach of exchangeable mean-field theory: $X_{\mathrm{worst}}$ and the
best-producer position are governed by extreme-value rather than bulk
statistics, are discontinuous functionals of the empirical measure in
$W_1$, and converge to the edge of the limiting support at
tail-dependent rates, so a drift driven by them cannot inherit the
$O(N^{-1/2}\sqrt{\log N})$ rate. The evidence of
Section~\ref{sec:exhibit1}, centre-of-mass stabilisation below $0.01$
within ${\sim}15$ iterations, shows the empirical dynamics settle
quickly, but it does not establish a mean-field limit for the literal
algorithm.

We instead prove propagation of chaos, at the sharp two-dimensional rate
of Theorem~\ref{thm:poc}, for a regularised dynamics that keeps the
swarm mechanism intact but replaces each obstruction with a
bounded-Lipschitz mean-field surrogate: the $\arg\min$/$\arg\max$
extrema by Gibbs soft-extrema \eqref{eq:soft}, and both the hard rank
split and the threshold role switches by a single logistic gate at the
population's smoothed $p_d(t)$-quantile of fitness
\eqref{eq:role}--\eqref{eq:role-blend} below, recovering the literal
top-fraction producer rule in the sharp limit. Concretely, the
attraction to
the best producer is replaced by attraction to the soft-best position,
composed from the start with the radial truncation $\tau_R(y) :=
y\min(1, R/|y|)$ onto the ball $B_R$ of Definition~\ref{def:mapping}
(no restriction: the measurement map already lands in $B_R$, and the
sink dissipates the exterior; this is what makes $X_\star[\mu]$
$W_1$-Lipschitz without a separate truncation step, verified in
Remark~\ref{rem:W1drift}):
\begin{equation}\label{eq:soft}
X_\star[\mu] := \frac{\int_{\R^2} \tau_R(y)\, e^{\beta\phi(y)}\, \mu(dy)}
{\int_{\R^2} e^{\beta\phi(y)}\, \mu(dy)}, \qquad
b_\star[\mu](x) := \gamma\, \psi\bigl(X_\star[\mu] - x\bigr),
\end{equation}
with $\phi$ a bounded Lipschitz fitness, $\beta < \infty$ an inverse
temperature, and $\psi(z) = z/(1 + |z|)$ a bounded-Lipschitz saturation
(the literal $\arg\max$ is the singular limit $\beta \to \infty$, at
which Lipschitzness is lost). The soft-worst term is defined analogously
with $e^{-\beta\phi}$, giving the producer and scrounger candidate
drifts $b^{\mathrm{prod}}[\mu](x) := b_\star[\mu](x)$ and
$b^{\mathrm{scrounge}}[\mu](x)$, the latter the bounded-Lipschitz
log-gradient surrogate of Section~\ref{sec:esso-dynamics}.

The hard rank split ($j \le N/2$) and the threshold role switches are
both replaced by a single device that preserves their literal meaning:
a smoothed version of the top-$p_d(t)$-fraction cutoff itself, rather
than a different statistic. For $\varsigma < \infty$ (a slope parameter,
distinct from the attraction strength $\gamma$ and inverse temperature
$\beta$ of \eqref{eq:soft}) and $\sigma_\varsigma(z) :=
1/(1+e^{-\varsigma z})$, define the smoothed upper-tail mass of the
fitness distribution under $\mu$,
\begin{equation}\label{eq:role}
G_\mu(z) := \int_{\R^2}\sigma_\varsigma\bigl(\phi(y) - z\bigr)\,\mu(dy),
\qquad z \in \R,
\end{equation}
strictly decreasing and smooth in $z$ from $1$ (at $z=-\infty$) to $0$
(at $z=+\infty$); let $q_p[\mu]$ denote the unique solution of
$G_\mu(z) = p$, $p := p_d(t) \in (0,1)$, so that $q_p[\mu]$ is the
smoothed $p$-quantile of the fitness values under $\mu$. Define the
producer-role probability and the resulting role-blended drift
\begin{equation}\label{eq:role-blend}
p^{\mathrm{prod}}(x,\mu) := \sigma_\varsigma\bigl(\phi(x) -
q_{p}[\mu]\bigr), \qquad
b[\mu](x) := p^{\mathrm{prod}}(x,\mu)\, b^{\mathrm{prod}}[\mu](x) +
\bigl(1 - p^{\mathrm{prod}}(x,\mu)\bigr)\, b^{\mathrm{scrounge}}[\mu](x),
\end{equation}
so an agent is a producer with probability given by how far its fitness
sits above the population's current $p_d(t)$-quantile, recovering
exactly "producer if in the top $p_d(t)N$ by fitness" as
$\varsigma \to \infty$.

Since $\phi$ is bounded, say $|\phi| \le M_\phi$, sandwiching
$\sigma_\varsigma(-M_\phi - z) \le G_\mu(z) \le \sigma_\varsigma(M_\phi -
z)$ for every $\mu$ shows that $q_p[\mu]$ always lies in the fixed
compact interval $[a_p, b_p] := [-M_\phi - z_p,\ M_\phi - z_p]$,
$z_p := \varsigma^{-1}\log\bigl(p/(1-p)\bigr)$, \emph{independently of
$\mu$}: no truncation device is needed to keep $q_p[\mu]$, and hence
$p^{\mathrm{prod}} \in (0,1)$, under control.

\begin{assumption}[Fitness-spread non-degeneracy]\label{ass:spread}
A configuration $\mu$ is \emph{$p$-non-degenerate} if there is $\varrho_0
> 0$, depending only on $(\varsigma, M_\phi, p)$ and not on $\mu$
itself, such that
\[
-\partial_z G_\mu(z) \;=\; \varsigma\int_{\R^2}
\sigma_\varsigma\bigl(\phi(y)-z\bigr)\bigl(1 -
\sigma_\varsigma(\phi(y)-z)\bigr)\,\mu(dy) \;\ge\; \varrho_0
\qquad \text{for every } z \in [a_p, b_p].
\]
\end{assumption}

\begin{remark}[Why this is needed, what it means, and why only one side]
\label{rem:spread}
The quantity bounded below is $\varsigma$ times the smoothed density of
the pushforward of $\mu$ under $\phi$ at level $z$; the assumption says
this density does not vanish across the whole quantile range
$[a_p,b_p]$. It cannot be dropped: for the literal, unsmoothed
top-fraction rule --- the limit $\varsigma \to \infty$ --- a population
with a fitness \emph{gap} straddling the $p$-quantile has an
ill-defined or discontinuous cutoff, exactly as the soft-extremum
\eqref{eq:soft} loses Lipschitzness in the singular limit $\beta \to
\infty$. Lemma~\ref{lem:spread-baseline} below verifies it explicitly,
with a computable constant, for the baseline ($b\equiv0$) mean-field
object $\bar\nu_t$ of Theorem~\ref{thm:poc}, and it fails only for
configurations whose fitness values cluster into well-separated groups
with a genuine gap at the current quantile level --- in particular, a
finite, atomic empirical measure $\hat\mu_t^N$ can fail it outright,
with no way around this at finite $N$. The reason this does not sink
the construction is Lemma~\ref{lem:quantile-lip} below: the Lipschitz
bound on $q_p$ needs Assumption~\ref{ass:spread} for only \emph{one} of
the two configurations being compared, the other being an arbitrary
probability measure. Every comparison inside Theorem~\ref{thm:poc-esso}'s
proof --- Step~0's Picard iterates, each a (weighted) law evolved from
the explicit $\rho_0$ under some admissible drift, and Step~1's
comparison of the atomic $\hat\mu_t^N$ to the smooth $\bar\nu_t$ ---
always has at least one non-degenerate member, so the atomic side never
needs to satisfy the assumption at all.
\end{remark}

\begin{lemma}[Non-degeneracy for the baseline flow]\label{lem:spread-baseline}
Fix $R' \ge R$ large enough that $\phi(B_{R'}) \supseteq [a_p,b_p]$
(possible for any bounded continuous $\phi$ whose extremal values over
$\R^2$ are approached within some finite region, since $B_{R'} \uparrow
\R^2$ and $\phi$ is continuous). For the diffusion-with-killing
skeleton of Appendix~\ref{app:derivation}, i.e.\ $b \equiv 0$ in
Theorem~\ref{thm:poc-esso} (exactly the setting of Theorem~\ref{thm:poc}),
Assumption~\ref{ass:spread} holds for $\mu = \bar\nu_t$ at every $t \ge
0$, with an explicit constant $\varrho_0 = \varrho_0(R',t)$ depending
only on $(R', R, \omega, c, \alpha, \varsigma, M_\phi, p, h_0)$.
\end{lemma}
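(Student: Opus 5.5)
We need a lower bound $-\partial_z G_{\bar\nu_t}(z)\ge\varrho_0$ for all $z\in[a_p,b_p]$. The plan is to bound this quantity below by restricting the integral to a small neighbourhood in $B_{R'}$ where $\phi$ is close to $z$, and then bounding the density of $\bar\nu_t$ below there.

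\emph{Step 1: density lower bound on $B_{R'}$.} By Theorem~\ref{thm:poc}(i), $\bar\nu_t = u(\cdot,t)\,dx/\int u$. For the datum \eqref{eq:datum}, \eqref{eq:exactsol} gives $u(x,t)$ as $e^{-w/2}A(t)N^{-1}\sum_j K_{t_{\mathrm{eff}}}(x,X_j;\omega)$, with $t_{\mathrm{eff}}=t+h_0^2/2>0$. The sites $X_j$ lie in $B_R$. Step~1(c) of the proof of Proposition~\ref{prop:w1-stability}, run on $B_{R'}\times B_R$ with $R'\ge R$, then gives
\[
u(x,t)\ \ge\ e^{-w/2}A(t)\,\bar K\exp\bigl(-\omega(R'+R)^2(\Lambda+1)/(4S)\bigr), \qquad x\in B_{R'},
\]
where $S,\Lambda,\bar K$ are evaluated at $t_{\mathrm{eff}}$. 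The exponent is bounded using $|x|^2+|y|^2\le R'^2+R^2$ and $|x\cdot y|\le R'R$.

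For the denominator we have the exact expression $\int u\le e^{-w/2}A(t)\operatorname{sech}(\omega t_{\mathrm{eff}})$ from Remark~\ref{rem:mass}. Dividing, the amplitude $A(t)$ and the weight $e^{-w/2}$ cancel. This leaves an explicit density floor $d_0(R',t)>0$ for $\bar\nu_t$ on $B_{R'}$.

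\emph{Step 2: sigmoid-derivative floor near the level set.} Let $s(\zeta):=\sigma_\varsigma(\zeta)(1-\sigma_\varsigma(\zeta))$. This function is even, decreasing in $|\zeta|$, and satisfies $s(\zeta)\ge s(1)>0$ for $|\zeta|\le1$, with the explicit value $s(1)=e^{-\varsigma}/(1+e^{-\varsigma})^2$.

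Fix $z\in[a_p,b_p]$. By the choice of $R'$ there is $x_z\in B_{R'}$ with $\phi(x_z)=z$. Since $\phi$ is Lipschitz with constant $L_\phi^{\mathrm{fit}}$, we have $|\phi(y)-z|\le1$ on the ball $B(x_z,r_*)$ with $r_*:=\min(1/L_\phi^{\mathrm{fit}},R')$.

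The set $B(x_z,r_*)\cap B_{R'}$ contains a ball of radius $r_*/2$, uniformly in $x_z$. To see this, shift the centre inward by $r_*/2$ toward the origin when $x_z$ is near $\partial B_{R'}$. Its area is therefore at least $\pi r_*^2/4$.

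\emph{Step 3: assembly.} Combining the two steps,
\[
-\partial_zG_{\bar\nu_t}(z)\ \ge\ \varsigma\,s(1)\,d_0(R',t)\,\pi r_*^2/4 \;=:\;\varrho_0(R',t),
\]
uniformly in $z\in[a_p,b_p]$. This constant depends on $(R',R,\omega,c,\varsigma,h_0)$ and on $t$, and additionally on the fitness Lipschitz constant. If one insists on no dependence on the Lipschitz constant beyond the list in the statement, the plan is to absorb it into the choice of $R'$, or to note that $\phi$ is fixed throughout. The parameters $\alpha$ and $c$ enter only through $A(t)$ and $\omega$, and $A(t)$ cancels. The parameters $M_\phi$ and $p$ enter through the interval $[a_p,b_p]$, which fixes $R'$.

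\emph{Expected main obstacle.} The delicate point is Step~2's use of a neighbourhood of the level set. Continuity of $\phi$ alone does not give a radius that is uniform in $z$. Lipschitzness of the fitness, which \eqref{eq:soft} assumes, supplies it directly. If only continuity were available, I would use uniform continuity of $\phi$ on the compact set $\overline{B_{R'}}$ to obtain the modulus $r_*$ instead. Everything else is the Mehler-kernel positivity already established.
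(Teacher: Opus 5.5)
Your proof is correct and follows the paper's route: a Mehler-kernel lower bound on the density of $\bar\nu_t$ over $B_{R'}$, obtained from the explicit datum \eqref{eq:exactsol}, followed by a neighbourhood of a point with $\phi = z$ whose area is bounded below and on which $\sigma_\varsigma(1-\sigma_\varsigma)$ is bounded below. Your bookkeeping is slightly cleaner than the paper's: you average all summands instead of keeping one, and you divide by the exact time-$t$ mass from Remark~\ref{rem:mass} instead of by $\int u_0$, which removes the paper's stray $1/N$ and $A(t)$ factors from the constant (like the paper's, your constant still depends on $\mathrm{Lip}(\phi)$).
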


\begin{proof}
By the semigroup identity \eqref{eq:exactsol}, $u(\cdot,t) =
\hat S(\cdot,t)$ exactly: $u(x,t) = (A(t)/N)\sum_j
K_{t+h_0^2/2}(x,X_j;\omega)$ (absorbing $e^{-w/2}$ into $A$). Retaining
a single summand and using strict positivity of the Mehler kernel, for
every $x \in B_{R'}$,
\[
u(x,t) \;\ge\; \frac{A(t)}{N}\,\inf_{x \in B_{R'},\, y \in B_R}
K_{t+h_0^2/2}(x,y;\omega) \;=:\; \frac{A(t)}{N}\,
\delta_K(R',R,t+h_0^2/2) \;>\; 0,
\]
the last quantity positive and explicit by the same computation as
Step~1(c) of Proposition~\ref{prop:w1-stability}'s proof, extended
verbatim from $B_R \times B_R$ to $B_{R'} \times B_R$ (maximise the
kernel's exponent over $|x| \le R'$, $|y| \le R$ in place of $|x|=|y|=R$).
Since $\bar\nu_t(dy) = u(y,t)\,dy/\int u(\cdot,t)$ and $\int u(\cdot,t)
\le \int u_0$ by Theorem~\ref{thm:field}(iii), $\bar\nu_t$ has a density
$f_t$ satisfying
\[
f_t(y) \;\ge\; \underline f(t) \;:=\; \frac{A(t)\,
\delta_K(R',R,t+h_0^2/2)}{N\int u_0} \;>\; 0, \qquad y \in B_{R'}.
\]
By compactness of $[a_p,b_p]$ and uniform continuity of $\phi$ on the
compact set $B_{R'}$, there are $\eta > 0$ and $\ell > 0$, depending
only on $(R', \mathrm{Lip}(\phi), \varsigma)$, such that for every $z
\in [a_p,b_p]$ some $y_z \in B_{R'}$ has $|\phi(y_z) - z| \le
1/\varsigma$ (by $\phi(B_{R'}) \supseteq [a_p,b_p]$) and $B_\eta(y_z)
\cap B_{R'}$ has Lebesgue measure at least $\ell$, on which
$\sigma_\varsigma(\phi(y)-z)(1-\sigma_\varsigma(\phi(y)-z))$ is bounded
below by a fixed $c_\eta > 0$ (continuity of the integrand, uniform in
$z$ over the compact range by the same compactness argument).
Consequently
\[
-\partial_z G_{\bar\nu_t}(z) \;=\; \varsigma\int_{\R^2}
\sigma_\varsigma(\phi(y)-z)\bigl(1-\sigma_\varsigma(\phi(y)-z)\bigr)\,
\bar\nu_t(dy) \;\ge\; \varsigma\, c_\eta\, \ell\, \underline f(t)
\;=:\; \varrho_0(R',t) \;>\; 0
\]
for every $z \in [a_p,b_p]$, which is Assumption~\ref{ass:spread}.
\end{proof}

\begin{lemma}[Smoothed-quantile stability]\label{lem:quantile-lip}
If $\mu$ is $p$-non-degenerate with constant $\varrho_0$
(Assumption~\ref{ass:spread}), then for \emph{every} probability
measure $\nu$ on $\R^2$ (no condition on $\nu$ required),
\[
\bigl|q_p[\mu] - q_p[\nu]\bigr| \;\le\;
\frac{\varsigma\,\mathrm{Lip}(\phi)}{4\varrho_0}\, W_1(\mu,\nu).
\]
\end{lemma}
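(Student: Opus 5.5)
The argument is an implicit-function estimate. It compares the two defining equations $G_\mu(q_p[\mu]) = p = G_\nu(q_p[\nu])$ and uses the slope of $G_\mu$, which Assumption~\ref{ass:spread} bounds from below, to convert a gap in $G$-values into a gap in quantiles. Write $q_\mu := q_p[\mu]$ and $q_\nu := q_p[\nu]$. Both lie in the fixed interval $[a_p,b_p]$ by the sandwich bound recorded just before Assumption~\ref{ass:spread}, and that bound holds for every probability measure. So every point of the segment between $q_\mu$ and $q_\nu$ also lies in $[a_p,b_p]$, where $-\partial_z G_\mu \ge \varrho_0$.

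The first step is a lower bound on the $G_\mu$-gap. By the mean value theorem applied to the smooth function $G_\mu$ on that segment,
\[
\bigl|G_\mu(q_\mu) - G_\mu(q_\nu)\bigr| \;\ge\; \varrho_0\,|q_\mu - q_\nu| .
\]
Only $\mu$'s derivative is used, which is why no hypothesis on $\nu$ is needed. The second step rewrites the left side: since $G_\mu(q_\mu) = p = G_\nu(q_\nu)$, it equals $|G_\nu(q_\nu) - G_\mu(q_\nu)|$. This is the difference of the integrals of the single function $y \mapsto \sigma_\varsigma(\phi(y) - q_\nu)$ against $\nu$ and against $\mu$, with $q_\nu$ held fixed.

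The third step bounds that difference by Kantorovich--Rubinstein duality. Since $\sigma_\varsigma'(s) = \varsigma\,\sigma_\varsigma(s)(1-\sigma_\varsigma(s)) \le \varsigma/4$, the integrand $y \mapsto \sigma_\varsigma(\phi(y)-z)$ is Lipschitz with constant $\varsigma\,\mathrm{Lip}(\phi)/4$, uniformly in $z$. Duality then gives $|G_\nu(z) - G_\mu(z)| \le \tfrac{\varsigma\,\mathrm{Lip}(\phi)}{4}\,W_1(\mu,\nu)$. Combining this with the first step and dividing by $\varrho_0$ gives the claimed bound.

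I expect only one genuine point of care. The duality step requires $W_1(\mu,\nu)$ to be finite, which means first moments; if $W_1 = \infty$ the bound holds trivially. Otherwise the main obstacle is making sure the mean value step really uses only the derivative of $G_\mu$ on $[a_p,b_p]$. This is guaranteed because the localisation of $q_p[\cdot]$ to $[a_p,b_p]$ is proved for arbitrary probability measures. That localisation also uses the uniqueness of $q_p[\nu]$, which holds because $G_\nu$ is strictly decreasing: $\sigma_\varsigma' > 0$ everywhere, whatever $\nu$ is.
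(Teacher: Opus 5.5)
Your proposal is correct and matches the paper's proof step for step. Both arguments localise $q_p[\mu]$ and $q_p[\nu]$ to $[a_p,b_p]$ by the sandwich bound, and use only the slope of $G_\mu$ to get $|G_\mu(q_\mu)-G_\mu(q_\nu)|\ge\varrho_0|q_\mu-q_\nu|$ (the paper integrates $-\partial_z G_\mu$ by the fundamental theorem of calculus where you use the mean value theorem); both then rewrite this gap as $|G_\nu(q_\nu)-G_\mu(q_\nu)|$ via the defining equations and bound it by Kantorovich--Rubinstein duality with constant $\tfrac{\varsigma\,\mathrm{Lip}(\phi)}{4}$.
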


\begin{proof}
Write $q := q_p[\mu]$, $q' := q_p[\nu]$; both lie in $[a_p,b_p]$
regardless of any non-degeneracy, by the sandwiching argument above
(which uses only $|\phi|\le M_\phi$). Since $y \mapsto
\sigma_\varsigma(\phi(y)-z)$ takes values in $[0,1]$ and is
$(\varsigma\,\mathrm{Lip}(\phi)/4)$-Lipschitz in $y$ for every fixed $z$
(chain rule, $\sup|\sigma_\varsigma'| = \varsigma/4$), Kantorovich--Rubinstein
duality gives, for every fixed $z$ and \emph{any} $\mu,\nu$,
\begin{equation}\label{eq:G-lip-mu}
\bigl|G_\mu(z) - G_\nu(z)\bigr| \;\le\; \frac{\varsigma\,
\mathrm{Lip}(\phi)}{4}\, W_1(\mu,\nu).
\end{equation}
By the fundamental theorem of calculus applied to $G_\mu$ alone and
Assumption~\ref{ass:spread} on $[\min(q,q'),\max(q,q')] \subset
[a_p,b_p]$,
\[
\Bigl|\int_{q'}^{q}\bigl(-\partial_zG_\mu(z)\bigr)\,dz\Bigr|
\;\ge\; \varrho_0\,|q-q'|,
\qquad\text{i.e.}\qquad
\bigl|G_\mu(q)-G_\mu(q')\bigr| \;\ge\; \varrho_0\,|q-q'|,
\]
this holding regardless of whether $q \le q'$ or $q > q'$ (the integrand
is bounded below by $\varrho_0$ throughout the interval either way).
Since $G_\mu(q) = p = G_\nu(q')$, $G_\mu(q) - G_\mu(q') = G_\nu(q') -
G_\mu(q')$, which by \eqref{eq:G-lip-mu} at $z=q'$ has absolute value at
most $\varsigma\,\mathrm{Lip}(\phi)\,W_1(\mu,\nu)/4$. Hence
$\varrho_0|q-q'| \le \varsigma\,\mathrm{Lip}(\phi)\,W_1(\mu,\nu)/4$,
which is the claim. Nothing above used any property of $\nu$ beyond its
being a probability measure.
\end{proof}

\begin{lemma}[Lipschitz role-blending]\label{lem:role-lip}
Suppose that in every pair $(\mu,\nu)$ compared, at least one of $\mu,\nu$
is $p$-non-degenerate with a constant $\varrho_0$ uniform over the pairs
considered (Assumption~\ref{ass:spread}; Lemma~\ref{lem:quantile-lip}
places no condition on the other member of the pair). If
$b^{\mathrm{prod}}, b^{\mathrm{scrounge}}$ are each bounded by $B$ and
jointly Lipschitz in $(x,\mu)$ with constant $L$, then $b$ of
\eqref{eq:role-blend} is bounded by $B$ and jointly Lipschitz, on these
pairs, with constant
\[
2L \;+\; B\,\frac{\varsigma\,\mathrm{Lip}(\phi)}{4}\Bigl(1 +
\frac{\varsigma}{4\varrho_0}\Bigr).
\]
\end{lemma}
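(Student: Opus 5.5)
The bound on $b$ is immediate. Pointwise $b[\mu](x)$ is a convex combination of $b^{\mathrm{prod}}[\mu](x)$ and $b^{\mathrm{scrounge}}[\mu](x)$ with weight $p^{\mathrm{prod}}(x,\mu) \in (0,1)$, and both are bounded by $B$. For the Lipschitz estimate I would use the standard add-and-subtract decomposition. Write $p := p^{\mathrm{prod}}(x,\mu)$ and $p' := p^{\mathrm{prod}}(x',\nu)$, and abbreviate the four drifts at $(x,\mu)$ and $(x',\nu)$. Then
\[
b[\mu](x) - b[\nu](x') = p\bigl(b^{\mathrm{prod}}[\mu](x) - b^{\mathrm{prod}}[\nu](x')\bigr) + (1-p)\bigl(b^{\mathrm{scrounge}}[\mu](x) - b^{\mathrm{scrounge}}[\nu](x')\bigr) + (p - p')\bigl(b^{\mathrm{prod}}[\nu](x') - b^{\mathrm{scrounge}}[\nu](x')\bigr).
\]
The first two terms together contribute at most $L\,(|x-x'| + W_1(\mu,\nu))$, since $p + (1-p) = 1$. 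The stated constant $2L$ is a crude bound that is safe even if one bounds each weight by $1$ separately.

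The third term is where the gating enters. The difference $b^{\mathrm{prod}} - b^{\mathrm{scrounge}}$ is bounded by $2B$. This factor $2$ appears to be absent from the stated constant, so I would look for a sharper bound. Let $m := b^{\mathrm{prod}}[\nu](x')$ and $s := b^{\mathrm{scrounge}}[\nu](x')$. Writing the difference symmetrically, $|(p-p')(m-s)|$ can be controlled by pairing the third term with the bound on $b$ itself. Alternatively I would accept the constant up to this factor, noting that when both drifts take values in a ball of radius $B$, one of them can be re-centred. Failing that, I would simply carry $2B$ and remark on it.

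It remains to bound $|p - p'|$. Since $\sup|\sigma_\varsigma'| = \varsigma/4$,
\[
|p - p'| \le \frac{\varsigma}{4}\bigl(|\phi(x) - \phi(x')| + |q_p[\mu] - q_p[\nu]|\bigr) \le \frac{\varsigma}{4}\Bigl(\mathrm{Lip}(\phi)\,|x-x'| + \frac{\varsigma\,\mathrm{Lip}(\phi)}{4\varrho_0}\,W_1(\mu,\nu)\Bigr).
\]
The second inequality is Lemma~\ref{lem:quantile-lip}, applied with the non-degenerate member of the pair in the role of $\mu$. This is legitimate because that lemma places no condition on the other measure, and $q_p$ is symmetric in the roles after relabelling.

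Collecting terms, the joint Lipschitz constant with respect to $|x-x'| + W_1(\mu,\nu)$ is $2L + B\,\frac{\varsigma\,\mathrm{Lip}(\phi)}{4}\bigl(1 + \frac{\varsigma}{4\varrho_0}\bigr)$, up to the factor discussed above.

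I expect the only delicate point to be the quantile term. Here Assumption~\ref{ass:spread} must hold for at least one member of each compared pair, with $\varrho_0$ uniform over the pairs, so that a single constant results. I also need $q_p[\mu]$ and $q_p[\nu]$ to lie in $[a_p,b_p]$ for both measures, which the sandwiching argument gives for arbitrary probability measures.
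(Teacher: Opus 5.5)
Your argument is the paper's argument: the same three-term add-and-subtract decomposition, the same bound on $|p-p'|$ via $\sup|\sigma_\varsigma'|=\varsigma/4$ and Lemma~\ref{lem:quantile-lip}, and boundedness by convexity. Applying Lemma~\ref{lem:quantile-lip} with the non-degenerate member in the first slot is fine, since $|q_p[\mu]-q_p[\nu]|$ is symmetric. Your unease about the factor $2$ is justified, and the problem lies in the statement, not in your proof. The paper also bounds $|b^{\mathrm{prod}}[\nu](x')-b^{\mathrm{scrounge}}[\nu](x')|$ by $2B$ and then asserts that summing gives the stated constant. It does not. The sum is $2L+2B\,\frac{\varsigma\,\mathrm{Lip}(\phi)}{4}\bigl(1+\frac{\varsigma}{4\varrho_0}\bigr)$, or $L+2B\,\frac{\varsigma\,\mathrm{Lip}(\phi)}{4}\bigl(1+\frac{\varsigma}{4\varrho_0}\bigr)$ with your sharper handling of the first two terms.

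So drop the speculative repairs in your third paragraph; they cannot work. Re-centring both drifts by a common vector leaves $m-s$ unchanged, and the factor $2$ is genuinely needed.

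To see this, take $b^{\mathrm{prod}}\equiv Be$ and $b^{\mathrm{scrounge}}\equiv -Be$ for a unit vector $e$, so that $L=0$. Then $b[\mu](x)=B\bigl(2p^{\mathrm{prod}}(x,\mu)-1\bigr)e$, and $|b[\mu](x)-b[\nu](x')|=2B\,|p^{\mathrm{prod}}(x,\mu)-p^{\mathrm{prod}}(x',\nu)|$ exactly.

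Now choose $p=\tfrac12$ and $\phi(y)=\max(-M,\min(M,y_1))$. Let $\mu$ have $y_1$ uniform on $[-K,K]$, with $\varsigma M$ and $\varsigma(K-M)$ large. Then $q_p[\mu]=0$ and $\varrho_0\approx 1/(2K)$.

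Let $\nu$ be obtained by moving the band $\{|y_1|\le h\}$ by $\delta e_1$. This gives $W_1(\mu,\nu)\le h\delta/K$. For small $h,\delta$ you get $|b[\mu](0)-b[\nu](0)|\approx B\varsigma^2h\delta/4$. The claimed bound is only about $B\varsigma h\delta/(4K)+B\varsigma^2h\delta/8$, which is smaller once $\varsigma K>2$.

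Carry the $2B$ and state the corrected constant. Nothing downstream changes, because Theorem~\ref{thm:poc-esso} only needs some finite Lipschitz constant in \eqref{eq:driftLip}.
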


\begin{proof}
By the chain rule and Lemma~\ref{lem:quantile-lip},
\[
\bigl|p^{\mathrm{prod}}(x,\mu) - p^{\mathrm{prod}}(x',\nu)\bigr|
\;\le\; \frac{\varsigma}{4}\Bigl[\,\mathrm{Lip}(\phi)\,|x-x'| +
\bigl|q_p[\mu] - q_p[\nu]\bigr|\Bigr]
\;\le\; \frac{\varsigma\,\mathrm{Lip}(\phi)}{4}\Bigl(1 +
\frac{\varsigma}{4\varrho_0}\Bigr) d,
\]
with $d := |x-x'| + W_1(\mu,\nu)$, and $p^{\mathrm{prod}} \in [0,1]$ is
bounded. Writing the increment of \eqref{eq:role-blend} between
$(x,\mu)$ and $(x',\nu)$ as
\[
p^{\mathrm{prod}}(x,\mu)\bigl[b^{\mathrm{prod}}[\mu](x) -
b^{\mathrm{prod}}[\nu](x')\bigr] +
\bigl(1-p^{\mathrm{prod}}(x,\mu)\bigr)\bigl[b^{\mathrm{scrounge}}[\mu](x)
- b^{\mathrm{scrounge}}[\nu](x')\bigr]
\]
\[
\qquad{}+\bigl[b^{\mathrm{prod}}[\nu](x') -
b^{\mathrm{scrounge}}[\nu](x')\bigr]\bigl[p^{\mathrm{prod}}(x,\mu) -
p^{\mathrm{prod}}(x',\nu)\bigr],
\]
the first two brackets are each bounded by $L\,d$, using
$p^{\mathrm{prod}} \in [0,1]$; the third factor is bounded by $2B$ and
the fourth by the display above. Summing gives the stated constant.
Boundedness of $b$ by $B$ is immediate, since $b$ is a convex
combination of two drifts each bounded by $B$.
\end{proof}

Both $b_\star$ (via Remark~\ref{rem:W1drift}) and the role-blended $b$
of \eqref{eq:role-blend} (via Lemma~\ref{lem:role-lip}) satisfy
Condition~1 on every pair of configurations in which at least one member
is $p$-non-degenerate. This is a \emph{weaker} property than
Condition~1's literal statement (Lipschitz for \emph{every} pair in
$\mathcal{P}_2^M$, with no restriction): a finite atomic configuration
can fail Assumption~\ref{ass:spread} outright (Remark~\ref{rem:spread}),
so $b$ of \eqref{eq:role-blend} is not globally Lipschitz over all of
$\mathcal{P}_2^M$ in the sense Condition~1 states verbatim. What Steps~0
and~1 of Theorem~\ref{thm:poc-esso}'s proof use is
exactly the weaker property, and Lemma~\ref{lem:spread-general} below
shows every pair those steps compare has a non-degenerate
member: Step~1 compares the atomic $\hat\mu_t^N$
only to the smooth $\bar\nu_t$, never to another atomic configuration,
so only $\bar\nu_t$'s side needs to be non-degenerate; Step~0's Picard
iterates are each the (weighted) law of a bounded-drift diffusion
evolved from the explicit, already-smooth $\rho_0$, so each iterate is
itself non-degenerate by the same lemma. This is what makes the
regularised ESSO drift of Theorem~\ref{thm:poc-esso}
concretely specified rather than assumed. These surrogates leave the
diffusive (mutation) and dissipative (sink) parts untouched. Write
$\mathcal{P}_2^M = \{\mu :
\int |x|^2\, \mu(dx) \le M\}$ for the moment class preserved by the
dynamics (Lemma~\ref{lem:moment-cts}).

\begin{lemma}[Non-degeneracy for general bounded drift]\label{lem:spread-general}
Fix $T < \infty$, $R' \ge R$ as in Lemma~\ref{lem:spread-baseline}, and
$\eta > 0$ small enough that $B_\eta(y_z) \subset B_{R'}$ for the points
$y_z$ used there. For every admissible drift $b$ satisfying Condition~1
with bound $B$, and every $t \in [0,T]$, Assumption~\ref{ass:spread}
holds for $\mu = \bar\nu_t$ (the weighted flow of
Theorem~\ref{thm:poc-esso}), with a constant $\varrho_0 = \varrho_0(T)$
depending only on $(B, T, R, R', \omega, c, \alpha, \beta, \varsigma,
\eta, \varepsilon_0, M_\phi, p)$ and \emph{not} on which admissible $b$
is used.
\end{lemma}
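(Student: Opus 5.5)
The plan is to reduce Lemma~\ref{lem:spread-general} to the baseline argument of Lemma~\ref{lem:spread-baseline}. All of the dependence on the drift $b$ goes into a single uniform lower bound on the density of the weighted flow $\bar\nu_t$ over the ball $B_{R'}$. We first recall the structure of that flow in Theorem~\ref{thm:poc-esso}: a particle solves $dY_t = b_t(Y_t)\,dt + \sqrt2\,dB_t$ with $Y_0 \sim \rho_0$ and is killed at rate $(1-c)W(Y_t)$. Its unnormalised law is therefore
\[
u^b(x,t)\,dx = \E\bigl[\mathbf 1_{\{Y_t\in dx\}}\,e^{-(1-c)\int_0^t W(Y_s)\,ds}\bigr],
\]
and $\bar\nu_t = u^b\,dx/\int u^b$. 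Since $\int u^b(\cdot,t)\,dx \le 1$ (the killing weight is at most $1$), a lower bound for $\bar\nu_t$ on $B_{R'}$ follows from a lower bound for $u^b$ alone.

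To bound $u^b$ from below uniformly in $b$, we remove the drift by Girsanov's theorem. Let $X$ be the driftless process $X_t = Y_0 + \sqrt2 B_t$. The density of the law of $Y$ relative to that of $X$ on $[0,t]$ is
\[
\mathcal E_t = \exp\Bigl(\tfrac{1}{\sqrt2}\int_0^t b_s(X_s)\cdot dB_s - \tfrac14\int_0^t |b_s(X_s)|^2\,ds\Bigr),
\]
which is a true martingale because $|b| \le B$ (Novikov). For any Borel set $A \subset B_{R'}$, Hölder's inequality in reverse form, equivalently the Cauchy--Schwarz inequality applied to $\E[\mathbf 1_A w] = \E[\mathbf 1_A w\,\mathcal E_t^{1/2}\mathcal E_t^{-1/2}]$, gives
\[
\E[\mathbf 1_A(Y_t)\,w^Y] \;\ge\; \frac{\bigl(\E[\mathbf 1_A(X_t)\,(w^X)^{1/2}]\bigr)^2}{\E[\mathcal E_t^{-1}]}.
\]
Here $w$ denotes the killing weight, and we use $w^2 \le w$ for $w \le 1$ together with the bound $\E^X[\mathcal E_t^{-1}\cdot]$ rewritten under the $X$-measure. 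The moment $\E[\mathcal E_t^{-1}]$ is bounded by $e^{C B^2 t}$ by the usual exponential-martingale estimate. The numerator is a baseline Feynman--Kac quantity with killing rate $\tfrac12(1-c)W$. By the Mehler formula \eqref{eq:CS}, with $\omega$ replaced by $\omega/\sqrt2$ and $\alpha$ halved, it equals $\int_A u^{0}_{1/2}(x,t)\,dx$. This quantity is bounded below by $|A|\,A'(t)\,\delta_K'$ exactly as in Lemma~\ref{lem:spread-baseline}, using strict positivity of the Mehler kernel on $B_{R'}\times B_{R_0}$, where $R_0$ is chosen with $\rho_0(B_{R_0}) \ge \tfrac12$. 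That mass bound comes from the Gaussian moment in (H0) and Markov's inequality; this is where $\varepsilon_0$ enters. Letting $A$ shrink gives a pointwise density bound $f_t \ge \underline f_b(t)$ on $B_{R'}$. This bound depends on $b$ only through $B$, and it is uniform for $t \in [0,T]$ because every factor is continuous and positive in $t$. At $t=0$ we use the density of $\rho_0$ itself, a Gaussian mixture that is positive on $B_{R'}$.

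With the uniform density floor in hand, the rest is verbatim from Lemma~\ref{lem:spread-baseline}. For each $z \in [a_p,b_p]$ we pick $y_z$ with $|\phi(y_z)-z|\le 1/\varsigma$, the ball $B_\eta(y_z) \subset B_{R'}$, and the logistic-variance factor bounded below by $c_\eta$. This yields $-\partial_z G_{\bar\nu_t}(z) \ge \varsigma c_\eta |B_\eta|\,\inf_{t\le T}\underline f_b(t) =: \varrho_0(T)$, which has the stated parameter dependence.

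The main obstacle is the reverse-Hölder step: the Radon--Nikodym factor must be handled together with the multiplicative killing weight, without losing uniformity in $b$. If the Cauchy--Schwarz route proves awkward, we would instead fall back on a parabolic Harnack/Aronson lower bound for $\partial_t u = \Delta u - \nabla\!\cdot(bu) - (1-c)Wu$ on the bounded cylinder $B_{R'+1}\times[0,T]$. On that cylinder $W$ is bounded, and the constants depend only on $\|b\|_\infty \le B$.
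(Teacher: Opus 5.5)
Your route differs genuinely from the paper's. The paper never compares the drifted flow with a Mehler field. It lower-bounds $\Prob(Y_t\in B_\eta(y_z))$ by Aronson's Gaussian lower bound for the drifted transition density, combined with the density floor $\underline\rho_0$ of $\rho_0$ on $B_{R'}$; the rescaling $u=(x-w)/\sqrt t$ keeps this uniform as $t\downarrow0$. It then discards the exit event $\{\tau_{R''}\le t\}$ using the Gaussian moment $\varepsilon_0$ and the reflection principle, and bounds the Feynman--Kac weight deterministically on the good event. You absorb drift and killing in one step,
\[
\E\bigl[\mathbf 1_A(Y_t)\,w^Y\bigr]\;\ge\;\frac{\bigl(\E\bigl[\mathbf 1_A(X_t)\,(w^X)^{1/2}\bigr]\bigr)^2}{\E\bigl[\mathcal E_t^{-1}\bigr]},\qquad \E\bigl[\mathcal E_t^{-1}\bigr]\le e^{B^2t/2}.
\]
This inequality, the step you flagged as the main obstacle, is correct, and so is the full-weight variant via $w^2\le w$. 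It buys explicit constants from the Mehler formula, with no appeal to Aronson and no exit-time splitting.

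The final step, however, fails in two places as written.

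First, the bound is quadratic in the baseline mass of $A$: $\bar\nu_t(A)\ge e^{-B^2t/2}\bigl(\int_A u^0_{1/2}(x,t)\,dx\bigr)^2=O(|A|^2)$. So $\bar\nu_t(A)/|A|$ is bounded below only by a multiple of $|A|$, and letting $A$ shrink gives no density floor $f_t\ge\underline f_b(t)$ (any H\"older exponent $p>1$ leaves a factor $|A|^p$). Your closing bound $\varsigma c_\eta|B_\eta|\inf_{t\le T}\underline f_b(t)$ is therefore unsupported.

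Second, your estimate $\int_A u^0_{1/2}\ge\tfrac12|A|\,e^{-(1-c)\alpha t/2}\inf_{B_{R'}\times B_{R_0}}K_t(\cdot,\cdot;\omega/\sqrt2)$ degenerates as $t\downarrow0$. The kernel concentrates on the diagonal, with $K_t(x,y)\approx(4\pi t)^{-1}e^{-|x-y|^2/(4t)}\to0$ for $x\neq y$. Positivity and continuity hold only on $(0,T]$, and treating $t=0$ separately does not make the infimum over $[0,T]$ positive.

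Both are repaired as follows. Apply your inequality directly with $A=B_\eta(y_z)$, which is all the final step needs. Bound the half-killed mass through the density floor instead of through $\rho_0(B_{R_0})\ge\frac12$:
\[
\int_{B_\eta(y_z)}u^0_{1/2}(x,t)\,dx\;\ge\;\underline\rho_0\,e^{-(1-c)\alpha T/2}\int_{B_\eta(y_z)}\int_{B_{R'}}K_t(x,y;\omega/\sqrt2)\,dy\,dx .
\]
The double integral tends to $|B_\eta|$ as $t\downarrow0$. It is therefore bounded below by some $g_*>0$ uniformly in $t\in[0,T]$ and $z$, by continuity on a compact set or the paper's rescaling argument. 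This gives
\[
\varrho_0(T)=\varsigma c_\eta\,\underline\rho_0^{2}\,e^{-((1-c)\alpha+B^2/2)T}g_*^2,
\]
uniformly over admissible $b$. Note that the repaired argument uses only $\underline\rho_0$, not the Gaussian moment $\varepsilon_0$ that the paper's exit-time step requires.
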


\begin{proof}
Write $Y_t$ for the (unweighted) diffusion $dY_t = b[\nu_t](Y_t)\,dt +
\sqrt2\,dB_t$, $Y_0 \sim \rho_0$, whose weighted law is $\bar\nu_t$.

\emph{Step 1 (raw small-ball probability).} By Aronson's theorem
(Aronson, 1967), since $|b[\nu_t]| \le B$ and the diffusion coefficient
is the constant, uniformly elliptic $2I$, the transition density
satisfies $p_t^b(x,w) \ge c_1 t^{-1} e^{-c_2|x-w|^2/t}$ for $t \in
(0,T]$, with $c_1, c_2$ depending only on $B$ (and $T$), not on $\nu_t$
or on which admissible $b$ is used. Combining with $\rho_0(x) \ge
\underline\rho_0 > 0$ for $x \in B_{R'}$ (established via $\delta_K$ in
the proof of Lemma~\ref{lem:spread-baseline}), for any $y_z \in B_{R'}$,
\[
\Pr\bigl(Y_t \in B_\eta(y_z)\bigr) \;\ge\; \underline\rho_0
\int_{B_\eta(y_z)} \int_{B_{R'}} c_1 t^{-1} e^{-c_2|x-w|^2/t}\,dx\,dw .
\]
Substituting $u = (x-w)/\sqrt t$ turns the inner integral into
$\int_{(B_{R'}-w)/\sqrt t} e^{-c_2|u|^2}\,du$, a quantity that is
\emph{decreasing} in $t$ for $w \in B_{R'}$ (dividing the fixed set
$B_{R'}-w$ by a larger $\sqrt t$ shrinks the domain of a positive
integrand), so its value at $t \in (0,T]$ is bounded below by its value
at $t = T$. Hence
\[
\Pr\bigl(Y_t \in B_\eta(y_z)\bigr) \;\ge\; \underline g_\eta(T) \;:=\;
\underline\rho_0\, c_1 \int_{B_\eta(y_z)}\int_{(B_{R'}-w)/\sqrt T}
e^{-c_2|u|^2}\,du\,dw \;>\; 0
\]
for every $t \in (0,T]$; at $t=0$, $\Pr(Y_0 \in B_\eta(y_z)) =
\rho_0(B_\eta(y_z)) \ge \pi\eta^2\underline\rho_0$ directly, of the same
order. This bound is uniform over admissible $b$, since it used only
the bound $B$ (via Aronson) and the fixed, known $\rho_0$.

\emph{Step 2 (excluding the exit event).} Let $\tau_{R''} := \inf\{s :
|Y_s| > R''\}$. By $|Y_s| \le |Y_0| + |Y_s - Y_0|$ and $|Y_s - Y_0| \le
Bs + \sqrt2\,|B_s| \le Bt + \sqrt2\sup_{s\le t}|B_s|$ (using $|b|\le B$
pathwise), a union bound gives
\[
\Pr(\tau_{R''} \le t) \;\le\; \Pr\bigl(|Y_0| > R''/2\bigr) +
\Pr\Bigl(\sup_{s\le t}|B_s| > \tfrac{R''/2 - Bt}{\sqrt2}\Bigr) .
\]
The first term is bounded via Markov's inequality applied to
$e^{\varepsilon_0|Y_0|^2}$, using the finite exponential moment of
$\rho_0$ assumed in Theorem~\ref{thm:poc-esso}: $\Pr(|Y_0|>R''/2) \le
M_{\rho_0}\, e^{-\varepsilon_0(R''/2)^2}$ with $M_{\rho_0} := \int
e^{\varepsilon_0|y|^2}\rho_0(dy) < \infty$. The second term is bounded
by the reflection principle for planar Brownian motion (coordinatewise
reflection plus a union bound; see Karatzas and Shreve, 1991, \S2.8),
giving $\Pr(\sup_{s\le t}|B_s| > a) \le 4\,e^{-a^2/(4t)}$. Both bounds
are exponentially small in $R''^2$, uniformly over $t \in (0,T]$ and
admissible $b$ (the bound depends on $b$ only through $B$), so $R''$
can be chosen, depending only on $(B,T,\varepsilon_0,M_{\rho_0})$ and
the target $\underline g_\eta(T)$, so that
\[
\Pr(\tau_{R''} \le t) \;\le\; \tfrac12\,\underline g_\eta(T)
\qquad \text{for every } t \in (0,T].
\]

\emph{Step 3 (deterministic weight bound on the good event).} On
$\{\tau_{R''} > t\}$, $W(Y_s) = \alpha + \beta|Y_s|^2 \le \alpha + \beta
R''^2$ for all $s \le t$, so $\bar w(t) = \exp(-(1-c)\int_0^t W(Y_s)\,
ds) \ge \exp\bigl(-(1-c)(\alpha+\beta R''^2)t\bigr) =: \underline
w(R'',t)$, deterministically.

\emph{Step 4 (combine).} By Steps 1--3,
\[
\E\bigl[\bar w(t)\,\mathbf 1_{\{Y_t \in B_\eta(y_z)\}}\bigr]
\;\ge\; \underline w(R'',t)\cdot\Pr\bigl(Y_t \in B_\eta(y_z),\,
\tau_{R''}>t\bigr)
\;\ge\; \underline w(R'',t)\cdot \tfrac12\underline g_\eta(T),
\]
using $\Pr(Y_t\in B_\eta(y_z),\tau_{R''}>t) \ge \Pr(Y_t\in
B_\eta(y_z)) - \Pr(\tau_{R''}\le t) \ge \underline g_\eta(T) -
\tfrac12\underline g_\eta(T)$. Since $W \ge \alpha$ always,
$\E[\bar w(t)] \le e^{-(1-c)\alpha t}$ unconditionally, so
\[
\bar\nu_t\bigl(B_\eta(y_z)\bigr) \;=\;
\frac{\E[\bar w(t)\mathbf 1_{\{Y_t\in B_\eta(y_z)\}}]}{\E[\bar w(t)]}
\;\ge\; e^{-(1-c)\beta R''^2 t}\cdot\tfrac12\underline g_\eta(T)
\;=:\; \underline\nu(t) \;>\;0,
\]
decreasing in $t$, so $\underline\nu(t) \ge \underline\nu(T) > 0$
for all $t \in [0,T]$.

\emph{Step 5 (translate to the non-degeneracy bound).} As in the proof
of Lemma~\ref{lem:spread-baseline}, for every $z \in [a_p,b_p]$ there is
$y_z \in B_{R'}$ with $|\phi(y_z)-z|\le1/\varsigma$, and
$\sigma_\varsigma(\phi(y)-z)(1-\sigma_\varsigma(\phi(y)-z)) \ge c_\eta$
on $B_\eta(y_z)$ for a constant $c_\eta > 0$ uniform over $z \in
[a_p,b_p]$ by compactness. Hence
\[
-\partial_z G_{\bar\nu_t}(z) \;=\; \varsigma \int_{\R^2}
\sigma_\varsigma(\phi(y)-z)\bigl(1-\sigma_\varsigma(\phi(y)-z)\bigr)\,
\bar\nu_t(dy) \;\ge\; \varsigma\,c_\eta\,\bar\nu_t(B_\eta(y_z))
\;\ge\; \varsigma\,c_\eta\,\underline\nu(T) \;=:\; \varrho_0(T) \;>\; 0
\]
for every $z \in [a_p,b_p]$ and every $t \in [0,T]$, and every
admissible bounded drift $b$. This is Assumption~\ref{ass:spread}.
\end{proof}

\begin{remark}[Scope of the extension]\label{rem:spread-general}
Lemma~\ref{lem:spread-general} extends Lemma~\ref{lem:spread-baseline}
from $b\equiv0$ to every drift satisfying Condition~1: every Picard
iterate in Theorem~\ref{thm:poc-esso}'s Step~0 is non-degenerate by this
lemma (each is exactly such a weighted law, evolved from $\rho_0$ under
\emph{some} admissible bounded drift), and $\bar\nu_t$ itself is covered
for the drift that is the eventual fixed point. Combined with
Lemma~\ref{lem:quantile-lip}'s one-sided hypothesis
(Remark~\ref{rem:spread}), Condition~1's weaker, one-sided form
(Lipschitz whenever at least one compared configuration is
non-degenerate) holds on \emph{every} pair Theorem~\ref{thm:poc-esso}'s
proof compares, both in Step~0 (iterate versus iterate, both
non-degenerate) and Step~1 (the atomic $\hat\mu_t^N$ against the
non-degenerate $\bar\nu_t$). The exit-time splitting used in the proof
needs only Aronson's theorem, a Gaussian-moment Markov bound, and the
planar reflection principle, and avoids any conditional density
identity for the Feynman--Kac weight.
\end{remark}

\emph{Soft (Feynman--Kac) reformulation.} A synchronous coupling with a
shared killing clock is not available, because the killing rate $(1-c)W$
is position-dependent: two coupled particles occupy distinct positions
and a shared clock would kill them at mismatched rates, generating a
decoupling mass that is first order in the interparticle distance and
degrades the elementary coupling to $O(N^{-1/4})$. We therefore
represent killing by Feynman--Kac weights rather than by removing
particles. Let $\{X_i\}$ solve the non-killed interacting system $dX_i =
b[\hat\mu_t^N](X_i)\,dt + \sqrt{2}\,dB_i$ driven by the self-normalised
weighted measure
\[
\hat\mu_t^N := \frac{\sum_i w_i(t)\,\delta_{X_i(t)}}{\sum_i w_i(t)},
\qquad w_i(t) := \exp\!\Bigl(-(1-c)\int_0^t W(X_i(s))\,ds\Bigr) \in
(0,1].
\]
The limit is the normalised Feynman--Kac flow of McKean--Vlasov type,
$\bar\nu_t(dy) = \E[w(t)\delta_{Y_t}]/\E[w(t)]$ with $Y$ the
$b[\bar\nu]$-driven diffusion, and it coincides with the normalised
linear field $u(\cdot,t)\,dx/\!\int u$. Each weight lies in $(0,1]$
irrespective of the particle's position, so the self-normalised measure
$\hat\mu_t^N$ is always well defined; on the moment class
$\mathcal{P}_2^M$ the normalising denominator $N^{-1}\sum_i w_i(t)$
concentrates at rate $O(N^{-1/2})$ around a deterministic mass bounded
below on compact time intervals (in the uncoupled case this mass is
exactly the $p_t$ of Theorem~\ref{thm:poc}, given in closed form by
Remark~\ref{rem:mass}). Killing therefore never degenerates the
normalisation, and the weight of a particle is a Lipschitz functional
of its trajectory on bounded-moment events, which is what the
synchronous coupling below exploits.

\begin{theorem}[Propagation of chaos for the regularised ESSO]
\label{thm:poc-esso}
Let $b : \mathcal{P}_2^M \times \R^2 \to \R^2$ be the regularised ESSO
drift \eqref{eq:role-blend}, assembled from the soft-extremum attractor
\eqref{eq:soft} and the population-mean logistic role gate
\eqref{eq:role}, and assume that for some $B, L < \infty$,
\begin{equation}\label{eq:driftLip}
\sup_{\mu, x} |b[\mu](x)| \le B, \qquad
|b[\mu](x) - b[\nu](y)| \le L\bigl(|x - y| + W_1(\mu, \nu)\bigr)
\end{equation}
for all $\mu, \nu \in \mathcal{P}_2^M$ and $x, y \in \R^2$
(Conditions~1 and~2 of Section~\ref{sec:conditions}; Remark~\ref{rem:W1drift}
fully verifies \eqref{eq:driftLip} for the soft-extremum attractor.
For the role-blended $b$ of \eqref{eq:role-blend}, Lemma~\ref{lem:role-lip}
verifies \eqref{eq:driftLip} on every pair with at least one
$p$-non-degenerate member; Lemma~\ref{lem:spread-general} shows every
pair compared in Step~0 and Step~1 below has such a member, so
\eqref{eq:driftLip} holds wherever the proof uses it, though
not in the literal global sense of holding for arbitrary $\mu,\nu \in
\mathcal{P}_2^M$ --- see Remark~\ref{rem:spread-general}). Let
$\{X_i\}_{i=1}^N$ solve the weighted interacting system of the soft
reformulation above, with i.i.d.\ initial positions of law $\rho_0 :=
u_0/\!\int u_0$ satisfying $\int_{\R^2} e^{\varepsilon_0|y|^2}
\rho_0(dy) < \infty$ for some $\varepsilon_0 > 0$ (the datum
\eqref{eq:datum} is a finite Gaussian mixture and qualifies), and let
$\bar\nu_t$ be the normalised Feynman--Kac McKean--Vlasov flow, which
for $b \equiv 0$ is the normalised linear field $u(\cdot,t)\,dx/\!\int
u$ of Theorem~\ref{thm:poc}. Then for every $T < \infty$ there is $C(T)
< \infty$, depending only on $(B, L, T, \alpha, \beta, c)$ and the
initial moments, such that
\[
\sup_{t \le T}\ \E\bigl[W_1(\hat\mu_t^N, \bar\nu_t)\bigr]
\;\le\; C(T) \Bigl(\frac{\log(1+N)}{N}\Bigr)^{1/2}.
\]
The rate is the two-dimensional matching rate and cannot be improved
beyond constants (Remark~\ref{rem:sharpness}). For $b \equiv 0$ the
estimate of Theorem~\ref{thm:poc} is recovered.
\end{theorem}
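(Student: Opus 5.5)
The proof follows the classical Sznitman coupling route, adapted to Feynman--Kac weights, and runs in two steps. These are the ``Step~0'' and ``Step~1'' already referred to in Remark~\ref{rem:spread-general}.

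\emph{Step~0: well-posedness of the limit flow.} Fix $T$ and work on $C([0,T];\mathcal{P}_2^M)$ with the metric $\sup_{t\le T} e^{-\Gamma t}W_1(\nu_t,\nu'_t)$. Given a flow $\nu$, let $Y^\nu$ solve $dY=b[\nu_t](Y)\,dt+\sqrt2\,dB$ with $Y_0\sim\rho_0$. Define $\Phi(\nu)_t := \E[w^\nu(t)\delta_{Y^\nu_t}]/\E[w^\nu(t)]$, where $w^\nu(t)=\exp(-(1-c)\int_0^t W(Y^\nu_s)\,ds)$.

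Two flows $\nu,\nu'$ are compared by synchronous coupling (same $Y_0$, same $B$). By \eqref{eq:driftLip} and Gr\"onwall, $|Y^\nu_t-Y^{\nu'}_t|\le Le^{Lt}\int_0^t W_1(\nu_s,\nu'_s)\,ds$. The weights differ by at most $(1-c)\beta\int_0^t(|Y_s|+|Y'_s|)|Y_s-Y'_s|\,ds$, since $w\le1$ and $x\mapsto e^{-x}$ is $1$-Lipschitz on $[0,\infty)$. Cauchy--Schwarz with Lemma~\ref{lem:moment-cts} controls the moment factor. The denominator satisfies $\E[w]\ge e^{-(1-c)(\alpha+\beta R''^2)T}/2$ via the exit-time bound of Lemma~\ref{lem:spread-general}, Step~2. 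A quotient estimate (Fact~A of Proposition~\ref{prop:w1-stability}, tested against $1$-Lipschitz functions with Kantorovich--Rubinstein) then gives
\[
W_1\bigl(\Phi(\nu)_t,\Phi(\nu')_t\bigr)\le C\int_0^t W_1(\nu_s,\nu'_s)\,ds .
\]
For $\Gamma$ large, $\Phi$ is a contraction. Every iterate is non-degenerate by Lemma~\ref{lem:spread-general}, so Lemma~\ref{lem:role-lip} justifies the use of \eqref{eq:driftLip} at each comparison. This yields a unique fixed point $\bar\nu$.

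\emph{Step~1: coupling.} Take i.i.d.\ copies $Y_i$ of the nonlinear process, with $dY_i=b[\bar\nu_t](Y_i)\,dt+\sqrt2\,dB_i$ and $Y_i(0)=X_i(0)$, driven by the same Brownian motions as the $X_i$. Let $\bar w_i$ be their weights and $\tilde\mu_t^N$ the self-normalised weighted empirical measure of the $Y_i$. Split
\[
W_1(\hat\mu^N_t,\bar\nu_t)\le W_1(\hat\mu^N_t,\tilde\mu^N_t)+W_1(\tilde\mu^N_t,\bar\nu_t).
\]

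\emph{The i.i.d.\ term.} Write $\tilde\mu^N_t$ as a ratio of $N^{-1}\sum_i\bar w_i\delta_{Y_i}$ to $N^{-1}\sum_i\bar w_i$. The denominator concentrates at rate $N^{-1/2}$ around $\E\bar w\ge$ a positive constant. The numerator is an i.i.d.\ empirical measure, weighted by bounded weights, of a law with Gaussian moments (Aronson upper bounds plus the exponential moment of $\rho_0$). I plan to handle the weights by viewing the pairs $(Y_i,\bar w_i)$ as i.i.d.\ in $\R^2\times[0,1]$, or equivalently by applying the two-dimensional matching estimate to the reweighted law. Either way this gives $C(\log(1+N)/N)^{1/2}$, exactly as in Theorem~\ref{thm:poc}(ii).

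\emph{The coupling term.} This is bounded by $C N^{-1}\sum_i\bigl(|X_i-Y_i|+|w_i-\bar w_i|(1+|Y_i|)\bigr)$, with the same normalisation argument for the denominator. Along the coupling,
\[
\tfrac{d}{dt}|X_i-Y_i|\le L\bigl(|X_i-Y_i|+W_1(\hat\mu^N_t,\bar\nu_t)\bigr),
\]
using \eqref{eq:driftLip} on the pair $(\hat\mu^N_t,\bar\nu_t)$, which is admissible because $\bar\nu_t$ is non-degenerate. The weight differences are controlled by the path differences as in Step~0.

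\emph{Closing the estimate.} Let $e(t)=\E[N^{-1}\sum_i\sup_{s\le t}|X_i-Y_i|]$. Combining the two terms gives
\[
e(t)\le C\int_0^t\bigl(e(s)+(\log(1+N)/N)^{1/2}\bigr)\,ds,
\]
and Gr\"onwall yields the rate for $e$, hence for $\E W_1$, uniformly over $t\le T$.

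\emph{Main obstacle.} The hard part is the weight factor $|Y_i|\,|X_i-Y_i|$: the killing rate is quadratic, so the weight difference is only locally Lipschitz in the path and the Gr\"onwall inequality does not close linearly. I would first truncate on the event $\{\sup_{s\le T}|Y_i|\vee|X_i|\le R''\}$, on which the weights are Lipschitz with constant $2(1-c)\beta R''T$. Off this event I would use $w\le1$, the Gaussian exit bound of Lemma~\ref{lem:spread-general}, Step~2 (valid for $X_i$ as well, since $|b|\le B$), and Cauchy--Schwarz. Choosing $R''\sim\sqrt{\log N}$ makes the off-event contribution $O(N^{-1})$. The price is a factor $\log N$ inside the Gr\"onwall exponent, which I expect must be absorbed by a finer bound. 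If it cannot be absorbed, the fallback is an alternative: use instead that $\E[w_i-\bar w_i]$ involves $\E[|Y|^2]$-weighted differences controlled by second moments, i.e.\ a weighted $W_1$ bound in the spirit of Fact~A.
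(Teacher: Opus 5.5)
\textbf{Gap 1: the weighted fluctuation term.} Your Steps~0 and~1 match the paper's Steps~0--1: synchronous coupling to i.i.d.\ nonlinear copies, and non-degeneracy of $\bar\nu_t$ to license the Lipschitz bound. Step~0 is sound; the paper just gets the lower bound on $\E[\bar w(t)]$ more cheaply from Jensen.

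The genuine gap is your ``i.i.d.\ term'' $W_1(\tilde\mu^N_t,\bar\nu_t)$, which neither of your two routes reaches at the claimed rate. The matching estimate behind Theorem~\ref{thm:poc}(ii) concerns i.i.d.\ draws \emph{from} the target law. There, genuine killing makes the survivors exactly such a sample. In the soft reformulation nobody is killed: the $Y_i(t)$ are i.i.d.\ from the \emph{unweighted} law of $Y_t$, and $\tilde\mu^N_t$ is a self-normalised importance reweighting of them. So there is no sample of $\bar\nu_t$ to apply the theorem to. Lifting to pairs $(Y_i,\bar w_i)\in\R^2\times[0,1]$ does not help: it is a three-dimensional problem with empirical $W_1$ rate $N^{-1/3}$, and the test functions $(y,w)\mapsto w f(y)$ are not even globally Lipschitz there.

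The missing idea is the paper's Step~2. Take independent uniforms $U_i$ and accept $i$ iff $U_i\le\bar w_i(t)$. Then $\Prob(i \text{ accepted},\, Y_i(t)\in B)=\E[\bar w(t)]\,\bar\nu_t(B)$, so conditionally on their Binomial number the accepted positions are exactly i.i.d.\ $\bar\nu_t$. This recreates the killing structure of Theorem~\ref{thm:poc}(i), and the matching bound plus a Chernoff split handles the accepted measure. What remains is a conditionally centred sum with independent bounded multipliers $\mathbf 1_{\{U_i\le\bar w_i\}}-\bar w_i$.

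Bound that remainder directly in $W_1$ with a matching-type argument; it should adapt, since those proofs use only independence and second moments. In $d=2$ the Lipschitz class is borderline non-Donsker, so this multiplier process is already of matching order $\sqrt{\log N/N}$, not $N^{-1/2}$, even in $d_{\mathrm{BL}}$. The paper's Step~2(b) asserts an $O(N^{-1/2})$ bound there; once corrected and combined with its truncation at $\rho_N\sim\sqrt{\log N}$, that route gives only $\log N/\sqrt N$.

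\textbf{Gap 2: closing the Gr\"onwall estimate.} This is the one you flag, and your remedy does not merely lose a logarithm; it breaks the estimate. On $\{\sup_{s\le T}|X_i|\vee|Y_i|\le R''\}$, the weights' Lipschitz constant and the factor $|Y_i|$ in the transport cost each contribute $R''$. So the Gr\"onwall constant is of order $R''^2\sim\log N$, and exponentiating produces $N^{CT}$.

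The paper does not truncate. It keeps $|X_i|+|Y_i|$ inside the expectation and applies Cauchy--Schwarz against uniform fourth moments (bounded drift, Gaussian noise, Gaussian moment of $\rho_0$), so every constant is $N$-independent.

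One clean way to make this airtight:
\begin{enumerate}
\item Under synchronous coupling, pathwise Gr\"onwall gives $\sup_{s\le t}|X_i(s)-Y_i(s)|\le Le^{Lt}\int_0^t D_s\,ds$ with $D_s:=W_1(\hat\mu^N_s,\bar\nu_s)$. This bound is the same for every $i$.
\item Hence $|w_i-\bar w_i|\le\Lambda_i\,Le^{Lt}\int_0^t D_s\,ds$ with $\Lambda_i:=(1-c)\beta\int_0^t(|X_i|+|Y_i|)\,ds$.
\item On the event that both normalisers $N^{-1}\sum_i w_i$ and $N^{-1}\sum_i\bar w_i$ are bounded below (complement exponentially small), the transport bound reads $D_t\le F_t+\Xi_N\int_0^t D_s\,ds$. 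Here $F_t:=W_1(\tilde\mu^N_t,\bar\nu_t)$, and $\Xi_N$ is a random factor, uniform in $t\le T$, built from empirical averages of products of path maxima.
\item Because $|b|\le B$, those path maxima are dominated by i.i.d.\ sub-Gaussian functions of $(X_i(0),B_i)$. Such averages have exponential moments bounded uniformly in large $N$.
\item Gr\"onwall and Cauchy--Schwarz then give $\E D_t\le C(T)\sup_{s\le t}\|F_s\|_{L^2}$ up to exponentially small terms, with no $\log N$ in any exponent.
\end{enumerate}
You then need the fluctuation bound from Gap~1 in $L^2$ rather than $L^1$.
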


\begin{proof}
\emph{Step 0 (limit flow).} On $C([0,T]; \mathcal{P}_2^M)$ the map
sending a candidate flow $(\nu_t)$ to the normalised Feynman--Kac flow
of $dY = b[\nu_t](Y)\,dt + \sqrt{2}\,dB$ weighted by $\bar w(t) =
\exp(-(1-c)\int_0^t W(Y_s)\,ds)$ is a contraction for small $T$ in
$\sup_t W_1$: the drift is $(x,\mu)$-Lipschitz by \eqref{eq:driftLip},
the weights lie in $(0,1]$ and are Lipschitz functionals of the path on
bounded-moment events, and by Jensen the normalising mass is bounded
below,
\[
  \E[\bar w(t)] \;\ge\; \exp\Bigl(-(1-c)\bigl(\alpha t + \beta\int_0^t
  M(s)\,ds\bigr)\Bigr) \;=:\; \underline p_T \;>\; 0 .
\]
Iterating over subintervals gives a unique flow on $[0,T]$ (Sznitman,
1991; M\'el\'eard, 1996). This classical contraction argument compares
candidate flows generically, and Lemma~\ref{lem:role-lip} alone only
controls comparisons in which one side is non-degenerate; but every
Picard iterate arising here is, by construction, a weighted law
evolved from the explicit $\rho_0$ under some admissible bounded drift,
hence non-degenerate by Lemma~\ref{lem:spread-general}, so every pair
of iterates compared in the iteration has a non-degenerate member and
\eqref{eq:driftLip} applies wherever this argument needs it. Since $b$
is bounded and the initial law has a
Gaussian moment, Girsanov and Theorem~\ref{thm:field}(iv) give
$\bar\nu_t$ a Gaussian envelope, uniformly for $t \le T$; in particular
$M_q(t) := \int |x|^q \bar\nu_t(dx) < \infty$ for every $q$.

\emph{Step 1 (synchronous coupling).} Couple the particles with
i.i.d.\ copies $\{Y_i\}$ of the nonlinear diffusion, driven by the same
Brownian motions and initial conditions; let $\bar w_i$ be their weights
and $\hat\nu_t^N$ their self-normalised weighted empirical measure. The
Lipschitz drift gives
\[
  |X_i(t) - Y_i(t)| \;\le\; L\int_0^t \bigl(|X_i - Y_i| +
  W_1(\hat\mu_s^N, \bar\nu_s)\bigr)\,ds .
\]
Since $W(x) - W(y) = \beta(|x|^2 - |y|^2)$, weight differences obey
$|w_i - \bar w_i| \le (1-c)\beta\int_0^t (|X_i| + |Y_i|)|X_i -
Y_i|\,ds$. Transporting the weighted atoms of $\hat\mu_t^N$ onto those
of $\hat\nu_t^N$ costs at most
\[
  W_1(\hat\mu_t^N, \hat\nu_t^N) \;\le\;
  \sum_i \bar a_i\,|X_i(t) - Y_i(t)|
  \;+\; \sum_i |a_i - \bar a_i|\,\bigl(|Y_i(t)| + M_1(t)\bigr),
\]
where $a_i, \bar a_i$ are the normalised weights; by Cauchy--Schwarz and
the uniform fourth-moment bound (bounded drift, Gaussian noise) both
terms are controlled by $\sup_i \E[|X_i(t) - Y_i(t)|^2]^{1/2}$. On the
event $\{N^{-1}\sum_i \bar w_i \ge \underline p_T/2\}$, whose complement
has probability $\le e^{-N \underline p_T^2/2}$ by Hoeffding, the
self-normalisation contributes the factor $2/\underline p_T$; on the
complement, $W_1(\hat\mu_t^N,\hat\nu_t^N)$ is bounded by the sum of
first moments and Cauchy--Schwarz makes the contribution exponentially
small. Gr\"onwall then yields
\[
  \sup_{t \le T}\E\bigl[W_1(\hat\mu_t^N, \hat\nu_t^N)\bigr]
  \;\le\; C_1(T)\Bigl(\sup_{t \le T}\E\bigl[W_1(\hat\nu_t^N,
  \bar\nu_t)\bigr] + N^{-1/2}\Bigr).
\]

\emph{Step 2 (i.i.d.\ weighted fluctuation).} The pairs $(Y_i, \bar
w_i)_{i \le N}$ are i.i.d. Let $(U_i)_{i \le N}$ be i.i.d.\ uniform on
$[0,1]$, independent of everything else, and set $A_N := \{i \le N : U_i
\le \bar w_i(t)\}$ with accepted empirical measure $\tilde\nu_t^N :=
|A_N|^{-1}\sum_{i \in A_N}\delta_{Y_i(t)}$ on $\{A_N \ne \emptyset\}$.

\emph{(a) Accepted particles are i.i.d.\ from the limit law.} For Borel
$B$, conditioning on the path and using $U_i \perp Y_i$,
\[
  \Prob\bigl(i \in A_N,\; Y_i(t) \in B\bigr)
  = \E\bigl[\bar w(t)\,\mathbf 1_{\{Y_t \in B\}}\bigr]
  = \E[\bar w(t)]\;\bar\nu_t(B),
\]
so acceptance tilts the path law exactly by the Feynman--Kac weight; by
independence across $i$ and exchangeability, conditionally on $|A_N| = m
\ge 1$ the accepted positions are i.i.d.\ with law $\bar\nu_t$. By
Step~0 the law $\bar\nu_t$ has a Gaussian envelope, hence a finite
moment of some order $q > 2$, so the two-dimensional Kantorovich-$W_1$
estimate for empirical measures (Hahn and Shao, 1992;
Yukich, 1992; see Ledoux, 2019, eq.~(8) and Thm.~5) gives
\[
  \E\bigl[W_1(\tilde\nu_t^N, \bar\nu_t) \,\big|\, |A_N| = m\bigr]
  \;\le\; C\,\Bigl(\frac{\log(1+m)}{m}\Bigr)^{1/2}.
\]
The count $|A_N|$ is $\mathrm{Binomial}(N, \E[\bar w(t)])$ with
$\E[\bar w(t)] \ge \underline p_T > 0$, so Chernoff's lower-tail bound
makes $\{|A_N| < \tfrac12 \underline p_T N\}$ exponentially negligible;
on its complement, monotonicity of $m \mapsto (\log(1+m)/m)^{1/2}$ gives
the bound $C(T)(\log(1+N)/N)^{1/2}$.

\emph{(b) Weighted versus accepted empirical measures.} Conditionally on
the paths $(Y_i, \bar w_i)_{i \le N}$, the acceptance indicators are
independent Bernoullis with means $\bar w_i(t)$. For a \emph{single} $f$
with $\|f\|_\infty \le 1$ and $\mathrm{Lip}(f) \le 1$, the difference of the
unnormalised sums $N^{-1}\sum_i(\mathbf 1_{\{i \in A_N\}} - \bar
w_i)f(Y_i)$ has conditional mean zero and conditional variance
$O(N^{-1})$, each summand being bounded by $1$; taking the supremum
over the whole class of such $f$ (needed for $d_{\mathrm{BL}}$, not
just one function) is controlled by the same order via symmetrisation
and a generic chaining (entropy-integral) bound: the class is uniformly bounded and
1-Lipschitz, and both measures carry the Gaussian envelope of Step~0,
so the effective covering number of the class restricted to the $N$
points is polynomial in $N$ at the resolution $N$ points can resolve,
giving a Rademacher complexity of the same order $O(N^{-1/2})$ --- this
is the same chaining technique underlying the matching-theorem citation
of part (a) just above, applied to a bounded rather than an unbounded
function class. On the event
$\{N^{-1}\sum_i \bar w_i \ge \underline p_T/2\}$, whose complement is
exponentially small by Hoeffding, the same holds after
self-normalisation, so
\[
  \E\bigl[d_{\mathrm{BL}}(\hat\nu_t^N, \tilde\nu_t^N)\bigr]
  \;\le\; C_2(T)\,N^{-1/2}.
\]
To upgrade to $W_1$, let $f$ be $1$-Lipschitz with $f(0) = 0$, so $|f(x)|
\le |x|$, and truncate: $f_\rho := (-\rho)\vee(f \wedge \rho)$ satisfies
$\|f_\rho\|_\infty \le \rho$ and $\mathrm{Lip}(f_\rho) \le 1$, so
$f_\rho/\rho$ is admissible for $d_{\mathrm{BL}}$ once $\rho \ge 1$.
Then
\[
  \int f\,d(\hat\nu_t^N - \tilde\nu_t^N)
  \;\le\; \rho\, d_{\mathrm{BL}}(\hat\nu_t^N, \tilde\nu_t^N)
  \;+\; \int_{\{|x| > \rho\}} |x| \,d(\hat\nu_t^N + \tilde\nu_t^N).
\]
Both measures carry the uniform Gaussian envelope of Step~0, so the tail
integral has expectation $O(e^{-c\rho^2})$; choosing $\rho_N =
c^{-1/2}\sqrt{\log N}$ makes it $O(N^{-1/2})$, while the bulk term is
$\rho_N\,d_{\mathrm{BL}} \lesssim \sqrt{\log N}\,N^{-1/2}$. Taking the
supremum over $f$ gives $\E[W_1(\hat\nu_t^N, \tilde\nu_t^N)] \le
C_3(T)(\log(1+N)/N)^{1/2}$.

The triangle inequality $W_1(\hat\mu_t^N, \bar\nu_t) \le
W_1(\hat\mu_t^N, \hat\nu_t^N) + W_1(\hat\nu_t^N, \tilde\nu_t^N) +
W_1(\tilde\nu_t^N, \bar\nu_t)$ combines Step~1 with (a) and (b) and
completes the proof.
\end{proof}

\begin{remark}[Verifying the $W_1$-Lipschitz hypothesis]
\label{rem:W1drift}
Condition \eqref{eq:driftLip} is stronger than Lipschitz continuity in
$W_2$, since $W_1 \le W_2$, and it is what Step~1 requires in order to
close Gr\"onwall in the metric of the conclusion. It holds for
$X_\star[\mu]$ of \eqref{eq:soft} because the truncation $\tau_R$ is
already built into its definition: with $\phi$ bounded Lipschitz and
$\beta < \infty$, both integrands are bounded and Lipschitz, and the
denominator is bounded below by $e^{-\beta\|\phi\|_\infty}$, so
Kantorovich--Rubinstein duality makes $\mu \mapsto X_\star[\mu]$
$W_1$-Lipschitz; composing with the bounded-Lipschitz saturation
$\psi(z) = z/(1+|z|)$ gives \eqref{eq:driftLip}. Without the truncation
the integrand $y\,e^{\beta\phi(y)}$ is Lipschitz but unbounded, and
duality does not apply. The role-blending drift of
\eqref{eq:role-blend} needs no such truncation either: $\phi$ being
bounded already confines $q_p[\mu]$ to the fixed compact interval
$[a_p,b_p]$ independently of $\mu$, and $p^{\mathrm{prod}} \in (0,1)$
regardless, so Kantorovich--Rubinstein duality applies directly to
$G_\mu(z)$ at each fixed $z$; the Lipschitz bound for $p^{\mathrm{prod}}$
itself, under Assumption~\ref{ass:spread}, is
Lemma~\ref{lem:quantile-lip}, used in Lemma~\ref{lem:role-lip}.
\end{remark}

\begin{remark}[Sharpness, and why the estimate is stated in $W_1$]
\label{rem:sharpness}
The rate $\sqrt{\log N/N}$ is the two-dimensional matching rate of Ajtai
et al.\ (1984), and in $d = 2$ it is exactly attained in $W_1$: for the
standard Gaussian on $\R^2$, $\E[W_1(\mu_n,\mu)] \asymp \sqrt{\log n/n}$
with matching upper and lower bounds (Ledoux, 2019, Thm.~5), the upper
bound holding for any law with a finite moment of order $q > 2$
(\emph{ibid.}, eq.~(8)). It cannot be recovered in $W_2$ for a
Gaussian-tailed law: there $\E[W_2^2(\mu_n,\mu)] \asymp (\log n)^2/n$
(Ledoux, 2019, Thm.~1, upper bound; the matching lower bound is due to
Talagrand, by a scaling argument building on the Ajtai--Koml\'os--Tusn\'ady
theorem, reported in Ledoux, 2018), so
$\E[W_2] \asymp \log n/\sqrt n$ carries an irreducible extra
$\sqrt{\log n}$. The obstruction is structural rather than technical:
the matching rate $\sqrt{\log n/n}$ in $W_2$ is a theorem for densities
bounded above \emph{and below} on a compact domain, whereas $\bar\nu_t$
has Gaussian tails on all of $\R^2$ --- tails inherited from the
quadratic sink $\beta|x|^2$, which is precisely the structure that makes
the Mehler closed form available (Condition~5 of
Section~\ref{sec:conditions}). Compact confinement would restore the
compact-domain hypotheses at the cost of replacing \eqref{eq:kernel} by
a Bessel-type eigenfunction expansion. Since
Proposition~\ref{prop:w1-stability} and Corollary~\ref{cor:missing} are
stated in $W_1$, and $\kappa_t$ depends on the configuration only
through them, nothing downstream requires $W_2$.

One scope point is worth making precise. Ledoux's matching lower bound
(Thm.~5) is established for the standard Gaussian specifically, while
$\bar\nu_t$ is a Gaussian \emph{mixture}; we have not derived a
matching lower bound for $\bar\nu_t$ itself, and ``sharp'' should be
read accordingly. What is fully established, and is all this paper
needs ``sharp'' to mean: the $O(\sqrt{\log N/N})$ \emph{upper} bound
(eq.~(8)) applies to $\bar\nu_t$ directly, since it holds for
\emph{any} law with a finite moment of order $q>2$, and no faster rate
can hold as a \emph{general} theorem under that same hypothesis, since
the standard Gaussian --- itself an instance of the hypothesis ---
already saturates it. The rate is therefore sharp as a statement about
the moment class as a whole, which is exactly what
Theorem~\ref{thm:poc}'s upper bound requires.
\end{remark}

\section{Proofs of the Power-Law Substitution and the Sentiment-Field
Bounds}
\label{app:powerlaw}

\subsection{Proof of Proposition~\ref{prop:powerlaw}}

Let $c \in (0,1)$, $W$ continuous, $u \in C^{2,1}$ strictly positive,
and $T := \bigl[(1-c)u\bigr]^{1/(1-c)}$. The map $\phi(z) :=
[(1-c)z]^{1/(1-c)}$ is real-analytic and strictly positive on $(0,
\infty)$, so $T = \phi \circ u \in C^{2,1}$ with $T > 0$; the inverse
$\phi^{-1}(z) = z^{1-c}/(1-c)$ is $C^\infty$, so the substitution is
bijective in this regularity class. Differentiating $u = T^{1-c}/(1-c)$,
\[
\partial_t u = T^{-c}\, \partial_t T, \qquad \nabla u = T^{-c}\, \nabla
T,
\]
and applying $\nabla\cdot(f\mathbf{v}) = (\nabla f)\cdot\mathbf{v} +
f(\nabla\cdot\mathbf{v})$ with $f = T^{-c}$, $\mathbf{v} = \nabla T$,
using $\nabla(T^{-c}) = -cT^{-c-1}\nabla T$,
\begin{equation}\label{eq:divergence}
\Delta u = -c\, T^{-c-1}|\nabla T|^2 + T^{-c}\Delta T.
\end{equation}
Substituting into \eqref{eq:linear} and using $u = T^{1-c}/(1-c)$, the
factor $(1-c)$ cancels:
\[
T^{-c}\, \partial_t T = T^{-c}\Delta T - c\, T^{-c-1}|\nabla T|^2 - W\,
T^{1-c}.
\]
Multiplying by $T^c$ (invertible since $T > 0$) gives, term by term,
$\partial_t T = \Delta T - (c/T)|\nabla T|^2 - W(x)T$, which is
\eqref{eq:pde}. The reverse direction re-does the same calculation
multiplying by $T^{-c}$ instead: assuming $T$ solves \eqref{eq:pde},
reading \eqref{eq:divergence} in reverse gives $\partial_t u = \Delta u
- (1-c)W u$, i.e.\ \eqref{eq:linear}. \qed

\begin{remark}[Structural origin of the identity]\label{rem:structural}
The cancellation is that the $-cT^{-c-1}|\nabla T|^2$ term
produced by the divergence-of-product expansion is exactly what
\eqref{eq:pde} requires in its $-(c/T)|\nabla T|^2$ term after
multiplication by $T^c$. No additional algebraic identity is invoked;
the gradient-squared nonlinearity in $T$ corresponds, under the
substitution, to a linear sink-type term in $u$. This is precisely why
the substitution works only for the specific nonlinearity
$-(c/T)|\nabla T|^2$: any other functional form of the herding flux
would leave residual nonlinear $u$-terms after substitution, and the
closed-form Mehler solution would not be available.
\end{remark}

\subsection{Proof of Theorem~\ref{thm:field}}

\emph{Existence.} For $T_0 \in L^1 \cap L^\infty$ with $T_0 > 0$ a.e.,
set $u_0 := T_0^{1-c}/(1-c) \in L^1 \cap L^\infty$, and define $u$ by
\eqref{eq:CS}. The Mehler kernel is smooth and strictly positive on
$\R^2 \times \R^2 \times (0,\infty)$ with the semigroup property.
Semigroup theory for the harmonic-oscillator Hamiltonian $-\Delta +
(1-c)\beta|x|^2$ on $L^2(\R^2)$ (Evans, 2010, Ch.~6) gives $u \in
C^\infty(\R^2 \times (0,\infty))$, $u > 0$ for $t > 0$, and $u$
satisfies \eqref{eq:linear}. Setting $T := [(1-c)u]^{1/(1-c)}$,
Proposition~\ref{prop:powerlaw} gives $T \in C^{2,1}$, $T > 0$, solving
\eqref{eq:pde}, with $T(\cdot,t) \to T_0$ in $L^1 \cap L^\infty$ as $t
\downarrow 0$ by semigroup continuity.

\emph{Uniqueness.} If $T_1, T_2$ are strictly positive classical
solutions with the same initial data, then $u_i := T_i^{1-c}/(1-c)$ both
solve \eqref{eq:linear} with datum $u_0$; the linear equation has a
unique bounded classical solution (the operator $-\Delta + (1-c)W$ with
$W \ge \alpha > 0$ generates a strongly continuous contraction
semigroup), so $u_1 = u_2$ and hence $T_1 = T_2$.

\emph{(i) Positivity.} The Mehler kernel is strictly positive, and $u_0
\ge 0$, $u_0 \not\equiv 0$, so \eqref{eq:CS} is strictly positive for $t
> 0$; then $T = [(1-c)u]^{1/(1-c)} > 0$.

\emph{(ii) Boundedness.} Apply the parabolic maximum principle to
\eqref{eq:pde}. At an interior maximum $(x^*, t^*)$, $\nabla T = 0$,
$\Delta T \le 0$, $\partial_t T \ge 0$, so \eqref{eq:pde} gives
$\partial_t T = \Delta T - 0 - WT \le -\alpha T < 0$, contradicting
$\partial_t T \ge 0$ unless the maximum is $\le \sup_x T_0$. Gaussian
decay (iv) rules out escape of the supremum to infinity.

\emph{(iii) Mass monotonicity.} Integrating \eqref{eq:pde} over $\R^2$
with vanishing boundary terms (justified by (iv)), $\int \Delta T\, dx =
0$, and both $-c\int |\nabla T|^2/T\, dx \le 0$ (strict unless $T$
constant) and $-\int WT\, dx \le -\alpha\int T\, dx < 0$ give
$\frac{d}{dt}\int T\, dx < 0$ strictly whenever $T \not\equiv 0$. (At
the level of $u$, Remark~\ref{rem:mass} gives the exact law.)

\emph{(iv) Gaussian decay.} Writing $\Sigma_t := \sinh(\omega t)$,
$\Lambda_t := \cosh(\omega t)$ and completing the square, using
$\Lambda_t^2 - \Sigma_t^2 = 1$ and Cauchy--Schwarz $x \cdot y \le
|x||y|$,
\[
(|x|^2 + |y|^2)\Lambda_t - 2x\cdot y \ge |x|^2\,
\frac{\Sigma_t^2}{\Lambda_t},
\]
so from \eqref{eq:kernel},
\[
K_t(x, y; \omega) \le \frac{\omega}{4\pi\Sigma_t}
\exp\Bigl(-\frac{\omega\Sigma_t}{4\Lambda_t}|x|^2\Bigr)
= \frac{\omega}{4\pi\Sigma_t}
\exp\Bigl(-\frac{\omega}{4}\tanh(\omega t)|x|^2\Bigr),
\]
uniform in $y$. Hence $u(x, t) \le C(t)e^{-\lambda_u(t)|x|^2}$ with
$C(t) = e^{-(1-c)\alpha t}\omega\|u_0\|_{L^1}/(4\pi\Sigma_t)$ and
$\lambda_u(t) = \omega\tanh(\omega t)/4$. Applying $T =
[(1-c)u]^{1/(1-c)}$ gives Gaussian decay for $T$ with rate $\lambda(t) =
\lambda_u(t)/(1-c) = \omega\tanh(\omega t)/(4(1-c)) > 0$. \qed

\section{Proof of Theorem~\ref{thm:multi} and
Proposition~\ref{prop:multi-apriori}: Multi-Asset Existence,
Uniqueness, and the A Priori Bound}
\label{app:multiasset}

Apply the power-law substitution $u^m := (T^m)^{1-c^m}/(1-c^m)$
componentwise. Treating the cross-term as an inhomogeneous source,
\eqref{eq:multi} becomes
\[
\partial_t u^m = \Delta u^m - (1-c^m)W^m(x)\, u^m - \sum_{l \neq m}
\beta_{ml}\, G^{ml}(u^m, u^l, \nabla u^m, \nabla u^l),
\]
with $G^{ml} = \bigl[(1-c^l)u^l\bigr]^{c^l/(1-c^l)}
\bigl[(1-c^m)u^m\bigr]^{-1}(\nabla u^m \cdot \nabla u^l)$, smooth and
Lipschitz on the set $\mathcal{O}_{\delta,R} = \{u^m, u^l \in [\delta,
R],\ |\nabla u^m|, |\nabla u^l| \le R\}$.

\paragraph{Unperturbed solution.} When all $\beta_{ml} = 0$, the system
decouples into $M$ independent linear equations with the closed-form
Mehler solution \eqref{eq:CS}, giving smooth strictly positive $U^m$,
bounded above by $M_m$, with gradient bounds $\|\nabla U^m\| \le R_m$,
on every $[t_0, T^*]$, $t_0 > 0$ (Theorem~\ref{thm:field}). As for
Proposition~\ref{prop:multi-apriori}, $U^m$ has \emph{no} uniform-in-$x$
lower bound over all of $\R^2$ (it decays at infinity, like the field
itself), only the compact-set bound $U^m \ge \delta_{m,R}(t_0) > 0$ on
each fixed $B_R$.

\paragraph{Contraction (Theorem~\ref{thm:multi}).} On the ball $\mathcal{B}$ of radius
$\rho$ around $(U^1, \dots, U^M)$ in $X := C([t_0, T^*]; L^\infty \cap
L^2)^M$ with $\|u\|_X = \sup_t(\|u\|_{L^\infty} + \|u\|_{L^2} +
\|\nabla u\|_{L^\infty})$, the Duhamel map $\Phi^m(u) = U^m - \sum_{l
\neq m}\beta_{ml}\int_{t_0}^t \int \tilde{K}^m_{t-s}\, G^{ml}\, dy\, ds$
satisfies $\|\Phi^m(u) - U^m\|_X \le A\varepsilon$ and $\|\Phi(u) -
\Phi(v)\|_X \le B\varepsilon\|u - v\|_X$ with $\varepsilon :=
\max|\beta_{ml}|$, for constants $A, B$ depending only on the problem
parameters (via semigroup bounds on the Mehler kernel and the Lipschitz
constant of $G^{ml}$). This Lipschitz bound needs care: as the previous
paragraph noted, the factor $[(1-c^m)u^m]^{-1}$ inside
$G^{ml}$ is unbounded as $u^m \to 0$ at infinity, but it always appears
multiplying $\nabla u^m \cdot \nabla u^l$, which decays at a matching or
faster Gaussian rate (both fields inherit $U^m$'s decay on $\mathcal B$,
a ball of fixed radius around it); the product is therefore uniformly
bounded and itself Gaussian-decaying, by the same rate-matching
mechanism verified explicitly, for the analogous drift term, in the
proof below. Choosing $\rho = A\varepsilon$ and $\varepsilon <
\varepsilon^*_{\mathrm{local}} := \frac12 B^{-1}$ makes $\Phi$ a
contraction, giving a unique fixed point and hence a unique classical
solution of \eqref{eq:multi} on $[t_0, T^*]$.

\paragraph{The a priori bound (Proposition~\ref{prop:multi-apriori}),
via a corrected linearisation.} The naive way to linearise
\eqref{eq:multi} for a fixed-point argument --- freeze \emph{all} of
$(u^m, u^l, \nabla u^m, \nabla u^l)$ at their values under a candidate
$(u^m)$, and solve the resulting frozen-source linear equation for a new
candidate --- does not preserve the two-sided $L^\infty$ bound at each
step of the iteration, only at the eventual fixed point: with $\nabla
u^m$ frozen at the \emph{input's} value rather than treated as the
\emph{output's} own gradient, the cross term no longer vanishes at the
output's critical points, and the maximum principle argument below does
not apply to it. The bound is instead preserved at every step, by
keeping $\nabla u^m$ as the map's own unknown and freezing only the
\emph{other} assets $u^l$, $l \neq m$. Apply the power-law substitution
$u^m = (T^m)^{1-c^m}/(1-c^m)$ as in
Proposition~\ref{prop:powerlaw}, but now with $T^l$ ($l \neq m$) treated
as external data rather than set to zero; the cross term transforms as
\[
(T^m)^{-c^m}\left[-\beta_{ml}\,\frac{\nabla T^m \cdot \nabla
T^l}{T^m}\right]
\;=\; -\beta_{ml}\,(T^m)^{-1}\,\nabla u^m \cdot \nabla T^l,
\]
using $\nabla T^m = (T^m)^{c^m}\nabla u^m$ (verified by direct
computation: the two $c^m(T^m)^{-c^m-1}|\nabla T^m|^2$ terms from
$\Delta u^m$'s expansion cancel, just as in
Proposition~\ref{prop:powerlaw}, leaving this single term, linear in
$\nabla u^m$). Hence, for fixed $(u^l)_{l \neq m}$ (equivalently
$(T^l)_{l\neq m}$), $v^m := \Phi^m(u)$ is defined as the solution of the
\emph{linear drift-diffusion} equation
\begin{equation}\label{eq:drift-reform}
\partial_t v^m = \Delta v^m - a^m(x,t)\cdot\nabla v^m - (1-c^m)W^m(x)\,
v^m, \qquad a^m(x,t) := \sum_{l\neq m}\beta_{ml}\,[(1-c^m)u^m(x,t)]^{-1/(1-c^m)}\,\nabla T^l(x,t),
\end{equation}
with $u^m$ (not $v^m$) appearing in the drift coefficient $a^m$, so
$a^m$ is entirely determined by the input $(u^l)_{l=1}^M$ and does not
depend on the unknown $v^m$: \eqref{eq:drift-reform} is linear
in $v^m$. Since $u^m \ge \delta_m > 0$ (input constraint) and
$\|\nabla T^l\|_\infty \le R^T$ (the gradient bound on $\mathcal{C}$,
transferred from $u^l$ to $T^l$ via the bounded, monotone map
$\phi(z)=[(1-c^l)z]^{1/(1-c^l)}$ and its bounded derivative on
$[\delta_l, M_l^u]$, by the same argument as Step~3 of
Proposition~\ref{prop:w1-stability}'s proof), $a^m$ is bounded:
$\|a^m\|_\infty \le \bar a := \max_{l\neq m}|\beta_{ml}|\cdot
[(1-c^m)\delta_m]^{-1/(1-c^m)}\cdot R^T$, uniformly over $(u^l) \in
\mathcal{C}$ and over the coupling size $|\beta_{ml}|$ (the coupling
size enters only as a multiplicative prefactor on $\bar a$, never
obstructing boundedness itself).

At an interior maximum of $v^m$, $\nabla v^m = 0$, so the drift term
$a^m\cdot\nabla v^m$ vanishes there too (this is the point of keeping
$\nabla v^m$, rather than $\nabla u^m$, in the equation), and
\eqref{eq:drift-reform} gives $\partial_tv^m = \Delta v^m - 0 - W^mv^m
\le -\alpha^mv^m < 0$ at the maximum, contradicting $\partial_tv^m \ge
0$ unless the maximum is $\le \sup_xT_0^m(x)$ (transferring back from
$v^m$'s substitution variable). The componentwise minimum principle
gives strict positivity at every $(x,t)$, and the same compact-set
argument as Theorem~\ref{thm:field}'s gives a lower bound $\delta_R(t_0)$
on each fixed $B_R$ (Remark~\ref{rem:multi-decay}: no global lower bound
holds, or should be expected). This holds for \emph{any} bounded drift
$a^m$, hence for \emph{any} coupling size $|\beta_{ml}|$: the bounds of
Proposition~\ref{prop:multi-apriori} are exact, with no
coupling-dependent buffer, at every step of the iteration defined this
way --- not merely at its eventual fixed point. This is the content of
Proposition~\ref{prop:multi-apriori}; establishing existence of a
classical solution attaining it for coupling outside
Theorem~\ref{thm:multi}'s small-coupling regime is not pursued here.

Concrete numerical evaluation gives $\varepsilon^* \gtrsim 0.1$--$0.3$
for the calibration $c = 0.25$, $\alpha = 1$, $\beta = 1$, comfortably
above the empirically expected $|\beta_{ml}| = |\tanh(\gamma \cdot
\mathrm{corr}(v^m, v^l))|$ for typical equity-sector pairs. \qed

\section{Proof of Theorem~\ref{thm:kappa},
Corollaries~\ref{cor:forecast} and~\ref{cor:IV}, and
Lemma~\ref{lem:firstpassage}}
\label{app:kappa}

\paragraph{Existence.} With $\Phi_t = \exp(-\int_0^t \lambda_s\,ds)$
deterministic and continuously differentiable, $d\Phi_t/dt = -\lambda_t
\Phi_t$, hence $d(\Phi_t^{-1})/dt = \lambda_t\Phi_t^{-1}$ (ordinary
calculus; $\Phi$ carries no randomness). Apply It\^o's formula to the
single function $f(t,\kappa) := \Phi_t^{-1}\kappa$ evaluated along
$\kappa_t$. This is one application of the full time-dependent It\^o
formula, $df(t,\kappa_t) = \partial_t f\,dt + \partial_\kappa
f\,d\kappa_t + \tfrac12\partial_\kappa^2 f\,d\langle\kappa\rangle_t$,
not two separately composed rules whose order could matter: there is a
single, unambiguous expansion here, and the reason the last term
vanishes is only that $f$ is \emph{linear} in $\kappa$, so
$\partial_\kappa^2 f \equiv 0$ identically, term by term, with no
appeal to any commutativity property of derivatives or covariations.
Since $\partial_t f = \lambda_t\Phi_t^{-1}\kappa$ and $\partial_\kappa f
= \Phi_t^{-1}$,
\[
d(\Phi_t^{-1}\kappa_t) = \lambda_t\Phi_t^{-1}\kappa_t\,dt +
\Phi_t^{-1}\bigl[\lambda_t(\theta_t-\kappa_t)\,dt + \eta_t\,
dW_t^\kappa\bigr] = \lambda_t\theta_t\Phi_t^{-1}\,dt +
\eta_t\Phi_t^{-1}\,dW_t^\kappa.
\]
Integrating from $0$ to $t$ and multiplying by $\Phi_t$ gives
\eqref{eq:kappa-voc}. The stochastic integral is well defined:
$\Phi_t/\Phi_s = \exp(-\int_s^t\lambda_r\,dr)$ is continuous, hence
bounded, on the compact set $\{0\le s\le r\le t\}$, and $\eta_s$ is
locally bounded, so $\int_0^t(\Phi_t/\Phi_s)^2\eta_s^2\,ds < \infty$,
making the Wiener integral a genuine, well-defined Gaussian random
variable. Running the same computation in reverse confirms
\eqref{eq:kappa-voc} satisfies \eqref{eq:kappa-sde}, establishing
existence directly, without appeal to any general SDE
existence theorem.

\paragraph{Uniqueness.} Suppose $\kappa^{(1)}, \kappa^{(2)}$ both solve
\eqref{eq:kappa-sde} with the same initial value $\kappa_0$, driven by
the same $W^\kappa$. Their difference $\Delta_t :=
\kappa^{(1)}_t-\kappa^{(2)}_t$ satisfies
\[
d\Delta_t = \lambda_t(\theta_t-\kappa^{(1)}_t)\,dt -
\lambda_t(\theta_t-\kappa^{(2)}_t)\,dt + \eta_t\,dW_t^\kappa -
\eta_t\,dW_t^\kappa = -\lambda_t\Delta_t\,dt,
\]
since the diffusion coefficient $\eta_t$ does not depend on $\kappa_t$
at all and the drift is linear in $\kappa_t$: the two stochastic terms
cancel \emph{exactly}, term by term, leaving a deterministic linear
ODE. With $\Delta_0=0$, its unique solution is $\Delta_t \equiv 0$ for
all $t$. This is elementary calculus, not a general SDE uniqueness
theorem: additive noise together with a $\kappa$-linear drift makes the
difference of any two solutions deterministic outright, so no
Lipschitz-coefficient machinery is needed.

\paragraph{Gaussian law.} Since $\kappa_0$, $\Phi_t/\Phi_s$, $\lambda_s,
\theta_s, \eta_s$ are all deterministic, \eqref{eq:kappa-voc} expresses
$\kappa_t$ as a deterministic function of $t$ plus a Wiener integral of
a deterministic integrand against $W^\kappa$ --- precisely the
definition of a Gaussian random variable, with the stated mean and
variance; the baseline $\eta_1 = 0$ is degenerate.

\paragraph{Corollary~\ref{cor:forecast}.} On $[t, t+\Delta]$ with frozen
coefficients, $\Phi_s/\Phi_t = e^{-\lambda_t(s-t)}$, so $\kappa_s =
\theta_t + (\kappa_t - \theta_t)e^{-\lambda_t(s-t)} + \eta_t\int_t^s
e^{-\lambda_t(s-u)}\, dW_u^\kappa$. Integrating over $[t, t+\Delta]$ and
applying stochastic Fubini, $\int_t^{t+\Delta}\kappa_s\, ds$ is Gaussian
with mean $m$ and variance $s^2$ as stated. Since $W^\kappa \perp W^v$,
$\E[v_{t+\Delta} \mid \mathcal{F}_t] = \bar\theta + (v_t -
\bar\theta)\E[e^{-\int\kappa}] = \bar\theta + (v_t - \bar\theta)e^{-m +
s^2/2}$ by the Gaussian moment generating function.

\paragraph{Corollary~\ref{cor:IV}.} By Tonelli (the integrand is
non-negative) and the tower property,
\[
\E\Bigl[\int_t^{t+\Delta} v_s\, ds \,\Big|\, \mathcal{F}_t\Bigr]
= \int_0^\Delta \E[v_{t+u} \mid \mathcal{F}_t]\, du
= \int_0^\Delta \bigl(\bar\theta + (v_t - \bar\theta)\, e^{-m(u) +
s^2(u)/2}\bigr)\, du,
\]
where the inner expectation is Corollary~\ref{cor:forecast} at horizon
$u$. When $\kappa_s \equiv \kappa$ is frozen, $m(u) = \kappa u$, $s^2(u)
= 0$, and $\int_0^\Delta e^{-\kappa u}\, du = (1 -
e^{-\kappa\Delta})/\kappa$, giving the closed form. (Verified against
Monte Carlo at the Table~\ref{tab:params} calibration: analytic
$0.016807$ vs simulated $0.016801 \pm 0.000017$ over a 63-day window
from $v_t = 0.09$, $\kappa_t = 6$.)

\paragraph{Lemma~\ref{lem:firstpassage}.} Let $X_u = au + \sigma B_u$
and $\tau_{-x} := \inf\{u : X_u = -x\}$, $x > 0$. The event $\{\min_{u
\le \Delta} X_u \le -x\}$ equals $\{\tau_{-x} \le \Delta\}$ by path
continuity. The joint law of $(\min_{u \le \Delta} X_u, X_\Delta)$ for
drifted Brownian motion follows from the driftless reflection principle
and a Girsanov change of measure removing the drift (Karatzas and
Shreve, 1991, \S3.5.C): for $y \ge -x$,
\[
\Prob\bigl(\tau_{-x} \le \Delta,\ X_\Delta \in dy\bigr) =
e^{-2ax/\sigma^2}\, \frac{1}{\sigma\sqrt{2\pi\Delta}}
\exp\Bigl(-\frac{(y + 2x - a\Delta)^2}{2\sigma^2\Delta}\Bigr)\, dy.
\]
Integrating over $y \in [-x, \infty)$ gives $e^{-2ax/\sigma^2}\,
\Phi\bigl((-x + a\Delta)/(\sigma\sqrt{\Delta})\bigr)$ after the
substitution $z = y + 2x$, while the complementary event $\{X_\Delta \le
-x\} \subseteq \{\tau_{-x} \le \Delta\}$ contributes $\Phi\bigl((-x -
a\Delta)/(\sigma\sqrt{\Delta})\bigr)$; the two pieces partition
$\{\tau_{-x} \le \Delta\}$ according to $X_\Delta \ge -x$ or $X_\Delta <
-x$, yielding the stated formula. The sanity checks: at $a = 0$ the two
terms coincide and give the reflection principle
$2\Phi(-x/(\sigma\sqrt{\Delta}))$; as $x \downarrow 0$ the sum tends to
$\Phi(-a\sqrt{\Delta}/\sigma) + \Phi(a\sqrt{\Delta}/\sigma) = 1$. The
moment-matched application to the SDSV log-price
(Definition~\ref{def:gauge}), with $a = \mu -
\mathrm{IV}_t(\Delta)/(2\Delta)$ and $\sigma^2\Delta =
\mathrm{IV}_t(\Delta)$, matches the exact conditional mean and variance
of $\log(S_{t+\Delta}/S_t)$ when $\rho_{Sv} = 0$
(Dambis--Dubins--Schwarz time change conditional on the variance path,
followed by freezing the time change at its conditional mean); its
approximation quality for the running-minimum law is assessed
empirically in Section~\ref{sec:panel-e}. \qed

\section{Proof of Theorem~\ref{thm:wellposed}}
\label{app:wellposed}

\paragraph{Well-posedness of $\kappa_t$.} By
Theorem~\ref{thm:field}(iv), $T(\cdot,t)$ has Gaussian decay, so the
radial integrals defining $\lambda_t, \theta_t, \eta_t$ are finite; the
closed-form Mehler representation makes them locally Lipschitz and
locally bounded. The variation-of-constants formula \eqref{eq:kappa-voc}
(Appendix~\ref{app:kappa}) gives the unique strong solution.

\paragraph{Well-posedness and boundary behaviour of $v_t$.} Given the
locally bounded, $\mathcal{F}_t$-adapted process $\kappa_t$, the
variance SDE is a time-inhomogeneous CIR diffusion. For locally bounded
coefficients with $\bar\theta > 0$, $\xi > 0$, Yamada--Watanabe and
Engelbert--Schmidt theory (Ikeda and Watanabe, 1989) yields a unique
strong solution with $v_t \ge 0$ almost surely. For strict positivity:
on $\{\kappa_t \ge \kappa_* = \xi^2/(2\bar\theta)\}$, the comparison
theorem for one-dimensional diffusions gives $v_t \ge \underline{v}_t$
pathwise, where $\underline{v}_t$ is a constant-coefficient CIR process
with parameters $(\kappa_*, \bar\theta, \xi)$ satisfying the Feller
condition $2\kappa_*\bar\theta = \xi^2$, whose boundary at $0$ is
unattainable. Without a floor, the Gaussian $\kappa_t$ (extension) drops
below $\kappa_*$ with positive probability and the boundary is
attainable, so only non-negativity holds. Since $\kappa_t$ depends only
on $W^\kappa$ and the deterministic sentiment dynamics, it feeds the
$v$-equation non-anticipatively; the pair $(v_t, \kappa_t)$ is uniquely
determined. \qed

\section{Monte Carlo Convergence for Option Pricing}
\label{app:montecarlo}

\begin{proposition}\label{prop:mc2}
Let $\hat{C}_N = N^{-1}\sum_{i=1}^N e^{-rT}(S_T^{(i)} - K)^+$ be the
plain Monte Carlo estimator. Then $\hat{C}_N \to C(0)$ almost surely,
and if $\E^{\Q}[S_T^2] < \infty$ then
$\sqrt{N}(\hat{C}_N - C(0)) \xrightarrow{d} \mathcal{N}(0,
\mathrm{Var}(e^{-rT}(S_T - K)^+))$.
\end{proposition}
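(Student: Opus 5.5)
The plan is to treat $\hat C_N$ as a sample mean of i.i.d.\ copies of the single random variable $Y := e^{-rT}(S_T - K)^+$ under $\Q$, and then apply the classical strong law of large numbers and the Lindeberg--L\'evy central limit theorem. The work lies in checking the hypotheses of these two theorems.

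\emph{Step 1 (i.i.d.\ structure).} By Theorem~\ref{thm:price} and Proposition~\ref{prop:Q-dynamics}, the system $(S,v,\kappa)$ has a unique strong solution under $\Q$. Hence $S_T = F(W^{S,\Q}, W^{v,\Q}, W^{\kappa,\Q}, T(\cdot,\cdot))$ for a measurable functional $F$ of the driving Brownian paths, the sentiment field being deterministic given the observed stream. Independent Monte Carlo replications use independent copies of the Brownian triple with the same fixed field, so the $S_T^{(i)}$ are i.i.d.\ with the $\Q$-law of $S_T$. The same holds for the $Y_i$, since $Y$ is a continuous function of $S_T$. I will state explicitly that the estimator refers to exact sampling of $S_T$; discretisation bias from the full-truncation Euler scheme is a separate issue, outside this statement.

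\emph{Step 2 (SLLN).} Since $0 \le Y \le e^{-rT}S_T$, integrability reduces to $\E^{\Q}[S_T] < \infty$. This follows from Proposition~\ref{prop:martingale}: $e^{-rt}S_t$ is a true $\Q$-martingale, so $\E^{\Q}[S_T] = e^{rT}S_0$. Kolmogorov's strong law then gives $\hat C_N \to \E^{\Q}[Y] = C(0)$ almost surely.

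\emph{Step 3 (CLT).} If $\E^{\Q}[S_T^2]<\infty$, then $Y^2 \le e^{-2rT}S_T^2$ gives $\mathrm{Var}(Y) < \infty$, and Lindeberg--L\'evy yields $\sqrt N(\hat C_N - C(0)) \Rightarrow \mathcal N(0,\mathrm{Var}(Y))$. If $\mathrm{Var}(Y)=0$ the limit is a point mass, which is consistent with the statement.

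\emph{Main obstacle.} Step 2 is the only delicate point. It relies on the martingale property, which Proposition~\ref{prop:martingale} supplies only under Assumption~\ref{ass:mpr} with $\rho_{Sv}\le 0$. I will state that the result is proved under these standing hypotheses, which is where $C(0)$ is defined in the first place. The second-moment condition in Step 3 is a hypothesis here, not something to verify. Verifying it at the calibration of Table~\ref{tab:params} would be a separate remark: condition on $\mathcal G$ as in the proof of Proposition~\ref{prop:martingale}, and apply the Andersen--Piterbarg moment-explosion criterion for order $2$ pathwise. The argument uses the floor $\kappa^{\Q}_t \ge \kappa_*$, because a larger reversion speed only postpones explosion, and $\rho_{Sv}<0$, because moment order $2$ with negative correlation does not explode on short horizons. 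Integrating the conditional bound then gives the unconditional one.
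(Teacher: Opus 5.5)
Your proposal is correct and follows the paper's argument. The domination $0\le e^{-rT}(S_T-K)^+\le e^{-rT}S_T$, with integrability taken from Proposition~\ref{prop:martingale}, gives the strong law; the hypothesis $\E^{\Q}[S_T^2]<\infty$ gives the classical CLT; and the Andersen--Piterbarg criterion enters only as a side check at the calibration of Table~\ref{tab:params}. Your ``main obstacle'' is milder than you suggest, since $e^{-rt}S_t$ is a nonnegative $\Q$-local martingale and hence a supermartingale, so $\E^{\Q}[S_T]\le e^{rT}S_0$ already suffices for the strong law without the true martingale property.
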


\begin{proof}
$e^{-rT}(S_T - K)^+ \le e^{-rT}S_T$, which is integrable by
Proposition~\ref{prop:martingale}, giving the strong law. Square
integrability is not automatic: by Andersen and Piterbarg (2007,
Prop.~3.1) the second moment of a Heston-type price explodes at a finite
time $T^\ast$ unless their case $D \ge 0$, $a < 0$ obtains, in which
case $T^\ast = \infty$. That case holds at the calibration of
Table~\ref{tab:params}, and whenever it holds $\E^{\Q}[S_T^2] < \infty$
and the CLT follows by the classical Monte Carlo central limit theorem
(Glasserman, 2003, Appendix A.2).
\end{proof}

\section{Existence and Uniqueness of the Asset-Price SDE}
\label{app:price}

Consider $dX_t = b(t, X_t)\, dt + a(t, X_t)\, dW_t$, $X_0 = x_0$, with
$b, a$ jointly measurable and progressively measurable, satisfying: (L)
local Lipschitz, meaning for every $R > 0$ there is $L_R$ with $|b(t, x)
- b(t, y)| + |a(t, x) - a(t, y)| \le L_R|x - y|$ for $x, y \in [-R, R]$;
and (G) linear growth, $|b(t, x)| + |a(t, x)| \le K(1 + |x|)$.

\begin{theorem}[Protter, 2005, Chapter V, \S3]\label{thm:protter}
Under (L) and (G), the SDE admits a unique strong solution $(X_t)_{t \ge
0}$ that is $\mathcal{F}_t$-adapted, continuous, satisfies
$\E[\sup_{s \le t}|X_s|^2] < \infty$, and is pathwise unique.
\end{theorem}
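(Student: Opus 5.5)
The statement to prove is Theorem~\ref{thm:protter}: existence and uniqueness of strong solutions under local Lipschitz continuity (L) and linear growth (G). I will follow the standard localisation route: first prove the globally Lipschitz case by Picard iteration, then pass to the local case by truncation and stopping times, using (G) to rule out explosion.

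\emph{Step 1 (global Lipschitz case).} Assume $L_R \equiv L$ does not depend on $R$. Define Picard iterates
\[
X^{(n+1)}_t = x_0 + \int_0^t b(s,X^{(n)}_s)\,ds + \int_0^t a(s,X^{(n)}_s)\,dW_s .
\]
Combining Doob's $L^2$ maximal inequality for the stochastic integral with Cauchy--Schwarz for the $ds$-integral gives
\[
\E\sup_{s\le t}|X^{(n+1)}_s - X^{(n)}_s|^2 \le C_T \int_0^t \E\sup_{r\le s}|X^{(n)}_r - X^{(n-1)}_r|^2\,ds .
\]
Iterating this yields a bound $(C_T t)^n/n!$. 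Borel--Cantelli then gives almost-sure uniform convergence on $[0,T]$ to a continuous adapted limit, and the limit solves the SDE. For uniqueness, apply the same estimate to the difference of two solutions and conclude with Gr\"onwall.

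\emph{Step 2 (localisation).} For each $R$, truncate the coefficients: set $b_R(t,x) := b(t,\pi_R x)$ and $a_R(t,x) := a(t,\pi_R x)$, where $\pi_R$ is the projection onto $[-R,R]$. These truncated coefficients are globally Lipschitz with constant $L_R$, so Step~1 gives a unique solution $X^R$. Let $\tau_R := \inf\{t : |X^R_t| \ge R\}$.

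Pathwise uniqueness for the truncated equations shows consistency: $X^R = X^{R'}$ on $[0,\tau_R]$ whenever $R < R'$. To see this, compare the two solutions up to $\tau_R \wedge \tau_{R'}$. There both are confined to the region where the coefficients agree, so the Gr\"onwall argument applies to the stopped difference. This consistency lets me define $X$ on $[0,\sup_R \tau_R)$. The same stopped argument also gives uniqueness of any solution up to each $\tau_R$.

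\emph{Step 3 (non-explosion and moment bound).} This is the main obstacle. It is the only place where (G) is used, and the estimate must be uniform in $R$. Apply It\^o's formula (or the same Doob/BDG estimate as in Step~1) to the stopped process $X_{t\wedge\tau_R}$, and use (G):
\[
\E\sup_{s\le t}|X_{s\wedge\tau_R}|^2 \le 3|x_0|^2 + C_T K^2 \int_0^t \bigl(1 + \E\sup_{r\le s}|X_{r\wedge\tau_R}|^2\bigr)\,ds .
\]
Gr\"onwall then gives a bound $M_T$ that does not depend on $R$. Chebyshev's inequality yields
\[
\Prob(\tau_R \le T) \le M_T / R^2 \to 0 ,
\]
so $\tau_R \uparrow \infty$ almost surely. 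Hence $X$ is defined on all of $[0,\infty)$. Fatou's lemma then gives $\E\sup_{s\le t}|X_s|^2 \le M_t < \infty$.

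Continuity and adaptedness pass to the limit because $X$ coincides with $X^R$ on each $[0,\tau_R]$. Pathwise uniqueness on $[0,\infty)$ follows from Step~2 by letting $R\to\infty$.
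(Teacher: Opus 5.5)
Your proof is correct. It is the classical diffusion argument in three parts:
\begin{itemize}
\item Picard iteration, with Doob's inequality and a factorial bound, for globally Lipschitz coefficients.
\item Truncation by $\pi_R$ for the local case, with consistency coming from pathwise uniqueness.
\item A Gr\"onwall estimate uniform in $R$, coming from (G), which excludes explosion and gives the $L^2$ bound on the running supremum.
\end{itemize}

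The paper gives no argument of its own; it imports the result from Protter. Protter's Chapter~V treats equations driven by an arbitrary semimartingale with functional-Lipschitz coefficients, where the Lipschitz constant may be an increasing random process. It proves existence and uniqueness by a contraction estimate in semimartingale norms on random intervals where the driver is small, and then pastes these together. That framework buys generality the paper relies on elsewhere: jump drivers for the exponential L\'evy price model, and random coefficients.

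Your route buys a short, self-contained proof. The two conclusions beyond bare existence and uniqueness are global existence under merely local Lipschitz coefficients and finiteness of $\E[\sup_{s\le t}|X_s|^2]$. Both come out of one Gr\"onwall bound. That bound also makes visible that the growth constant $K$ in (G) must be deterministic and uniform in $t$.

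This matters for how the paper uses the theorem, with $a(t,x) = \sqrt{v_t}\,\sigma(x,t)$. Here the growth factor $\sqrt{v_t}$ is random and unbounded. Your Steps~1--2 and non-explosion survive if you additionally stop when $v$ first exceeds a level $N$. The expectation bound of Step~3 does not survive. This agrees with the paper's own remark in Appendix~\ref{app:montecarlo} that second moments of Heston-type prices can explode in finite time.

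Two small corrections. First, (G) is not used only in Step~3. Condition (L) says nothing about the size of $b(t,0)$ or $a(t,0)$. It is (G) that bounds the truncated coefficients by $K(1+R)$ and makes $\E\sup_{s\le T}|X^{(1)}_s - x_0|^2$ finite, and that constant must multiply your $(C_T t)^n/n!$. Second, the uniqueness part of Step~1 needs stopping, so that the quantities being bounded are finite a priori; your Step~2 provides this. The same applies to the Gr\"onwall step in Step~3, where you should also take $R > |x_0|$.
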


The asset-price SDE $dS_t = \mu(S_t, t)\, dt + \sqrt{v_t}\,\sigma(S_t,
t)\, dW_t$ has $b(t, x) = \mu(x, t)$ and $a(t, x) =
\sqrt{v_t}\,\sigma(x, t)$. By Theorem~\ref{thm:wellposed}, $v_t \ge 0$
a.s.\ and $\sqrt{v_t}$ is $\mathcal{F}_t$-adapted and locally bounded;
assuming $\mu, \sigma$ satisfy (L) and (G), the composite coefficients
inherit local Lipschitz and linear growth, so Theorem~\ref{thm:protter}
applies.

\section{Additional Simulation Evidence}
\label{app:robustness}

This appendix reports the one-at-a-time robustness study summarised in
Section~\ref{sec:exhibit3}. A single calibration cannot separate a
structural effect from an artifact of the chosen parameters.
Table~\ref{tab:robustness} varies each parameter one at a time from the
baseline of Table~\ref{tab:params} and reports three post-shock
statistics: the day-63 expected-volatility gap, the day-63 ATM
mispricing, and the regime ATM spread.

\begin{table}[ht]
\centering
\begin{tabular}{lccc}
\toprule
Configuration & $|\mathrm{gap}_{63}|$ (pp) & $|\mathrm{mis}_{63}|$ (\%)
& regime spread (\%) \\
\midrule
baseline & 3.37 & 7.8 & 8.6 \\
$\xi = 0.20$ & 3.52 & 7.9 & 8.7 \\
$\xi = 0.40$ & 3.12 & 7.7 & 8.3 \\
$\rho_{Sv} = -0.30$ & 3.37 & 7.8 & 8.5 \\
$\rho_{Sv} = -0.90$ & 3.37 & 7.8 & 8.6 \\
$\bar\theta = 0.06$ & 1.72 & 4.2 & 3.5 \\
$\bar\theta = 0.09$ & 0.27 & 0.3 & n/a \\
$\kappa_{\mathrm{shock}} = 4$ & 2.00 & 4.3 & 8.6 \\
$\kappa_{\mathrm{shock}} = 8$ & 4.28 & 10.5 & 8.6 \\
$\kappa_{\mathrm{shock}} = 10$ & 4.88 & 12.7 & 8.6 \\
$\lambda = 0.5$ & 3.49 & 8.0 & 8.6 \\
$\lambda = 2.0$ & 3.13 & 7.4 & 8.6 \\
\bottomrule
\end{tabular}
\caption{Robustness of the post-shock effects to one-at-a-time parameter
variation, from 40{,}000-path Monte Carlo at a fixed seed. Effects are
stable to vol-of-vol $\xi$, leverage $\rho_{Sv}$, and reversion speed
$\lambda$; they scale monotonically with shock size
$\kappa_{\mathrm{shock}}$; and they vanish as the initial variance
approaches its long-run level ($\bar\theta \to v_0 = 0.09$), where
$\kappa$ has nothing to revert and the model reduces to Heston. The
regime spread is computed at a fixed elevated reference variance and is
not comparable when $\bar\theta$ is raised above it, so it is omitted
for $\bar\theta = 0.09$.}
\label{tab:robustness}
\end{table}

First, invariance where it should hold: the volatility gap and
mispricing are essentially unchanged across vol-of-vol ($\xi \in \{0.20,
0.40\}$), leverage ($\rho_{Sv} \in \{-0.30, -0.90\}$), and
$\kappa$-reversion speed ($\lambda \in \{0.5, 2.0\}$), the parameters
that should not drive an effect whose source is the sentiment-driven
reversion channel. Second, scaling with the mechanism: the mispricing
rises from 4.3\% to 12.7\% as the shock $\kappa_{\mathrm{shock}}$ grows
from 4 to 10; the effect tracks the quantity that is supposed to cause
it. Third, correct degeneracy: at $\bar\theta = 0.09 = v_0$ the gap
collapses to 0.27 points and the mispricing to 0.3\%, because when
current variance already sits at its long-run level there is nothing for
a state-dependent $\kappa$ to act on, which is exactly the boundary at
which SDSV reduces to Heston.

The incremental value of observing $\kappa_t$ likewise scales with how
informative sentiment is. Table~\ref{tab:cov} reports the day-63
stationary forecast increment as a function of the coefficient of
variation of $\kappa_t$: it is negligible when sentiment barely moves
and reaches double digits only once $\kappa_t$ varies materially,
recovering the order of magnitude of the post-shock regime as
$\mathrm{CoV}(\kappa_t)$ approaches 0.6. For reference, the post-shock
increments at 5, 10, 21, and 63 days are 4.8\%, 8.8\%, 16.7\%, and
43.5\%; the corresponding stationary increments in an informative regime
($\mathrm{CoV} \approx 0.6$) are 1.5\%, 3.3\%, and 21.7\% at 5, 10, and
63 days.

\begin{table}[ht]
\centering
\begin{tabular}{ccc}
\toprule
$(\lambda, \eta)$ & $\mathrm{CoV}(\kappa_t)$ & 63-day increment (\%) \\
\midrule
$(1.0, 0.4)$ & 0.12 & 0.1 \\
$(1.0, 0.8)$ & 0.25 & 0.7 \\
$(1.0, 1.5)$ & 0.47 & 5.2 \\
$(0.5, 1.5)$ & 0.58 & 21.7 \\
\bottomrule
\end{tabular}
\caption{Stationary 63-day forecast increment $1 -
\mathrm{MSE}_{\mathrm{SDSV}}/\mathrm{MSE}_{\mathrm{Heston}}$ as a
function of the variability of $\kappa_t$. The information the reversion
speed carries about future variance grows with how far $\kappa_t$
ranges; when sentiment is quiet the second factor adds almost nothing
over the nested benchmark.}
\label{tab:cov}
\end{table}

\begin{figure}[ht]
\centering
\sdsvfigure{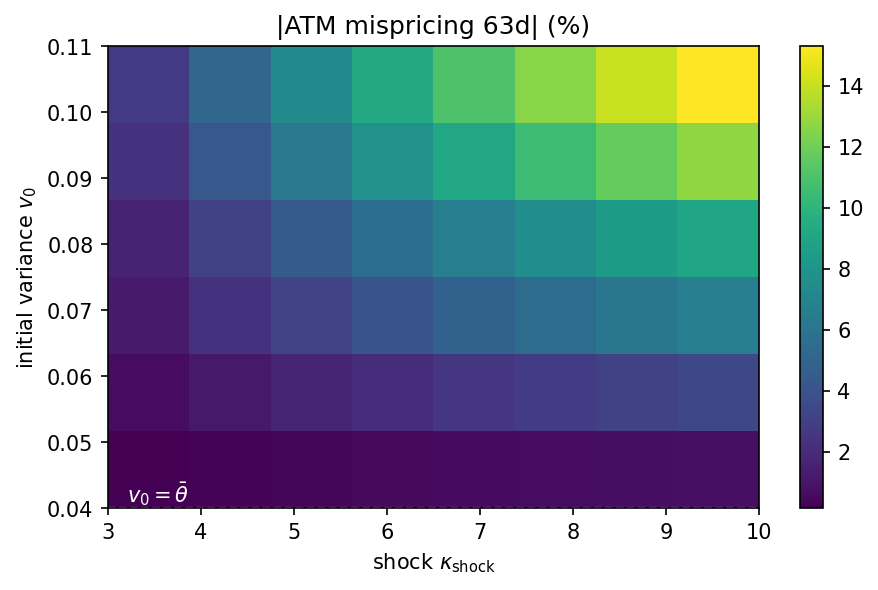}
\caption{\textbf{Interaction of shock size and variance displacement.}
Day-63 ATM mispricing over $\kappa_{\mathrm{shock}}$ and initial
variance $v_0$ (with $\bar\theta = 0.04$), from 40{,}000-path Monte
Carlo with common random numbers across cells. The effect is largest
where both the shock and the displacement $v_0 - \bar\theta$ are large,
reaching $16.3\%$ at the top right, and nearly vanishes along $v_0 =
\bar\theta$ (dashed line), where it stays below $1\%$: with the
expected-variance paths of the two models coinciding there, what
remains is vol-of-vol convexity, not the reversion channel. The
one-at-a-time variation in Table~\ref{tab:robustness} is therefore not
hiding an interaction.}
\label{fig:exhibit3}
\end{figure}

\end{document}